\documentclass[letterpaper]{article}

\usepackage[english]{babel}
\usepackage{csquotes}
\usepackage{amsmath,amssymb,amsthm,mathtools}
\usepackage[shortlabels]{enumitem}
\usepackage{microtype}
\usepackage[dvipsnames,x11names]{xcolor}
\usepackage[hypertexnames=false]{hyperref}
\usepackage{modernfrontmatter}

\definecolor{HmDavide}{RGB}{139,0,0}

\hypersetup{
    colorlinks,
    linkcolor={red!50!black},
    citecolor={blue!50!black},
    urlcolor={MJAccent}
}

\numberwithin{equation}{section}

\newtheorem{theorem}{Theorem}[section]
\newtheorem{proposition}[theorem]{Proposition}
\newtheorem{lemma}[theorem]{Lemma}
\newtheorem{corollary}[theorem]{Corollary}

\theoremstyle{remark}
\newtheorem{remark}[theorem]{Remark}

\theoremstyle{definition}
\newtheorem{definition}[theorem]{Definition}
\newtheorem{example}[theorem]{Example}

\theoremstyle{plain}
\newtheorem{mainthm}{Theorem}

\newcommand{\R}{\mathbb R}
\newcommand{\N}{\mathbb N}
\newcommand{\Om}{\Omega}
\newcommand{\one}{\mathbf 1}
\newcommand{\dd}{\,\mathrm d}
\newcommand{\ellinfty}{\ell^\infty(X)}
\newcommand{\LG}{\hyperref[eq:LG]{\textup{(LG)}}}
\newcommand{\MB}{\hyperref[eq:nonlinear-generalized-mass]{\textup{(MB)}}}
\newcommand{\SC}{\hyperref[def:SC]{\textup{(SC)}}}
\newcommand{\SCinf}{\hyperref[def:SC-infinity]{\textup{{(SC\textsubscript{$\infty$})}}}}
\DeclareMathOperator{\dom}{dom}
\newcommand{\id}{\operatorname{id}}

\usepackage[style=numeric,backend=biber,giveninits=true,maxbibnames=99,sortcites=true]{biblatex}
\allowdisplaybreaks

\title{\texorpdfstring{Nonlinear parabolic characterizations of stochastic completeness at infinity on weighted graphs}{Nonlinear parabolic characterizations of stochastic completeness at infinity on weighted graphs}}
\articlelabel{}
\paperstatus{Preprint}
\runningtitle{Nonlinear parabolic characterizations of stochastic completeness at infinity on graphs}
\runningauthors{Bianchi \textperiodcentered\ Hua \textperiodcentered\ Setti \textperiodcentered\ Wojciechowski}
\date{}

\paperauthor[1]{Davide Bianchi}
\paperauthor[2]{Bobo Hua}
\paperauthor[3]{Alberto G. Setti}
\paperauthor[4,5]{Rados{\l}aw K. Wojciechowski}

\paperaffiliation[1]{School of Mathematics (Zhuhai), Sun Yat-sen University, Zhuhai 518055, China}
\paperaffiliation[2]{School of Mathematical Sciences, LMNS, Fudan University, Shanghai 200433, China}
\paperaffiliation[3]{Dipartimento di Scienze e Alta Tecnologia, Universit\`a dell'Insubria, Como 22100, Italy}
\paperaffiliation[4]{Department of Mathematics and Computer Science, York College -- CUNY, Jamaica, USA}
\paperaffiliation[5]{Department of Mathematics, Graduate Center -- CUNY, New York, USA}

\authoremail[D.B.]{bianchid@mail.sysu.edu.cn}
\authoremail[B.H.]{bobohua@fudan.edu.cn}
\authoremail[A.G.S.]{alberto.setti@uninsubria.it}
\authoremail[R.K.W.]{rwojciechowski@gc.cuny.edu}

\keywords{stochastic completeness\and generalized porous medium equation\and filtration equation\and weighted graphs\and graph Laplacians}
\subjclass[2020]{35K55, 35R02, 05C63, 60J27}

\begin{document}
\maketitle
\begin{abstract}
We prove a nonlinear parabolic characterization of stochastic completeness at infinity for weighted graphs.  For the filtration equation
\[
    (\partial_t  + \Delta \Phi)u =0
\]
where $\Delta$ is the non-negative formal graph Laplacian and $\Phi u =\phi \circ u$ with $\phi \colon \R\to\R$ nonconstant, continuous and increasing, stochastic completeness at infinity
is equivalent to uniqueness of bounded pointwise solutions for every bounded initial datum. For \(\Phi=\id\), this recovers the classical heat equation
characterization of stochastic completeness at infinity, and of
stochastic completeness when the killing term is trivial, i.e., when \(\kappa=0\).
If stochastic completeness at infinity fails, then every bounded initial datum admits infinitely many bounded pointwise solutions of the filtration equation.  Admissible nonlinearities include the signed porous medium and fast diffusion powers $\phi(s)=s|s|^{m-1}$ for all $m>0$, as well as many others.

Stochastic completeness at infinity is further characterized by a generalized mass balance: the total mass of a positive pointwise solution at time $t$, augmented by the mass $\int_0^t\sum_{x}\kappa(x)\phi(u(s,x)) \dd s$ dissipated by the killing term $\kappa$, equals the initial mass.  This balance holds for every bounded positive solution on graphs of finite measure and for bounded finite-mass data on graphs of arbitrary measure under the sharp condition $\limsup_{r\to0^+}\phi(r)/r<\infty$.  It also extends to positive pointwise solutions
in $\ell^1$ that are bounded on every positive time interval.
When the killing term is trivial, stochastic completeness at infinity reduces to stochastic completeness and generalized balance to conservation of mass.
\end{abstract}

\section{Introduction}

Let $(M, g)$ be a complete, noncompact Riemannian manifold without boundary. The natural diffusion process on $M$ is Brownian motion, a continuous Markov process $\{X_t\}_{t \ge 0}$ generated by the Laplace--Beltrami operator $\Delta_M:=-\mathrm{div}(\mathrm{grad}).$ A fundamental question is whether the process $X_t$ remains in $M$ for all time or escapes to the ``ideal boundary'' at infinity in finite time. Formally, let $\zeta$ denote the lifetime (or explosion time) of the Brownian motion. A manifold $M$ is called stochastically complete if $\mathbb{P}_x(\zeta = \infty) = 1$ for every starting point $x \in M$. Analytically, this probabilistic property translates directly to the conservation of mass for the heat semigroup, i.e.,
$$e^{-t\Delta_M}1 (x)=\int_M p(t, x, y) \, \dd\mathrm{vol}(y)= 1 \quad \text{for all }  t>0 \text{ and } x\in M$$
where $p(t, x, y)$ denotes the minimal heat kernel, namely, the transition probability density of Brownian motion. With a slight abuse of notation, we use $1$ to denote both the scalar and the constant function equal to $1$.

The theory of stochastic completeness on Riemannian manifolds is an active area of research~\cite{GIM,masamune2020generalized,KuwaeLi2022,BisterzoMarini2023,GangulyPinchoverRoychowdhury2024,GIMP} that integrates probabilistic, analytic, and geometric perspectives, see, e.g., \cite{Gaffney1959,Grigoryan1986,Davies1992,KarpLi,Grigoryan1999,Grigoryan2009}.  In particular, its relationship with weak maximum principles at infinity was established and systematically developed by Pigola, Rigoli, and Setti~\cite{PigolaRigoliSetti2003,PigolaRigoliSetti2005}.

For a Riemannian manifold with a potential $V \colon M\to[0,\infty)$, Masamune and Schmidt~\cite{masamune2020generalized} developed a generalized conservation property for the heat semigroup generated by the Schr\"odinger operator, more specifically,
$$e^{-t(\Delta_M+V)}1+\int_0^te^{-s(\Delta_M+V)}V\dd s=1 \quad \text{for all }  t>0.$$
This definition was motivated by work done in the graph setting by Keller and Lenz \cite{KL12},
where this phenomenon is called stochastic completeness at infinity and
which distinguishes loss of heat through killing (or potential) from escape to infinity.

While stochastic completeness is classically formulated for the linear heat equation, nonlinear characterizations on Riemannian manifolds were established by Grillo, Ishige, and Muratori~\cite{GIM} and, more recently, by Grillo, Ishige, Muratori, and Punzo~\cite{GIMP}.
The former proved that stochastic completeness is equivalent both to uniqueness of bounded positive solutions of filtration equations with concave fast diffusion-type nonlinearities and to a corresponding semilinear Liouville property.  The latter removed the concavity, strict-monotonicity, and sign restrictions: for every continuous, increasing, nonconstant nonlinearity, every bounded initial datum, and every time horizon, stochastic incompleteness is equivalent to the existence of infinitely many bounded very weak solutions~\cite[Theorem~1.7]{GIMP}.  The corresponding elliptic characterizations are given in~\cite[Theorems~1.1 and~1.4]{GIMP}.

In the graph setting, stochastic completeness has been extensively studied since the works~\cite{Wojciechowski2008,Weber2010}; see also the survey~\cite{Wojciechowski2021Survey}
and~\cite{AS23, KMW25} for more recent developments.
The extension to stochastic completeness at infinity was carried out in \cite{KL12}; see also the book \cite{KLW21}.
We now recall the setting of weighted graphs. Let $G=(X,w,\kappa,\mu)$ be a weighted graph in the standard framework of discrete Dirichlet spaces~\cite{KLW21}, where $X$ is a countable set of vertices; $\mu \colon X\to(0,\infty)$ is the vertex measure; the edge weight $w \colon X\times X\to[0,\infty)$ is symmetric, $w(x,x)=0$, and satisfies
\[
    \sum_{y\in X} w(x,y)<\infty\qquad\text{for every }x\in X;
\]
and $\kappa\colon X\to[0,\infty)$ is the killing term. The formal
Laplacian is defined for $f \colon X\to \R$ and $x \in X$ by
\[
    \Delta f(x)=\frac{1}{\mu(x)}\sum_{y\in X}w(x,y)(f(x)-f(y))+\frac{\kappa(x)}{\mu(x)}f(x)
\]
where $f$ is chosen so that the sum is absolutely convergent. Note that the killing term plays the role of the potential of a Schr\"odinger operator.
Let $L$ denote the positive self-adjoint Markovian realization of the formal Laplacian $\Delta$ in $\ell^2(X,\mu)$ whose semigroup $e^{-tL}$ generates the minimal positive solution of the heat
equation.

We introduce the definition of stochastic completeness at infinity as introduced by \cite{KL12}.
\begin{definition}[Stochastic completeness at infinity]\label{def:SC-infinity}
A graph $G=(X,w,\kappa,\mu)$ satisfies \emph{stochastic completeness at infinity} \SCinf{} if for some (and hence all) $t>0,$
$$e^{-tL}1+\int_0^t e^{-sL}\left(\frac{\kappa}{\mu}\right)\dd s=1.$$
When $\kappa=0$ and the above holds, then $G$ is said to  satisfy \emph{stochastic completeness} \SC{}.\label{def:SC}
Otherwise, a graph is said to be \emph{stochastically incomplete (at infinity)}.
\end{definition}

Note that the heat equation on graphs is related to the continuous-time random walk
and $G$ satisfies  \SCinf{}
when the minimal heat semigroup loses no
heat at infinity, so that any dissipated heat is accounted for entirely by
the killing term $\kappa.$
Equivalent resolvent, bounded elliptic, heat equation, and
Omori--Yau characterizations are collected in
Proposition~\ref{prop:SCinf-characterizations} below.

The main contributions of \cite{GIM,GIMP} consist of nonlinear elliptic and parabolic characterizations of stochastic completeness on Riemannian manifolds.
On graphs, Schmidt and Zimmermann~\cite{SZ} established a discrete analog of the nonlinear elliptic characterization from \cite{GIMP}, as recalled below in Theorem~\ref{thm:S-Z-liouville},
and mentioned a nonlinear parabolic characterization as an open problem.
In this paper, we prove such a nonlinear parabolic characterization of \SCinf{} on graphs.

We consider the class of nonlinearities
\[
\mathcal I
:=
\left\{
\phi\colon\R\to\R
\;\middle|\;
\phi\text{ is continuous, increasing, and nonconstant, with }\phi(0)=0
\right\}
\]
where increasing is always understood in the weak sense, that is,
\(\phi(s_1)\leq\phi(s_2)\) whenever \(s_1 < s_2\). For any $u \colon X\to\R$ and $\phi\in\mathcal{I},$ we set
\[
    \Phi u:=\phi\circ u.
\]

The Cauchy problem of the generalized porous medium equation (GPME), or filtration equation,
for an initial datum $u_0 \colon X\to\R,$
is given as follows
\begin{equation}\tag{GPME}\label{eq:GPME}
	\begin{cases}
		(\partial_t +\Delta\Phi)u(t,x) = 0
		& \text{for all } (t,x) \in (0,T)\times X,\\
		u|_{t=0} = u_0.
		&
	\end{cases}
\end{equation}

The signed power example is
\[
    \phi(s)=s^m:=s|s|^{m-1}\qquad \text{for }m>0.
\]
Then $m=1$ gives the heat equation, $m>1$ the porous medium range, and $0<m<1$ the fast diffusion range; see, for instance, V\'azquez's monograph~\cite{vazquez2007porous} and the graph setting developed in~\cite{bianchi2022generalized,bianchi2026bounded}. The nonlinearity also includes the Stefan problems, i.e., $\phi(s)=(s-a)_+$ for $a>0$, see, e.g., \cite{vazquez2007porous}.

Over Euclidean
domains, subsequent work has concentrated on the regularity
of solutions, on their short- and long-time asymptotics, and on singular
and extinction phenomena for the associated porous medium and fast
diffusion
flows~\cite{jin2022singular,jin2026extinction,grillo2026rough,bonforte2010veryfast}.
These questions have also been studied in the Riemannian
setting, where smoothing effects, Aronson--B\'enilan type gradient bounds,
and the behavior of solutions are governed by the
underlying
geometry~\cite{grillo2016cartanhadamard,wang2021gradient,fujitani2024aronson,grillo2025green,bianchi2018laplacian}.
More recently, and closer in spirit to the present work, filtration and
porous medium-type equations have begun to be studied on metric measure
spaces and on discrete
structures~\cite{Ma2022,bianchi2022generalized, bianchi2026bounded,kou2026porous,berchio2026semilinear,berchio2026fractional}.

On graphs, nonlinear parabolic problems have been developed along several
complementary lines. The parabolic discrete $p$-Laplacian and
Leray--Lions evolutions were treated within nonlinear semigroup
theory~\cite{Mugnolo2013DiscretePLaplacian,HuaMugnolo15,MazonSoleraToledo2020LerayLions},
while a variational framework for nonlinear elliptic equations, including
Yamabe- and Kazdan--Warner-type problems, was introduced in~\cite{GrigoryanLinYang2016Yamabe,GrigoryanLinYang2016KazdanWarner}.
For semilinear reaction--diffusion
equations $(\partial_t  + \Delta) u = f(u)$, blow-up, global existence,
and Fujita-type critical phenomena have been established for power and
convex
sources~\cite{LinWu2017Semilinear,Wu2021Fujita,LenzSchmidtZimmermann2023}, alongside more recent existence, lifespan, and nonexistence results for
general, time-dependent, or dissipative sources on infinite
graphs~\cite{GrilloMeglioliPunzo2026,PunzoSacco2026,MonticelliPunzoSomaglia2026,berchio2026semilinear}.

We next state one of our main results, which follows from Theorems~\ref{thm:killing-unique}~and~\ref{thm:killing-nonlinear-nonunique}. This provides a nonlinear parabolic characterization of \SCinf{} on graphs, extending the result in \cite{GIMP} to the discrete setting and even including the killing term.
We note that, by results found in \cite{bianchi2026bounded}, there
always exists a bounded pointwise solution to the filtration equation
given a bounded initial datum. The result below
says that stochastic completeness at infinity is equivalent to the uniqueness
of this solution.
Furthermore, we note that, in the heat equation case, i.e., when $\phi(s)=s$,
this is one of the known formulations of stochastic completeness. Hence, this result says
that uniqueness
of bounded solutions for $\phi(s)=s$ implies uniqueness for all $\phi \in \mathcal{I}$.
\begin{mainthm}[see Theorem~\ref{thm:killing}]\label{thm:mainA}
A graph \(G\) satisfies \SCinf{} if and only if the \ref{eq:GPME} has a unique bounded pointwise solution, i.e., for some (equivalently, all) $\phi\in\mathcal{I}$ and some (equivalently, all) $u_0\in\ell^\infty(X),$ any two bounded pointwise solutions of the \ref{eq:GPME} with initial datum $u_0$ coincide.
\end{mainthm}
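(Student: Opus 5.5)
We prove the two implications separately. Together with the existence of bounded pointwise solutions from \cite{bianchi2026bounded}, they give the equivalence for every $\phi\in\mathcal I$ and every $u_0\in\ellinfty$. That settles the \enquote{some/all} clauses at once.

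\emph{Uniqueness under \SCinf{}.} Let $u,v$ be bounded pointwise solutions with the same datum $u_0$. Set $w=u-v$ and $a(t,x)=\frac{\phi(u)-\phi(v)}{u-v}\ge 0$, with $a=0$ where $u=v$; since $u,v$ are bounded and $\phi$ is continuous and increasing, $a\Phi$ is well defined on the range of $u,v$. Then $\partial_t w=-\Delta(\Phi u-\Phi v)$ and $\Phi u-\Phi v$ is bounded. We integrate in time. Let $W(t,x)=\int_0^t(\Phi u-\Phi v)(s,x)\dd s$, which is bounded on $[0,T]$. Then
\[
w(t)=-\Delta W(t).
\]
We test against the sign of $\Phi u-\Phi v$, which equals the sign of $w$ wherever the difference is nonzero. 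The strategy I would follow is an $L^1$-contraction/Kato-type argument in the spirit of \cite{GIMP}. Take $\lambda>0$ and consider the Laplace transform $\hat W(x)=\int_0^\infty e^{-\lambda t}(\Phi u-\Phi v)\dd t$, after extending solutions or truncating at $T$ with a cutoff in time. Using Kato's inequality $\Delta|f|\le \operatorname{sgn}(f)\Delta f$ and the monotonicity of $\phi$, the function $h=|\hat W|$, or a suitable positive part of a time average, satisfies $\Delta h+\lambda' h\le 0$ for some $\lambda'>0$, up to bounded constants. Here $\lambda'$ comes from $\lambda$ and $\sup a$, which is finite only if $\phi$ is Lipschitz on the range. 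To avoid needing Lipschitz $\phi$, I would instead rely on the bounded elliptic characterization of \SCinf{} in Proposition~\ref{prop:SCinf-characterizations}: every bounded nonnegative $h$ with $(\Delta+\alpha)h\le 0$ vanishes. Alternatively, the Liouville theorem of Schmidt--Zimmermann, Theorem~\ref{thm:S-Z-liouville}, does the same job: bounded $h\ge0$ with $\Delta\Phi h+h\le0$ vanishes. Applied to the positive part of the time-discretized difference, this gives $w=0$.

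Concretely, the cleanest route is an implicit time discretization comparison, following the Crandall--Liggett approach. Both solutions are compared with the resolvent scheme $(\id+\tau\Delta\Phi)^{-1}$. Uniqueness of bounded solutions of the elliptic problem $v+\tau\Delta\Phi v=f$ under \SCinf{} follows from Theorem~\ref{thm:S-Z-liouville}. The main obstacle is showing that an arbitrary bounded pointwise solution coincides with the limit of this scheme, that is, obtaining a bounded subsolution of the elliptic problem from the parabolic difference. I would attempt this by proving $(u-v)_+$ satisfies the Kaplan-type inequality
\[
\int_0^T e^{-t}(u-v)_+\dd t+\Delta\!\int_0^Te^{-t}(\Phi u-\Phi v)_+\dd t\le 0
\]
pointwise, using $(u-v)_+(0)=0$. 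Integrating by parts in time produces the $h$ term and a nonpositive boundary term. Since $(\Phi u-\Phi v)_+$ vanishes whenever $(u-v)_+=0$, the needed control is available: the left time integral dominates a function that is positive wherever $\int e^{-t}(\Phi u-\Phi v)_+\dd t$ is positive. With a maximum principle argument at infinity (Omori--Yau, Proposition~\ref{prop:SCinf-characterizations}), we obtain $(\Phi u-\Phi v)_+=0$. Then $\partial_t w=0$, so $w=0$.

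\emph{Nonuniqueness without \SCinf{}.} Let $\eta$ be the bounded positive solution of $(\Delta+\alpha)\eta=0$, $0<\eta\le1$, $\eta\ne 0$, given by Proposition~\ref{prop:SCinf-characterizations}; equivalently $\eta=1-\alpha\int e^{-\alpha s}(\ldots)$ measures the escape to infinity. Given the minimal bounded solution $u$ from \cite{bianchi2026bounded}, we construct further solutions by imposing different \enquote{boundary values at infinity}. On an exhaustion $K_n$, solve the Dirichlet problems with exterior data $\Phi^{-1}$ of $\Phi u+c\,\eta_n$-type barriers and pass to the limit by monotonicity and boundedness. The resulting solution has the same initial datum, and for $c$ ranging over a small interval it differs from $u$, since $\eta$ does not vanish. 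This gives infinitely many solutions. Here nonconstancy of $\phi$ supplies an interval on which $\phi$ is invertible enough to realize the barrier.
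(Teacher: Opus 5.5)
Both directions of your proposal have genuine gaps.

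\textbf{Uniqueness.} You leave the Schmidt--Zimmermann and Crandall--Liggett routes unfinished, and Theorem~\ref{thm:S-Z-liouville} is only available for $\kappa=0$ and strictly increasing $\psi$. So everything rests on your Kaplan-type inequality. That inequality is correct, and it is exactly the paper's structure. With $q=(u-v)_+$ and $\rho=(\Phi u-\Phi v)_+$, the positive-part Kato inequality (Lemma~\ref{lem:positive-part-kato}) and integration by parts give $Q+\Delta W\le0$. Here $Q,W$ are the averages of $q,\rho$ against the probability weight $e^{-t}\dd t/(1-e^{-T})$ on $[0,T]$.

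The gap is the next step. Knowing only that $Q>0$ wherever $W>0$ does not contradict the weak Omori--Yau principle. That principle merely asserts $\sup_{X_\gamma}\Delta W\ge0$ on $X_\gamma=\{W>W^*-\gamma\}$, so you need $\inf_{X_\gamma}Q>0$. Qualitative positivity cannot deliver this. On the stochastically complete lattice $\mathbb Z^3$ (with $\mu=1$), take $W=C-\sum_y c_y\,g(\cdot,y)$, with $g$ the Green function, $c_y>0$ rapidly decaying and $C$ large. It is bounded and positive, yet $\Delta W(x)=-c_x<0$ everywhere.

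What is missing is a uniform lower bound on $Q$ by an increasing function of $W$. The paper obtains it from the uniform continuity of $\phi$ on a compact interval containing the ranges of $u$ and $v$; this is where boundedness enters. Lemma~\ref{lem:modulus} gives a concave modulus $\omega$ with $\rho\le\omega(q)$. Jensen's inequality for the convex $\omega^{-1}$ then gives $Q\ge\omega^{-1}(W)$, hence $\Delta W\le-\omega^{-1}(W^*-\gamma)<0$ on $X_\gamma$ (Lemma~\ref{lem:averaged-modulus}). This is precisely the non-Lipschitz obstruction you identified but did not resolve.

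A smaller point: $W=0$ only gives $\Phi u\le\Phi v$. You conclude $u\le v$ from $Q\le-\Delta W=0$ and then swap $u$ and $v$; it does not follow from $\partial_t w=0$.

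\textbf{Nonuniqueness.} Your sketch contains no argument that the exhaustion limit differs from $u$. As $n\to\infty$ the exterior data are pushed off to infinity. ``Since $\eta$ does not vanish'' does not show that they still influence the limit, and this is the entire difficulty.

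The paper takes constant exterior data $\alpha$. Then the constants $A_\alpha\le B_\alpha$ confine the finite solutions to a fixed box, and a limit is extracted by Arzel\`a--Ascoli and a diagonal argument, since there is no monotonicity in $n$. Integrating in time gives $\Delta^{S\phi(\alpha)}_{\Omega_n}F^\alpha_{S,n}=u_0-u^\alpha_n(S)$, with right-hand side bounded by $R_\alpha$. The paper then traps $F^\alpha_{S,n}$ on $\Omega_n\cap\{V>1-\varepsilon_*\}$ between $S(\phi(\alpha)-c_\alpha(1-V))$ and $S(\phi(\alpha)+d_\alpha(1-V))$, uniformly in $n$. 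Here $V=2U-1$ is the defect function of Lemma~\ref{lem:defect-infinity}. In the limit, $F^\alpha_S(x)/S\to\phi(\alpha)$ as $V(x)\to1$, which separates the solutions for different values of $\phi(\alpha)$.

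Your construction also has two technical problems. Exterior data of the form $\Phi^{-1}(\Phi u+c\,\eta_n)$ require inverting $\phi$, but an element of $\mathcal I$ need not be strictly increasing on any interval; a Cantor-type function is an example. These data are also not ordered in $n$, so passing to the limit ``by monotonicity'' is unjustified.
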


Furthermore, we prove that a weighted graph does not satisfy \SCinf{} if and only if there are infinitely many bounded solutions, see Theorem~\ref{thm:killing-nonlinear-nonunique}. The equivalent conditions for \SCinf{} established in this paper, including an additional comparison principle characterization, are collected in Theorem~\ref{thm:killing}. The proof combines graph-specific exhaustion and comparison arguments with ideas parallel to those used in the manifold setting.  In particular, the exterior-Dirichlet construction and the concave modulus of continuity are discrete counterparts of the arguments in~\cite[proof of Theorem~1.7 and Lemma~5.1]{GIMP}.  The additional issues here arise from the lack of a chain rule, the pointwise solution framework, and the presence of a killing term. Moreover, we avoid any local finiteness assumption, that is, we do not assume that $|\{y\in X \mid w(x,y)>0\}|<\infty$ for all $x\in X.$  The weak Omori--Yau characterization used in the proof comes from~\cite[Corollary~7.29]{KLW21}, for its original no-killing graph version, see~\cite{Huang2011OmoriYau}.  The modulus estimate is essential in the fast diffusion range, where $\phi(s)=s|s|^{m-1}$ is not locally Lipschitz at the origin for $0<m<1$.

We complement the uniqueness dichotomy with a mass characterization.
We first define the generalized mass balance property for solutions as follows.
\begin{definition}[Generalized mass balance]
\label{def:generalized-mass-conservation}
Let $T>0$ and let $u \colon [0,T]\times X\to[0,\infty)$ be a pointwise solution of the \ref{eq:GPME} with initial datum $u_0$.
We say that $u$ satisfies the \emph{generalized mass balance} on $[0,T]$ if
\begin{equation}\tag{MB}\label{eq:nonlinear-generalized-mass}
     \sum_{x\in X}u(t,x)\mu(x)
    +\int_0^t\sum_{x\in X}\kappa(x)\,\phi(u(s,x))\dd s
    =\sum_{x\in X}u_0(x)\mu(x)
    \qquad \text{for all } 0\le t\le T.
\end{equation}
\end{definition}
In particular, when \MB{} holds, this means that
the total mass dissipated by the killing term is measured by the
nonlinear flux $\Phi u$.

We define a one-sided linear growth condition at the origin for the nonlinearity as follows
\begin{equation}\tag{LG}\label{eq:LG}
    \limsup_{r\downarrow0}\frac{\phi(r)}{r}<\infty.
\end{equation}
The next main result is that under some conditions, stochastic
completeness at infinity holds if and only if every bounded positive
solution of the \ref{eq:GPME} with finite-mass initial
data satisfies the generalized mass balance. Let $\R_+:=(0,\infty).$
\begin{mainthm}[see Theorem~\ref{thm:generalized-mass-characterization}] Let $G$ be a graph and $\phi\in\mathcal{I}$ with $\phi^{-1}(\R_+)\neq\emptyset.$ Assume that either
   \[ \mu(X)<\infty
    \qquad\text{or}\qquad
    \phi\text{ satisfies }\LG{}.\]
Then \(G\) satisfies \SCinf{} if and only if
for every \(T>0\) and every bounded
    positive pointwise solution \(u\) of the \ref{eq:GPME} with initial
    datum \(u_0\in \ell^1(X,\mu) \cap \ell^\infty(X)\), $u$ satisfies the generalized mass
    balance~\MB{}.
\end{mainthm}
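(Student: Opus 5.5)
We prove the two implications separately. The forward direction is a mass computation on finite exhaustions; the converse is a construction from the failure of \SCinf{}.

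\emph{Forward direction.} Assume \SCinf{}. Let \(u\) be a bounded positive pointwise solution with \(u_0\in\ell^1(X,\mu)\cap\ell^\infty(X)\).
\begin{enumerate}[(i)]
\item Fix a finite exhaustion \(K_n\uparrow X\) and test the equation against \(\mu\one_{K_n}\). Summing over \(K_n\) and integrating in time gives
\[
\sum_{K_n}u(t)\mu-\sum_{K_n}u_0\mu+\int_0^t\sum_{K_n}\kappa\,\phi(u)\dd s
=-\int_0^t\sum_{x\in K_n}\sum_{y\notin K_n}w(x,y)\bigl(\phi(u(s,x))-\phi(u(s,y))\bigr)\dd s .
\]
The interior edge terms cancel by symmetry of \(w\). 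Absolute convergence holds because \(\phi(u)\) is bounded and \(\sum_y w(x,y)<\infty\).
\item The task is to show that the boundary flux on the right tends to \(0\). First I show \(u(t)\in\ell^1\) and \(\kappa\phi(u)\in L^1_{t}\ell^1\). By the comparison results of \cite{bianchi2026bounded} and Theorem~\ref{thm:mainA}, the unique bounded solution is the limit of the exhaustion Dirichlet solutions \(u_n\).
\item For \(u_n\), the balance on \(K_n\) holds with a signed outward flux \(\ge0\). Since \(u_n\ge0\) vanishes off \(K_n\), this flux is \(\sum_{x\in K_n}\sum_{y\notin K_n}w(x,y)\phi(u_n(s,x))\). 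Hence
\[
\sum u_n(t)\mu+\int_0^t\sum\kappa\phi(u_n)+F_n(t)=\sum_{K_n}u_0\mu,\qquad F_n\ge0 .
\]
Fatou's lemma then gives \(\sum u(t)\mu+\int\sum\kappa\phi(u)\le\sum u_0\mu\).
\item For the reverse inequality, the defect \(D(t)=\lim F_n(t)\) must vanish. Here I use the linear characterization. Choose \(v\) with \(\Delta v\ge\lambda v\), \(0\le v\le1\): the bounded elliptic/Omori--Yau form of \SCinf{} in Proposition~\ref{prop:SCinf-characterizations} forbids nontrivial such \(v\) (equivalently, the minimal solution \(h\) of \(\Delta h+\lambda h=\kappa/\mu\) with \(1-h\) \(\lambda\)-harmonic equals \(1\)). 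Test \(u_n\) against \(h\mu\) (or \(1-h\)), and bound the defect by \(\int\sum\phi(u_n)\,\Delta(1-h)\)-type terms, which vanish.
\end{enumerate}
Under \LG{}, write \(\phi(u)\le C u\). Then \(\phi(u)/u\) acts as a bounded time-dependent coefficient, and the flux through \(\partial K_n\) is dominated by a linear heat-type quantity that is controlled by \(\sum_{X\setminus K_n}\) tails of \(u\mu\). When \(\mu(X)<\infty\), boundedness of \(u\) alone makes all tails summable, and the same argument runs with \(\phi(u)\le\|\phi\|_{L^\infty[0,\|u\|_\infty]}\) and \(\one\in\ell^1\).

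\emph{Converse.} Suppose \SCinf{} fails. I would take the nonuniqueness construction from Theorem~\ref{thm:killing-nonlinear-nonunique}, or build directly the minimal solution \(u\) from a small compactly supported datum \(u_0\ge0\) with \(\phi(u_0)>0\) somewhere; this uses \(\phi^{-1}(\R_+)\ne\emptyset\). I then show it loses mass at infinity: its outward flux defect is positive. Compare \(\phi(u)\) from below with a multiple of the linear defect \(1-e^{-tL}1-\int e^{-sL}\kappa/\mu\), using a concave modulus of \(\phi\) as in \cite{GIMP} to get \(u(t)\ge c\,\phi^{-1}\)-type lower bounds that escape. Alternatively, add to \(u\) a positive bounded solution coming from the exterior-Dirichlet construction with zero initial datum; since it carries positive mass from nothing, it violates \MB{} immediately. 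This alternative is simpler: it needs only the existence of a nonzero bounded positive solution with zero data, modulo positivity, obtained by taking the maximal solution from \(u_0\) minus the minimal one. Note, however, that the difference of two solutions is not a solution in the nonlinear case, so I would instead compare the masses of the maximal and minimal solutions directly: they differ, so at most one can satisfy \MB{}.

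The main obstacle is step (iv) of the forward direction: showing that the boundary flux vanishes using only \SCinf{}, without a chain rule. It is there that \LG{} or \(\mu(X)<\infty\) is essential.
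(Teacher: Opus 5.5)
Your steps (i)--(iii) are sound and agree with the paper. Under \SCinf{} the bounded solution is the monotone limit of the zero-exterior exhaustion solutions, and monotone convergence in the finite balance gives $\|u(t,\cdot)\|_1+\int_0^t\sum_x\kappa(x)\phi(u(s,x))\dd s\le\|u_0\|_1$. However, this inequality holds for the minimal solution on \emph{every} graph. Moreover, by (iii) and monotone convergence, your defect is $D(t)=\|u_0\|_1-\|u(t,\cdot)\|_1-\int_0^t\sum_x\kappa(x)\phi(u(s,x))\dd s$, so $D\equiv0$ is literally \MB{}. Step (iv) is therefore the entire proof, and you do not supply it. The test functions you suggest give nothing. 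Whatever normalization you intend, \SCinf{} means $\lambda R_\lambda1+R_\lambda K\equiv1$. Testing $u_n$ against it just reproduces the finite balance with the same flux $F_n(t)$. A bound of $F_n(t)$ by terms containing $\Delta(1-h)\equiv0$ cannot hold at finite $n$, where $F_n(t)>0$ in general. (Note also the sign: the criterion concerns $(\Delta+\lambda)v\le0$, not $\Delta v\ge\lambda v$.) Your \LG{} heuristic fails too. The flux across $\partial K_n$ is weighted by $w$, which is unrelated to $\mu$, so it is not controlled by $\mu$-tails of $u$. Since that mechanism never invokes \SCinf{}, it would equally apply to $\phi=\id$ on a stochastically incomplete graph. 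There $P_t\one_{\{x_0\}}\in\ell^1(X,\mu)$ has vanishing tails, yet its time-integrated boundary flux tends to a positive limit.

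The missing idea is to work with $u$ itself and the time-integrated flux $F_t:=\int_0^t\phi(u(s,\cdot))\dd s$, which satisfies $\Delta F_t=u_0-u(t,\cdot)$ pointwise. Your $\ell^1$ bound from (iii) combines with either $\mu(X)<\infty$ (so $F_t$ is bounded) or \LG{} (so $\phi(u)\le Cu$ and $\|F_t\|_1\le Ct\|u_0\|_1$). Either way $F_t,\Delta F_t\in\ell^1(X,\mu)\cap\ellinfty$, and this is the only role of the alternative hypothesis. What remains is the no-flux identity $\sum_x\Delta F_t(x)\mu(x)=\sum_x\kappa(x)F_t(x)$, which is equivalent to your boundary flux tending to $0$. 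The paper derives it from \SCinf{} through the resolvent. Injectivity of $\beta+\Delta$ on $\ellinfty$ forces $F_t=R_\beta g$ with $g:=(\beta+\Delta)F_t\in\ell^1(X,\mu)\cap\ellinfty$. Then $|F_t|\le R_\beta|g|$ and $R_\beta K\le1$ give $KF_t\in\ell^1(X,\mu)$. Pairing $1=\beta R_\beta1+R_\beta K$ with $g$, using self-adjointness via truncations, yields $\langle g,1\rangle=\beta\langle F_t,1\rangle+\langle F_t,K\rangle$, which is the identity and hence \MB{}.

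For the converse, your final variant is the paper's argument, but two details are missing. First, you must justify that the two ordered positive solutions differ. Use exterior values $0$ and $\alpha_2>0$ with $\phi(\alpha_2)>0$ in the construction of Theorem~\ref{thm:killing-nonlinear-nonunique} and its separation step; this is where $\phi^{-1}(\R_+)\neq\emptyset$ enters. Second, compare $\mathsf M_{u_j}(t)+\int_0^t\sum_x\kappa(x)\phi(u_j(s,x))\dd s$ rather than masses alone. Use $u_1\le u_2$ to order the killing terms, and count infinite values as violations.
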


On graphs of finite measure this covers every
bounded datum and every nonlinearity; on graphs of arbitrary measure it
holds under the sharp linear growth condition
\LG{}, which admits the linear and
porous medium powers.  The balance also extends to positive
$\ell^1$-valued pointwise solutions that are bounded on every positive-time
interval.  No summability assumption on
$\kappa$ is imposed: stochastic completeness at infinity itself supplies
the required integrability through a no-flux identity at infinity, see
Lemma~\ref{lem:no-flux-killing}.  The growth and boundedness hypotheses
are shown to be necessary by explicit birth--death chain examples, see
Example~\ref{ex:sharpness-mass-extensions}.  In
the no-killing case, the balance reduces to conservation of mass and
stochastic incompleteness is then detected by the simultaneous loss and
creation of mass from the same constant initial datum, see
Corollary~\ref{cor:mass-no-killing}.

Note that none of our main results require the underlying graph to be locally finite.
Additionally, our framework naturally allows for killing terms.
This extends our findings to nonlocal and Schrödinger operators with positive potentials.

The paper is organized as follows.  Section~\ref{sec:setting} fixes the setting and recalls
the notion of stochastic completeness, its version at infinity, and
various equivalent formulations.
Section~\ref{sec:comparison} collects the finite subgraph comparison principles and the construction of extremal bounded solutions from~\cite{bianchi2026bounded}.  Section~\ref{sec:unique-infinity} proves uniqueness under stochastic completeness at infinity, Section~\ref{sec:nonunique-infinity} constructs infinitely many solutions when it fails, and Section~\ref{sec:parabolic-characterization} combines the two results into a characterization, which also includes detection by a single initial datum and a global comparison principle. A generalized mass-conservation formulation for pointwise solutions, including an extension to $\ell^1$-valued solutions bounded away from the initial time, is given in Section~\ref{sec:mass-characterization},
together with a measure change characterization that reduces stochastic completeness at infinity to ordinary stochastic completeness.

\section{Notation and background}\label{sec:setting}
In this section, we introduce the setting and notations.
We then recall several equivalent formulations of stochastic completeness at infinity
and mention, for context and completeness, the recent work of Schmidt--Zimmermann~\cite{SZ}.

Throughout, $X$ is countable, $\mu \colon X\to(0,\infty)$, $w \colon X\times X\to[0,\infty)$ is symmetric with $w(x,x)=0$ and
\[
    \sum_{y\in X} w(x,y)<\infty\qquad\text{for every }x\in X.
\]
The killing term is a function $\kappa \colon X\to[0,\infty)$; we refer to $\kappa=0$ as the
\emph{no-killing case}. We call the quadruple $G=(X, w, \kappa, \mu)$ a \emph{graph}.

In this paper, we always consider \emph{connected} graphs, i.e.,
for any $x,y\in X,$ there exists a path
$(x_i)_{i=0}^N$ such that $w(x_i,x_{i+1})>0$ for all $0\leq i\leq N-1,$ $x_0=x$, and $x_N=y.$
If $\Om \subseteq X$, then $\Om$ induces a subgraph by restricting $w$ to $\Om \times \Om$
and $\mu$ and $\kappa$ to $\Om$. We then call the subset \emph{connected} if the graph
induced by the subset is connected.
We call a sequence of finite connected subsets $\Om_n \subseteq X$
such that $\Om_n \subseteq \Om_{n+1}$ for all $n$ and $X= \bigcup_n \Om_n$
an \emph{exhaustion} of $G$.

We introduce the usual $\ell^p$ spaces by
\begin{align*}
\ell^p(X,\mu)&= \{ f \colon X\to\R \mid \sum_{x \in X} |f(x)|^p \mu(x)< \infty \}	&\mbox{for } p \in [1,\infty)\\
\ell^\infty(X) &= \{ f  \colon X\to\R  \mid \sup_{x \in X}
|f(x)| < \infty\} &\mbox{for } p=\infty
\end{align*}
	with norms
\[
\|f\|_p= \begin{cases}
	\left(\sum_{x\in X} |f(x)|^p\mu(x)\right)^{{1}/{p}} & \mbox{for } p \in [1,\infty)\\
	\sup_{x\in X}|f(x)| & \mbox{for } p =\infty.
\end{cases}
\]

For a function $f\colon X \to \R$ we write $f \geq 0$ if $f(x)\geq0$ for all $x \in X$
and call such functions \emph{positive}. A function such that $-f$ is positive is
called \emph{negative}.

For an interval $I \subseteq \R$, we denote by
$C(I;\ell^p(X,\mu))$ and $C^1(I;\ell^p(X,\mu))$
the set of continuous, respectively, continuously differentiable, functions from the interval
to the $\ell^p$-space
with respect to the $\ell^p$-norm.
When $\ell^p(X,\mu)$ is replaced by $\R$ in the above,
we simply omit it in the notations.

Throughout, $\phi\in\mathcal{I}$ where
$$\mathcal I:=\{\phi\colon\R\to\R \mid \phi\text{ is continuous, increasing, nonconstant, with }\phi(0)=0\}$$
and $\Phi u:=\phi\circ u$.

Let
\[
    \mathcal F:=\left\{f\colon X\to\R \mid \sum_{y\in X}w(x,y)|f(y)|<\infty\text{ for every }x\in X\right\}.
\]
For $f\in\mathcal F$ and $x \in X$, we define the Laplacian $\Delta$ via
\[
    \Delta f(x)=\frac{1}{\mu(x)}\sum_{y\in X}w(x,y)(f(x)-f(y))+\frac{\kappa(x)}{\mu(x)}f(x).
\]
Since $\sum_{y\in X}w(x,y)<\infty$ for every $x\in X$, every bounded function belongs to $\mathcal F$, that is, $\ellinfty\subseteq \mathcal F$.

For the nonlinearity $\phi \in \mathcal{I}$, set
\[
    \mathcal{F}_\Phi:=\{f \colon X\to\R \mid \Phi f\in\mathcal F\}
\]
and, for $f \in \mathcal{F}_\Phi$ and $x \in X$, let
 $$\Delta\Phi f:=\Delta(\phi\circ f).$$
If $f$ is bounded, then so is $\Phi f$, hence, $\ellinfty\subseteq \mathcal{F}_\Phi$.

We now define the notion of pointwise solutions of the \ref{eq:GPME}.
\begin{definition}[Pointwise solution]\label{def:pointwise}
Let $T>0$ and $u_0 \colon X\to\R$.  A \emph{pointwise solution of the \ref{eq:GPME}
on $[0,T]\times X$ with initial datum $u_0$}
is a function $u \colon [0,T]\times X\to\R$ such that
\begin{enumerate}[label=(\alph*)]
    \item[\textup{(a)}] $u(t,\cdot)\in\mathcal{F}_\Phi$ for every $t\in(0,T)$;
    \item[\textup{(b)}] $u(\cdot,x)\in C([0,T])\cap C^1((0,T))$ for every $x\in X$;
    \item[\textup{(c)}] $(\partial_t +\Delta\Phi)u(t,x)=0$ for every $(t,x)\in(0,T)\times X$;
    \item[\textup{(d)}] $u(0,x)=u_0(x)$ for every $x\in X$.
\end{enumerate}
The solution $u$ is a \emph{bounded pointwise solution} if $u$ is bounded.
\end{definition}

We note that for bounded
solutions, condition~(a) is automatically satisfied because
$\ellinfty\subseteq\mathcal{F}_\Phi$.

A function $u \colon [0,\infty)\times X\to\R$ is a \emph{global pointwise
solution} if the restriction of $u$ to $[0,T]\times X$ satisfies conditions
\textup{(a)}--\textup{(d)} above for every $T>0$.  It is a \emph{global
bounded pointwise solution} if, in addition, it is bounded on
$[0,\infty)\times X$.

The following semilinear Liouville theorem of Schmidt--Zimmermann~\cite{SZ} is the elliptic counterpart, in the no-killing case, of the parabolic characterization proved in this paper.  It is stated for context and is not used in the proofs below. A function $\phi\colon\R\to\R$ is \emph{strictly increasing} if
\(\phi(s_1)<\phi(s_2)\) whenever \(s_1 < s_2\). For $\psi\in\mathcal I$ and $h\colon X\to\R$, we write $\Psi h:=\psi\circ h$.

\begin{theorem}[Schmidt--Zimmermann]\label{thm:S-Z-liouville}
Assume $\kappa=0$.  A graph $G$ satisfies \SC{} if and only if for one (equivalently, all) strictly increasing $\psi\in\mathcal I$, the only $0\le h\in\ellinfty$ satisfying
\begin{equation}\label{eq:nonlinear-liouville-eq}
    -\Delta h=\Psi h
\end{equation}
pointwise on $X$ is $h=0$.  In particular, if $G$ does not satisfy \SC{}, then for every strictly increasing $\psi\in\mathcal I$ there exists a nontrivial $0\le h\in\ellinfty$ satisfying~\eqref{eq:nonlinear-liouville-eq}.
\end{theorem}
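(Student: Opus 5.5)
The forward implication will follow from the weak Omori--Yau characterization of \SC{} (for $\kappa=0$ this is the no-killing case of~\cite[Corollary~7.29]{KLW21}, see also~\cite{Huang2011OmoriYau}). The converse is a construction by exhaustion and finite-dimensional solvability, using a bounded $\alpha$-harmonic function supplied by stochastic incompleteness to keep the limit away from zero.

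\emph{\SC{} implies the Liouville property.} Let $0\le h\in\ellinfty$ solve $-\Delta h=\Psi h$ and set $h^*:=\sup_X h<\infty$. Weak Omori--Yau gives a sequence $(x_n)$ with $h(x_n)\to h^*$ and $\liminf_n \Delta h(x_n)\ge 0$. Along this sequence $\psi(h(x_n))=-\Delta h(x_n)$, so $\limsup_n\psi(h(x_n))\le 0$. By continuity $\psi(h^*)\le 0$. Since $\psi$ is strictly increasing with $\psi(0)=0$ and $h^*\ge0$, this forces $h^*=0$, hence $h=0$. This direction only uses that $\psi$ is increasing and vanishes only at $0$, which gives the ``all $\psi$'' version.

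\emph{Failure of \SC{} produces a nontrivial solution.} If \SC{} fails, Proposition~\ref{prop:SCinf-characterizations} (with $\kappa=0$) yields, for a fixed $\alpha>0$, a nontrivial $v\in\ellinfty$ with $0\le v\le 1$ and $\Delta v=-\alpha v$. Equivalently, $v=1-\alpha\int_0^\infty e^{-\alpha t}e^{-tL}1\dd t$, so $v$ is obtained from the minimal resolvent.

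Fix an exhaustion $(\Om_n)$. On each finite $\Om_n$ I would solve the Dirichlet-type problem
\[
-\Delta h_n=\Psi h_n \ \text{on } \Om_n,\qquad h_n=c \ \text{on } X\setminus\Om_n,
\]
for a constant $c>0$, looking for $0\le h_n\le c$. The constant $c$ is a natural barrier from one side, since $-\Delta c=0\le\psi(c)$. Existence would follow from Brouwer's theorem applied to the map $g\mapsto$ (solution of the linear Dirichlet problem $\Delta f+\lambda f=\lambda g-\psi(g)$ on $\Om_n$ with exterior datum $c$). This map sends $[0,c]^{\Om_n}$ into itself provided $\lambda$ is large enough that $s\mapsto\lambda s-\psi(s)$ is increasing on $[0,c]$. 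If $\psi$ is not Lipschitz, I would first approximate $\psi$ by Lipschitz $\psi_k$ and pass to the limit on the finite set $\Om_n$. A finite-subgraph comparison principle should then show that $h_n$ is monotone in $n$. A diagonal argument gives pointwise convergence $h_n\to h$. Since all terms are bounded by $c$ and $\sum_y w(x,y)<\infty$, dominated convergence lets $h$ solve $-\Delta h=\Psi h$ on all of $X$, with $0\le h\le c$.

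\emph{Main obstacle: showing $h\neq 0$.} For this I would compare $h_n$ with a lower barrier built from $v$. The natural candidate is $c\,(1-v)$ modified on $\Om_n$, or, more robustly, the minimal $\Om_n$-resolvent quantity $c\,(1-\alpha(L_{\Om_n}+\alpha)^{-1}1)$, which increases to $c(1-\alpha(L+\alpha)^{-1}1)$. The hope is to prove $c-h_n\le C\,(c-\text{this barrier})$ uniformly in $n$, so that the limit satisfies $h\ge c-C\,c(1-v)$, which is nontrivial for a suitable choice of $c$. The sign and growth of $\psi$ near $0$ interfere here: when $\psi(s)\gg\alpha s$ (fast-diffusion type), the linear comparison with $v$ is not directly available. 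I would then try to exploit that $\alpha$ is arbitrary, choosing $\alpha$ comparable to $\psi(c)/c$ on the relevant range, so that the nonlinear term is dominated by the linear one there. If that fails, I would use the concave-modulus trick (as in~\cite[Lemma~5.1]{GIMP}) to transfer the comparison through $\psi$.
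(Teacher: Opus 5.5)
The paper gives no proof of this statement; it is quoted from \cite{SZ} for context and never used. Judged on its own merits, your argument is incomplete at the step that matters.

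Several parts are fine. The direction \SC{}$\Rightarrow$Liouville via weak Omori--Yau is correct, once you argue by contradiction from $h^*>0$. So are the finite-set existence and the decreasing limit $0\le h\le c$. Note, though, that comparison on each $\Om_n$ shows this $h$ is just the maximal solution with values in $[0,c]$. The genuine gap is exactly the step you call the main obstacle: you never prove $h\neq 0$. Since $h\equiv 0$ always solves the equation, the construction establishes nothing without that step. The routes you list (a linear bound on $c-h_n$, tuning $\alpha$ to $\psi(c)/c$, the concave modulus) remain hopes. Your worry that fast-diffusion growth of $\psi$ at $0$ blocks the comparison is also misplaced.

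The missing idea has two parts. First, compare through the nonlinear operator $h\mapsto\Delta h+\psi(h)$ itself; it satisfies a finite-set comparison principle because $\psi$ is increasing. Second, use a barrier that only has to beat the upper bound $\psi(c)$, and only where the barrier is positive. Concretely:
\begin{enumerate}
\item Normalize $v$ so that $\sup_X v=1$, which is allowed because $\Delta v=-\alpha v$ is linear. Put $\underline h:=c-C(1-v)$ with $C\ge c+\psi(c)/\alpha$. Since $\kappa=0$, $-\Delta\underline h=C\alpha v\ge 0$.
\item On $\{\underline h\le 0\}$ you have $\psi(\underline h)\le 0\le -\Delta\underline h$.
\item On $\{\underline h>0\}$ you have $v>1-c/C$, hence $-\Delta\underline h>\alpha(C-c)\ge\psi(c)\ge\psi(\underline h)$.
\item Therefore $\Delta\underline h+\psi(\underline h)\le 0=\Delta h_n+\psi(h_n)$ on $\Om_n$, with $h_n$ extended by $c$, and $\underline h\le c$ off $\Om_n$.
\item If $\underline h-h_n$ had a positive maximum, it would be attained at some $x_0\in\Om_n$. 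There $\Delta(\underline h-h_n)(x_0)\ge 0$ and, by strict monotonicity, $\psi(\underline h(x_0))>\psi(h_n(x_0))$, contradicting the inequality in the previous item.
\item Hence $\underline h\le h_n$ for all $n$, so $\underline h\le h$. Since $\sup_X\underline h=c>0$, this gives $h\neq 0$.
\end{enumerate}
This is essentially the barrier $c-C(1-v)$ you already wrote down. It works for every strictly increasing $\psi\in\mathcal I$ with no growth condition at $0$. The reason is the same localization near $\sup_X v$ used in Step~2 of the proof of Theorem~\ref{thm:killing-nonlinear-nonunique}.
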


\begin{remark}
Theorem~\ref{thm:S-Z-liouville} directly restates
the equivalence (i) $\Longleftrightarrow$ (vi) of~\cite[Theorem~5.1]{SZ} together with~\cite[Corollary~5.2]{SZ}, specialized to the constant weight $W=1$. The formal Laplacian
$\mathcal L$ in \cite{SZ}
carries no killing, so it agrees with our $\Delta$ when $\kappa=0$, and their equation ``$-\mathcal Lf=\psi(Wf)$" reads $-\Delta h=\Psi h$.  Theorem~5.1 of~\cite{SZ} contains further equivalent formulations, in terms of super- and subsolutions and of a nonlinear resolvent.
\end{remark}

We next introduce some equivalent formulations of stochastic completeness at infinity
that will be useful in what follows.
For \(\lambda>0\) and \(h\in\mathcal{F}\), set
\[
    S_\lambda h:=(\id+\lambda\Delta) h.
\]
Functions $h \in \mathcal{F}$ such that $S_\lambda h=0$ are called \emph{$1/\lambda$-harmonic}.
Furthermore, we say that $G$ satisfies the \emph{weak Omori--Yau maximum
principle} if, for every $f\in\mathcal F$ bounded above with
$f^*:=\sup_X f\in(0,\infty)$, the super-level sets
$X_\delta:=\{x\in X \mid f(x)>f^*-\delta\}$ satisfy
\[
    \sup_{x\in X_\delta}\Delta f(x)\ge 0
    \qquad\text{for every }\delta\in(0,f^*).
\]
The following statement gathers the various formulations of stochastic completeness
at infinity that we will need in this paper.

\begin{proposition}[Known characterizations of \SCinf{}]
\label{prop:SCinf-characterizations}
Let $G$ be a graph.
The following are equivalent:
\begin{enumerate}[label=\textup{(\roman*)}]
\item \(G\) satisfies \SCinf{}.
\item For one (equivalently, all) \(\lambda>0\),
      \(S_\lambda=\id+\lambda\Delta\) is injective on \(\ellinfty\).
\item For one (equivalently, all) \(\beta>0\), every
      \(0\le h\in\ellinfty\) satisfying
      \((\Delta+\beta)h\le0\) vanishes.
\item Bounded solutions of the heat equation are uniquely determined
      by bounded initial data.
\item $G$ satisfies the weak Omori--Yau maximum principle.
\end{enumerate}
\end{proposition}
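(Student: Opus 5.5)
This proposition collects known results, so the plan is to organize them around the resolvent/elliptic formulation (ii)--(iii). Most of the links are in \cite{KL12} and \cite{KLW21}, and I would cite them; the remaining implications are short comparison arguments.

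\emph{Step 1: (i)$\Leftrightarrow$(ii)$\Leftrightarrow$(iii).} Take the Laplace transform in $t$ of the identity in Definition~\ref{def:SC-infinity}. With $\alpha=1/\lambda$ this turns it into the resolvent identity
\[
\alpha(L+\alpha)^{-1}1+(L+\alpha)^{-1}(\kappa/\mu)=1,
\]
where all operators are understood as the monotone limits of the bounded extensions to $\ellinfty$. Set $h:=1-\alpha(L+\alpha)^{-1}1-(L+\alpha)^{-1}(\kappa/\mu)$.
\begin{itemize}
\item One checks $h\ge0$ and $(\Delta+\alpha)h=0$, using that $(L+\alpha)^{-1}$ produces pointwise solutions of $(\Delta+\alpha)g=f$ for bounded $f$.
\item $h$ is the \emph{maximal} bounded $\alpha$-harmonic function in $[0,1]$. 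For this I compare, on each set $\Om_n$ of an exhaustion, with the Dirichlet resolvents; these increase to $(L+\alpha)^{-1}$ by the minimality of $L$.
\end{itemize}
Hence (i) fails iff a nonzero $0\le h\le 1$ satisfies $(\Delta+\alpha)h=0$. The equivalence "for one $t$ iff for all $t$" follows from the semigroup property.

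For (ii)$\Leftrightarrow$(iii):
\begin{itemize}
\item A nonzero bounded kernel element $g$ of $S_\lambda$ yields the positive subsolution $g_+$ (or $g_-$), since $\Delta g_+\le (\Delta g)\one_{\{g>0\}}$ holds pointwise; so (iii) implies (ii).
\item Conversely, a bounded positive subsolution $(\Delta+\beta)h\le0$ is dominated by the maximal $\beta$-harmonic function built as above. That function is then nontrivial, so (ii) implies (iii).
\end{itemize}

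\emph{Step 2: (ii)$\Leftrightarrow$(iv).}
\begin{itemize}
\item If $v$ is a bounded solution of the heat equation with $v(0)=0$, then $h:=\int_0^\infty e^{-\alpha t}v(t)\dd t$ is bounded and satisfies $(\Delta+\alpha)h=0$. Differentiating under the sum is justified by boundedness and $\sum_y w(x,y)<\infty$. So (ii) gives $h=0$ for every $\alpha>0$, and injectivity of the Laplace transform forces $v=0$.
\item Conversely, a nonzero bounded $\alpha$-harmonic $h$ gives $v(t)=e^{\alpha t}h$. This is a bounded solution only on finite intervals $[0,T]$, which is all (iv) requires; subtracting $e^{-tL}h$ then produces two distinct bounded solutions with the same initial datum.
\end{itemize}

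\emph{Step 3: (iii)$\Leftrightarrow$(v).} This is \cite[Corollary~7.29]{KLW21}, and I would cite it. To sketch (v)$\Rightarrow$(iii): apply the principle to $f=h$ with $h^*>0$. On $X_\delta$ we have $\Delta h\le-\beta h<-\beta(h^*-\delta)<0$, a contradiction. For the converse, a violating $f$ gives, after truncating via $(f-(f^*-\delta))_+$ and rescaling, a positive bounded subsolution.

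\textbf{Main obstacle.} The main obstacle is Step 1, specifically identifying the defect $h$ as the maximal $\alpha$-harmonic function in the non-locally-finite setting with killing. This requires monotone convergence of the Dirichlet resolvents on $\Om_n$ and an exchange of sums and limits, which I would justify by dominated convergence using $\sum_y w(x,y)<\infty$.
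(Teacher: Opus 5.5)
Your Steps~1 and~3 are sound. They reconstruct the arguments behind the results that the paper only cites (\cite{KL12}, \cite[Theorem~7.18 and Corollary~7.29]{KLW21}), and each ingredient works:
\begin{itemize}
\item the Kato-type inequality for $g_+$;
\item the exhaustion comparison producing $1-\alpha(L+\alpha)^{-1}1-(L+\alpha)^{-1}(\kappa/\mu)$. For (ii)$\Rightarrow$(iii) you need that this function dominates every $[0,1]$-valued \emph{sub}solution, not just every harmonic one, and your comparison does give that;
\item the truncation $(f-(f^*-\delta))_+$, where $c=f^*-\delta>0$ ensures $\Delta(f-c)=\Delta f-cK\le\Delta f$ despite the killing term.
\end{itemize}

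The gap is in Step~2: its two bullets use incompatible readings of (iv). The transform $h=\int_0^\infty e^{-\alpha t}v(t)\dd t$ needs $v$ to be a bounded solution on all of $[0,\infty)$. For the converse, however, you explicitly accept $e^{\alpha t}h$, which is bounded only on finite intervals. The two readings fail in different places.
\begin{itemize}
\item If (iv) refers to global bounded solutions, the second bullet gives no counterexample. You would need a globally bounded one, e.g.\ the defect $1-e^{-tL}1-\int_0^te^{-sL}(\kappa/\mu)\dd s$.
\item If (iv) refers to solutions on $[0,T]$, as your phrase ``which is all (iv) requires'' indicates, then the first bullet does not prove (ii)$\Rightarrow$(iv). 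A solution given only on $[0,T]$ has no Laplace transform. The truncated transform $W=\int_0^Te^{-\alpha t}v(t)\dd t$ only satisfies $(\Delta+\alpha)W=-e^{-\alpha T}v(T)$, which is not $\alpha$-harmonic.
\end{itemize}

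For the finite-horizon reading there are two easy repairs.
\begin{itemize}
\item Extend $v$ by $e^{-(t-T)L}v(T)$ for $t>T$. By dominated convergence (boundedness and $\sum_yw(x,y)<\infty$), both one-sided derivatives at $T$ equal $-\Delta v(T,x)$. So the concatenation is a bounded global solution, and your Laplace argument applies.
\item Closer to your Step~1: let $M=\sup|v|$ and let $g_n\ge0$ be the $\alpha$-harmonic function on $\Om_n$ with exterior value $1$. Then $Me^{\alpha t}g_n$ solves the heat equation on $\Om_n$ with exterior datum $Me^{\alpha t}\ge M\ge\pm v$ and initial value $Mg_n\ge0$. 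Lemma~\ref{lem:finite-comparison} with $\phi=\id$ gives $\pm v\le Me^{\alpha t}g_n$ on $[0,T]\times\Om_n$. Since $g_n$ decreases to a bounded $\alpha$-harmonic function with values in $[0,1]$, which vanishes under (ii), you get $v=0$.
\end{itemize}
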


\begin{proof}[Sketch of proof]
The equivalence of \textup{(i)}--\textup{(iii)} follows from
\cite[Theorem~1]{KL12}, see also \cite[Theorem~7.18]{KLW21}.
Indeed, \(S_\lambda h=0\) is the same as
\((\Delta+\lambda^{-1})h=0\), and the cited results characterize failure
of \SCinf{} by the existence of a nontrivial bounded nonnegative
subsolution of \((\Delta+\beta)h=0\) for $\beta>0$.
The equivalence of \textup{(i)} and \textup{(iv)} can also be found
in \cite[Theorem~1]{KL12}, see also
\cite[Theorem~7.16]{KLW21}. Note that the existence of a bounded solution
to the heat equation is
always provided by the minimal heat semigroup, so this is a uniqueness
characterization.  Finally, the equivalence between \textup{(i)} and \textup{(v)} is
proven in~\cite[Corollary~7.29]{KLW21}, see \cite{Huang2011OmoriYau}
for the original result.
\end{proof}

\section{Finite subgraph comparison and extremal bounded solutions}\label{sec:comparison}
In this section, we summarize recent results from \cite{bianchi2026bounded}
that will be needed in what follows.
In particular, we recall some comparison principles as well as the
construction and basic properties of pointwise solutions of the \ref{eq:GPME}.

For $\Omega\subseteq X$, let
\[
    \partial_e\Omega
    :=\{y\in X\setminus\Omega \mid w(x,y)>0\text{ for some }x\in\Omega\}
\]
be the \emph{exterior boundary} of $\Om$.  Given a map $\theta \colon \R\to\R$, an exterior datum
$\eta \colon X\setminus\Omega\to\R$ is \emph{$(\theta,\Omega)$-admissible} if
\[
    \sum_{y\notin\Omega}w(x,y)|\theta(\eta(y))|<\infty
    \qquad\text{for every }x\in\Omega.
\]
If $\eta$ is prescribed only on $\partial_e\Omega$, we extend it by zero to
$X\setminus(\Omega\cup\partial_e\Omega)$.  Constant exterior data are
identified with the corresponding constant functions on $X\setminus\Omega$
and are always $(\theta,\Omega)$-admissible.

Let now $\Omega\subseteq X$ be finite and let
$\eta \colon X\setminus\Omega\to\R$ be $(\operatorname{id},\Omega)$-admissible.
For $u \colon \Omega\to\R$, write $E_\Omega^\eta u \colon X \to \R$ for the extension
of $u$ by $\eta$
which is equal to $u$
on $\Omega$ and to $\eta$ on $X\setminus\Omega$ and write $\pi_\Omega$ for
the restriction of functions on $X$ to $\Omega$.  Denote by $C(\Om)$ the set of all functions $f\colon\Om\to\R$.  The finite subgraph
Laplacian with exterior datum $\eta$, $\Delta_\Omega^\eta\colon C(\Om)\to C(\Om)$, is defined as
\[
    \Delta_\Omega^\eta u:=\pi_\Omega\,\Delta(E_\Omega^\eta u),
\]
that is, for $x\in\Omega$,
\[
    \Delta_\Omega^\eta u(x)
    =\frac1{\mu(x)}\sum_{y\in\Omega}w(x,y)(u(x)-u(y))
    +\frac1{\mu(x)}\sum_{y\notin\Omega}w(x,y)(u(x)-\eta(y))
    +\frac{\kappa(x)}{\mu(x)}u(x).
\]
For the nonlinear equation, if $\eta$ is $(\phi,\Omega)$-admissible, one
prescribes the exterior datum before applying $\phi$:
\[
    \Delta_\Omega^\eta\Phi u:=\pi_\Omega\,\Delta\Phi(E_\Omega^\eta u)
    =\Delta_\Omega^{\phi\circ\eta}(\phi\circ u).
\]

The following is the sub- and super-solution form of the finite subgraph
parabolic comparison principle in~\cite[Theorem~3.1 and
Corollary~3.2]{bianchi2026bounded}.

\begin{lemma}\label{lem:finite-comparison}
Let $\Omega\subseteq X$ be finite and let $\eta_1,\eta_2 \colon [0,T]\times(X\setminus\Omega)\to\R$ be exterior data such that
$\eta_i(t,\cdot)$ is $(\phi,\Omega)$-admissible for every $t\in[0,T]$ and
$i=1,2$.  Let $u_1,u_2 \colon [0,T]\times\Omega\to\R$ satisfy
\[
    u_i(\cdot,x)\in C([0,T])\cap C^1((0,T))
    \qquad \text{for } x\in\Omega,\ i=1,2
\]
and
\[
    \left(\partial_t +\Delta_\Omega^{\eta_1(t,\cdot)}\Phi\right) u_1
    \le
    \left(\partial_t +\Delta_\Omega^{\eta_2(t,\cdot)}\Phi\right) u_2
    \qquad\text{on }(0,T)\times\Omega .
\]
If $u_1(0,\cdot)\le u_2(0,\cdot)$ on $\Omega$ and $\eta_1\le\eta_2$ on $[0,T]\times\partial_e\Omega$, then
\[
    u_1\le u_2\qquad\text{on }[0,T]\times\Omega .
\]
\end{lemma}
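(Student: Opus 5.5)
The plan is a Kato-type $\ell^1$ argument applied to the positive part of the difference. Because $\Omega$ is finite, every sum over $\Omega$ is finite and no integrability issues arise inside $\Omega$. The exterior sums converge by the $(\phi,\Omega)$-admissibility of $\eta_1(t,\cdot)$ and $\eta_2(t,\cdot)$.

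\emph{Setup.} Set $v:=u_1-u_2$ on $[0,T]\times\Omega$ and define
\[
F(t):=\sum_{x\in\Omega}\mu(x)\,v(t,x)_+ .
\]
Each $v(\cdot,x)$ is $C^1$ on $(0,T)$ and continuous on $[0,T]$, and $s\mapsto s_+$ is $1$-Lipschitz. Hence $F$ is continuous on $[0,T]$ and locally Lipschitz on $(0,T)$, with $F(0)=0$ by the hypothesis $u_1(0,\cdot)\le u_2(0,\cdot)$. For a $C^1$ function $v$ one has $\partial_t (v_+)=\one_{\{v>0\}}\partial_t v$ almost everywhere. Indeed, on $\{v=0\}$ the derivative of $v_+$, wherever it exists, vanishes at all non-isolated points of that set, and isolated points are countable. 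Writing $S_t:=\{x\in\Omega \mid v(t,x)>0\}$ and using the assumed differential inequality, we get for a.e. $t\in(0,T)$
\[
F'(t)\le-\sum_{x\in S_t}\mu(x)\Bigl(\Delta_\Omega^{\eta_1(t,\cdot)}\Phi u_1-\Delta_\Omega^{\eta_2(t,\cdot)}\Phi u_2\Bigr)(t,x).
\]

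\emph{Key sign computation.} Let $g:=\phi(u_1)-\phi(u_2)$ on $\Omega$ and $h:=\phi(\eta_1)-\phi(\eta_2)$ on $X\setminus\Omega$. Since $\phi$ is increasing, $g\ge0$ on $S_t$ and $g\le0$ on $\Omega\setminus S_t$. By the hypothesis $\eta_1\le\eta_2$ on $\partial_e\Omega$ we have $h\le0$ there; off $\Omega\cup\partial_e\Omega$ the weights $w(x,y)$ with $x\in\Omega$ vanish, so those $y$ do not contribute. The sum to be shown nonnegative equals
\[
\sum_{x\in S_t}\Bigl[\sum_{y\in\Omega}w(x,y)(g(x)-g(y))+\sum_{y\notin\Omega}w(x,y)(g(x)-h(y))+\kappa(x)g(x)\Bigr].
\]
It splits into three parts.
\begin{enumerate}[label=(\roman*)]
\item Pairs $x,y\in S_t$ cancel by the symmetry of $w$.
\item Pairs with $x\in S_t$ and $y\in\Omega\setminus S_t$ contribute $g(x)-g(y)\ge0$.
\item The exterior and killing terms are nonnegative, since $g(x)\ge0$, $h\le0$ and $\kappa\ge0$.
\end{enumerate}
Hence $F'\le0$ almost everywhere on $(0,T)$.

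\emph{Conclusion and main obstacle.} The delicate step is the time regularity: $F$ is only locally Lipschitz and need not be $C^1$, and no regularity is assumed at $t=0$. I would handle this by integrating on compact subintervals. For $0<s<t<T$, $F(t)-F(s)=\int_s^t F'\le0$. Letting $s\downarrow0$ and using continuity of $F$ at $0$ gives $F(t)\le F(0)=0$, and continuity at $T$ extends this to $t=T$. Thus $F\equiv0$, i.e.\ $u_1\le u_2$ on $[0,T]\times\Omega$.

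Note that only monotonicity and continuity of $\phi$ are used, not strict monotonicity or any Lipschitz bound. This is why the argument covers the fast diffusion range as well.
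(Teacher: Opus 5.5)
Your proof is correct and self-contained. The a.e. chain rule for $v_+$, the flux bookkeeping, and the passage $s\downarrow0$ all check out. In the flux bookkeeping, interior pairs in $S_t$ cancel by symmetry, and mixed pairs as well as exterior and killing terms have the right sign because $\phi$ is increasing and $\eta_1\le\eta_2$ on $\partial_e\Omega$. The passage $s\downarrow0$ uses only continuity at $t=0$.

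The paper does not prove this lemma itself; it imports it from \cite[Theorem~3.1 and Corollary~3.2]{bianchi2026bounded}. The closest in-paper argument is the global comparison principle in the proof of \textup{(i)}$\Rightarrow$\textup{(v)} of Theorem~\ref{thm:killing}. That proof uses the same positive-part mechanism:
\begin{itemize}
\item the scalar chain rule for $d_+$;
\item the inclusion $\{d>0\}\subseteq\{\phi(u)-\phi(v)\ge0\}$;
\item the vertex-wise Kato inequality of Lemma~\ref{lem:positive-part-kato}.
\end{itemize}
On infinite $X$, however, one cannot sum over vertices. The sum $\sum_x\mu(x)(u-v)_+$ may be infinite, and the interior fluxes need not be absolutely summable, so they do not cancel. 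The paper therefore time-averages against $e^{-t}$ and controls $\phi(u)-\phi(v)$ by the concave modulus of Lemma~\ref{lem:modulus}. It then closes with the Omori--Yau principle through Lemma~\ref{lem:averaged-modulus}, which is exactly where \SCinf{} is needed. Theorem~\ref{thm:killing} shows that without \SCinf{} global comparison genuinely fails, so this is not a technicality.

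On a finite $\Omega$, your $\ell^1$ summation replaces all of this. Symmetry kills the interior fluxes, and the ordering of the exterior data handles the boundary flux. So neither a modulus of continuity nor any geometric hypothesis enters. This is why the finite-subgraph principle holds unconditionally and can serve as the building block for the exhaustion constructions.

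Two minor remarks. First, continuity of $\phi$ is not used anywhere in your argument; only its monotonicity is. Second, the a.e. chain rule has a shorter justification. The function $v_+$ is locally Lipschitz, hence differentiable a.e., and it attains its minimum $0$ at every zero of $v$. So its derivative vanishes at every zero where it exists, and there is no need to separate isolated from non-isolated zeros.
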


We complement the above with a well-known elliptic comparison principle.

\begin{lemma}[{\cite[Theorem~1.7]{KLW21}}]\label{lem:finite-elliptic-comparison}
Let $\Omega\subseteq X$ be finite.  Let $f,g\colon \Omega\to\R$ and let
$\xi,\eta \colon X\setminus\Omega\to\R$ be
$(\operatorname{id},\Omega)$-admissible exterior data.  Suppose that
$\xi\le\eta$ on $\partial_e\Omega$.  If
\[
    \Delta_\Omega^\xi f\leq\Delta_\Omega^\eta g
    \qquad\text{on }\Omega,
\]
then $f\le g$ on $\Omega$.
\end{lemma}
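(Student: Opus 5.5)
We prove the finite elliptic comparison principle by a finite maximum-principle argument applied to $h:=f-g$ on $\Omega$. First we rewrite the hypothesis in terms of $h$. For $x\in\Omega$, subtracting the two expressions for $\Delta_\Omega^\xi f(x)$ and $\Delta_\Omega^\eta g(x)$ gives
\[
    \frac1{\mu(x)}\sum_{y\in\Omega}w(x,y)(h(x)-h(y))
    +\frac1{\mu(x)}\sum_{y\notin\Omega}w(x,y)\bigl(h(x)-(\xi(y)-\eta(y))\bigr)
    +\frac{\kappa(x)}{\mu(x)}h(x)\le0 .
\]
All sums converge absolutely by admissibility of $\xi$ and $\eta$ and the finiteness of $\Omega$. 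The difference $\xi-\eta$ is only needed on $\partial_e\Omega$, since $w(x,y)=0$ for $x\in\Omega$ and $y\notin\Omega\cup\partial_e\Omega$. On $\partial_e\Omega$ we have $\xi-\eta\le 0$. So $h$ satisfies $\Delta_\Omega^{\zeta}h\le0$ on $\Omega$ with an exterior datum $\zeta:=\xi-\eta\le0$ on $\partial_e\Omega$.

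Now suppose, for contradiction, that $M:=\max_\Omega h>0$; the maximum exists because $\Omega$ is finite. Let $x_0\in\Omega$ be a point with $h(x_0)=M$. At $x_0$, every term in the inequality above is nonnegative:
\begin{itemize}
    \item $h(x_0)-h(y)\ge0$ for $y\in\Omega$;
    \item $h(x_0)-\zeta(y)\ge M>0$ for $y\in\partial_e\Omega$;
    \item $\kappa(x_0)h(x_0)\ge0$.
\end{itemize}
Since the sum is $\le0$, each term must vanish. Hence $h(y)=M$ for every neighbor $y\in\Omega$ of $x_0$, and $x_0$ has no neighbor in $\partial_e\Omega$, because such a neighbor would contribute the strictly positive term $w(x_0,y)(h(x_0)-\zeta(y))\ge w(x_0,y)M$.

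Propagating this along paths, the set $\{h=M\}$ contains every vertex of $\Omega$ reachable from $x_0$ by paths inside $\Omega$, and none of these vertices has a neighbor outside $\Omega$. Call this set $C$. Then $C$ is a nonempty subset of $X$ with no edges to $X\setminus C$: it has no edges to $\Omega\setminus C$ by maximality of the reachable set, and none to $X\setminus\Omega$ by the previous step. Since $G$ is connected, $C=X$, which contradicts the finiteness of $\Omega$ in our infinite graph. (If $X$ were finite with $\Omega=X$, this degenerate case would have to be treated separately, e.g.\ using $\kappa\ne0$. In the intended setting $X$ is infinite, so this case does not arise.) Therefore $M\le0$, i.e.\ $f\le g$ on $\Omega$.

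The only delicate step is this connectivity argument. We do not assume $\Omega$ itself is connected, so we work with the connected component of $x_0$ within $\Omega$. What saves us is that any such component must have a neighbor outside $\Omega$: otherwise it would be a finite union of vertices disconnected from the rest of the infinite connected graph $X$. The rest is routine bookkeeping with absolutely convergent sums.
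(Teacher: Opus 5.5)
Your argument is correct. It is essentially the proof behind the cited minimum principle \cite[Theorem~1.7]{KLW21}; the paper gives no proof beyond that citation. You pass to $h=f-g$ with exterior datum $\xi-\eta\le0$ on $\partial_e\Omega$, then use connectedness to propagate a positive interior maximum through $\Omega$ until it would have to see $\partial_e\Omega$.

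Your caveat about finite $X$ is right and worth stating more sharply. For $\Omega=X$ finite and $\kappa\equiv0$ the lemma is actually false (take $f=g+1$), so it implicitly requires $X$ to be infinite. This holds in the paper's only application (under failure of \SCinf{}), since finite graphs always satisfy \SCinf{}.
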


The following result is a homogeneous specialization of
\cite[Theorem~3.4]{bianchi2026bounded}.  The construction there proceeds by a
finite-set exhaustion with constant exterior data, yields solutions
defined for all time and is valid for arbitrary killing $\kappa\ge0$.

\begin{theorem}\label{thm:extremal}
Let $u_0\in\ellinfty$.  Choose constants
\[
    A\le \min\{0,\inf_X u_0\}
    \qquad \text{and} \qquad
    B\ge \max\{0,\sup_X u_0\}.
\]
Then, there exist global bounded pointwise solutions $u^A,u^B$ of
the \ref{eq:GPME} with initial datum $u_0$ satisfying
\[
    A\le u^A\le u^B\le B
    \qquad\text{on }[0,\infty)\times X.
\]
For every $T>0$, the restriction of $u^A$ to $[0,T]\times X$ is minimal
among all bounded pointwise solutions on that time interval lying above $A$
while the restriction of $u^B$ is maximal among all bounded pointwise
solutions lying below $B$.  In particular, if $v$ is any bounded pointwise
solution on $[0,T]\times X$ with $A\le v\le B$, then
\begin{equation}\label{eq:trapping}
    u^A\le v\le u^B
    \qquad\text{on }[0,T]\times X.
\end{equation}
If $u_0\ge0$, the choice $A=0$ gives a global minimal positive solution $u^0$
satisfying
\[
    0\le u^0(t,x)\le \|u_0\|_\infty
    \qquad\text{on }[0,\infty)\times X.
\]
\end{theorem}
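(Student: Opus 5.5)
We construct $u^A$ and $u^B$ by finite-set exhaustion with constant exterior data, as in \cite[Theorem~3.4]{bianchi2026bounded}, and obtain both the trapping property and minimality/maximality from Lemma~\ref{lem:finite-comparison}.

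\emph{Approximating problems.} Fix an exhaustion $(\Om_n)$. For each $n$, solve the finite-dimensional Cauchy problem
\[
    \partial_t v_n+\Delta_{\Om_n}^{B}\Phi v_n=0 \quad\text{on } \Om_n, \qquad v_n(0)=u_0|_{\Om_n},
\]
with constant exterior datum $B$. This is an ODE system on $\R^{\Om_n}$ with continuous right-hand side, so Peano gives a local solution. Lemma~\ref{lem:finite-comparison} gives uniqueness, by comparing two solutions in both directions. The constants $A$ and $B$ are a sub- and a supersolution with exterior datum $B$: since $B\ge0$ and $\phi$ is increasing with $\phi(0)=0$, we have
\[
    \Delta^B_{\Om_n}\Phi B=\frac{\kappa}{\mu}\phi(B)\ge0 .
\]
Similarly, for the constant $A$ with exterior data $A\le B$, $\Delta^{B}_{\Om_n}\Phi A\le\frac{\kappa}{\mu}\phi(A)\le0$. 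Hence Lemma~\ref{lem:finite-comparison} gives $A\le v_n\le B$, and the solution is global.

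\emph{Monotonicity and limit.} On $\Om_n$, the function $v_{n+1}$ solves the problem with exterior datum $v_{n+1}\le B$ on $\Om_{n+1}\setminus\Om_n$ and $B$ elsewhere. Comparison therefore gives $v_{n+1}\le v_n$ on $\Om_n$, so $v_n\downarrow u^B$ pointwise. To pass to the limit in the equation, write it in integrated form,
\[
    v_n(t,x)=u_0(x)-\int_0^t\Delta^B_{\Om_n}\Phi v_n(s,x)\,\dd s .
\]
Since $|\Phi v_n|\le\max(|\phi(A)|,|\phi(B)|)$ and $\sum_y w(x,y)<\infty$, dominated convergence in both $y$ and $s$ lets us pass to the limit. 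Uniform boundedness of $\partial_t v_n(\cdot,x)$ gives equicontinuity, so $u^B(\cdot,x)$ is continuous and satisfies the integrated equation. The integrand is then continuous in $s$, so $u^B(\cdot,x)$ is $C^1$ and solves \ref{eq:GPME} pointwise. The construction of $u^A$ is symmetric, using exterior datum $A$ and an increasing limit.

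\emph{Extremality.} Let $v$ be a bounded pointwise solution on $[0,T]$ with $v\le B$. Restricted to $\Om_n$, $v$ solves the finite problem with exterior datum $v|_{X\setminus\Om_n}\le B$. The exterior data $v(t,\cdot)$ and $B$ are bounded, hence $(\phi,\Om_n)$-admissible. Lemma~\ref{lem:finite-comparison} then gives $v\le v_n$, and letting $n\to\infty$ gives $v\le u^B$. In the same way, $v\ge A$ gives $v\ge u^A$. Applying this to $v=u^A$ (which satisfies $u^A\le B$) yields $u^A\le u^B$, and hence \eqref{eq:trapping}. The case $u_0\ge0$, $A=0$ follows with $B=\|u_0\|_\infty$.

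The main obstacle I expect is the limit passage without local finiteness. There the uniform bound on $\Phi v_n$ and the summability of $w(x,\cdot)$ are what replace finite sums. I also need to check that Lemma~\ref{lem:finite-comparison} allows time-dependent exterior data that are only continuous in $t$, which it does as stated.
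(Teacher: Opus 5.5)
Your proposal is correct and takes essentially the same route the paper relies on. The paper defers the proof to \cite[Theorem~3.4]{bianchi2026bounded}, and in the proof of Theorem~\ref{thm:generalized-mass-characterization} it reuses the same zero-exterior-datum exhaustion with $0\le U_n\le U_{n+1}$. Like that construction, you use exhaustion with constant exterior data $A$ and $B$, take the a priori box $[A,B]$ and the monotonicity in $n$ from Lemma~\ref{lem:finite-comparison}, pass to the limit in the integrated equation by dominated convergence, and obtain extremality by comparing any bounded solution, viewed on $\Omega_n$ with its own exterior values, against the approximants.
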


\section{Uniqueness from stochastic completeness at infinity}\label{sec:unique-infinity}
In this section, we prove half of our first characterization by showing
that stochastic completeness at infinity implies uniqueness of bounded
solutions of the filtration equation.

Our uniqueness argument uses a nonlinear modulus estimate rather than a Lipschitz estimate; this is what allows for fast diffusion nonlinearities, which are not locally Lipschitz at the origin.  We first establish the auxiliary lemmas that we need. For the next Lemma~\ref{lem:modulus}, see also~\cite[Lemma~5.1]{GIMP}.

\begin{lemma}\label{lem:modulus}
Let $I\subset\R$ be a compact interval of length $l$ and let $\phi \colon I\to\R$ be continuous and increasing.  Then there exists a continuous, strictly increasing, concave function $\omega \colon [0,l]\to[0,\infty)$ with $\omega(0)=0$ such that
\begin{equation}\label{eq:modulus-bound}
    0\le \phi(b)-\phi(a)\le\omega(b-a)
    \qquad\text{whenever }a,b\in I
    \quad \text{with } a\le b.
\end{equation}
Consequently, $\omega^{-1}$ is convex and strictly increasing on $[0,\omega(l)]$.
\end{lemma}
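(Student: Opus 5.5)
We take the modulus of continuity of $\phi$ and replace it by its least concave majorant. Write $I=[\alpha,\alpha+l]$. If $l=0$ the statement is trivial, so assume $l>0$. For $r\in[0,l]$ set
\[
    \omega_0(r):=\sup\{\phi(b)-\phi(a) \mid a,b\in I,\ 0\le b-a\le r\}.
\]
Since $\phi$ is increasing, $\omega_0\ge0$, and $\omega_0$ is nondecreasing with $\omega_0(0)=0$. Because $I$ is compact, $\phi$ is uniformly continuous on $I$, so $\omega_0(r)\to0$ as $r\downarrow0$. The modulus is also subadditive: $\omega_0(r+s)\le\omega_0(r)+\omega_0(s)$, by splitting $[a,b]$ at an intermediate point. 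By construction, $\phi(b)-\phi(a)\le\omega_0(b-a)$ whenever $a\le b$ in $I$.

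Next, let $\hat\omega$ be the least concave majorant of $\omega_0$ on $[0,l]$, that is, the infimum of all affine functions $r\mapsto c+dr$ dominating $\omega_0$ on $[0,l]$. Then $\hat\omega$ is concave and $\hat\omega\ge\omega_0$. The remaining issues are continuity at $0$ and the value $\hat\omega(0)=0$. We use the standard fact that a nondecreasing, subadditive modulus that is continuous at $0$ satisfies $\hat\omega\le2\omega_0$; this is the step needing care. A simpler route avoids it. Given $\varepsilon>0$, pick $\delta$ with $\omega_0\le\varepsilon$ on $[0,\delta]$. By monotonicity $\omega_0\le\omega_0(l)$ everywhere, so the affine function $\varepsilon+(\omega_0(l)/\delta)\,r$ dominates $\omega_0$ on $[0,l]$. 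Hence $\hat\omega(0)\le\varepsilon$ for every $\varepsilon$, and therefore $\hat\omega(0)=0$.

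A concave function on $[0,l]$ is continuous on $(0,l)$. It is lower semicontinuous at the endpoints when defined as an infimum of affine functions. At $0$, the bound $0\le\hat\omega(r)\le\varepsilon+(\omega_0(l)/\delta)r$ gives continuity. At $l$, concavity gives $\hat\omega(r)\ge$ the chord value between $\hat\omega(0)$ and $\hat\omega(l)$, which yields lower semicontinuity. For upper semicontinuity at $l$, if needed we replace $\hat\omega(l)$ by $\lim_{r\uparrow l}\hat\omega(r)$; this limit exists by concavity and monotonicity. This replacement preserves concavity and the majorant property, because $\omega_0(l)\le\omega_0(l-\rho)+\omega_0(\rho)\to\lim_{r\uparrow l}\omega_0(r)$ by subadditivity.

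Moreover, $\hat\omega$ is nondecreasing. Indeed, $\omega_0$ is nondecreasing, so the majorant can be taken over affine functions with nonnegative slope. Alternatively, note that a concave function that is $\ge\omega_0$ and decreases somewhere can be flattened beyond its maximum point.

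To obtain strict monotonicity, set $\omega(r):=\hat\omega(r)+r$. This function is continuous, concave, strictly increasing, satisfies $\omega(0)=0$, and dominates $\omega_0$, which proves \eqref{eq:modulus-bound}. Finally, $\omega$ is a strictly increasing continuous bijection of $[0,l]$ onto $[0,\omega(l)]$. The inverse of an increasing concave function is convex, as one sees by reflecting the graph across the diagonal, which maps the hypograph to the epigraph. Hence $\omega^{-1}$ is convex and strictly increasing.

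The main obstacle is continuity at $r=0$ of the concave majorant. It is handled by the affine-domination argument above, which uses only uniform continuity of $\phi$ and the boundedness of $\omega_0$ on $[0,l]$.
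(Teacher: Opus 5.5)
Your proof is correct and follows the paper's argument essentially step for step: the same modulus $m$ (your $\omega_0$), its least concave majorant, the affine majorant $\varepsilon+m(l)r/\delta$ giving $\hat\omega(0)=0$ and continuity at $0$, and the perturbation $\hat\omega(r)+\varepsilon_0 r$ for strict monotonicity. The only divergence is at $r=l$: the paper combines the chord lower bound with the monotonicity bound $\hat\omega(r)\le\hat\omega(l)$, whereas you replace $\hat\omega(l)$ by its left limit. That replacement is harmless but unnecessary, since an infimum of affine functions is automatically upper semicontinuous (not lower, as you wrote).
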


\begin{proof}
Let
\[
    m(r):=\sup\{\phi(b)-\phi(a) \mid a,b\in I,\ 0\le b-a\le r\}
    \qquad \text{for }0\le r\le l.
\]
Then $m(0)=0$, $m$ is increasing, and $m(r)\downarrow0$ as $r\downarrow0$ by the uniform continuity of $\phi$ on $I$.  Splitting an increment of length at most $r+s$ into two increments of lengths at most $r$ and $s$ shows that $m$ is subadditive; together with the previous properties this makes $m$ continuous on $[0,l]$.  Let $\hat\omega$ be the least concave majorant of $m$ on $[0,l]$, which is finite, increasing,
and concave.  Given $\varepsilon>0$, choose $\delta>0$ with $m\le\varepsilon$ on $[0,\delta]$, then the concave function $r\mapsto\varepsilon+m(l)\,r/\delta$ majorizes $m$, so $\hat\omega(0)\le\varepsilon$.  Hence, $\hat\omega(0)=0$.

We now show that $\hat\omega$ is continuous on $[0,l]$.
By the minimality of $\hat\omega$, the same affine majorant gives
\[
0\le \hat\omega(r)\le \varepsilon+\frac{m(l)}{\delta}r
\qquad \text{for all  } 0\le r\le l.
\]
Thus, $\hat\omega(r)\to0=\hat\omega(0)$ as $r\downarrow0$.  Finally, fixing $a<l$, concavity gives, for $a<r<l$,
\[
\hat\omega(r)\ge
\frac{l-r}{l-a}\hat\omega(a)+\frac{r-a}{l-a}\hat\omega(l),
\]
while monotonicity gives $\hat\omega(r)\le\hat\omega(l)$. Hence, $\hat\omega(r)\to\hat\omega(l)$ as $r\uparrow l$. Therefore, $\hat\omega$ is continuous on $[0,l]$.

Now, for any fixed $\varepsilon_0>0$, the function $\omega(r):=\hat\omega(r)+\varepsilon_0r$ is continuous, strictly increasing, concave, vanishes at $0$ and satisfies~\eqref{eq:modulus-bound}.  The final assertion follows from the elementary fact that the inverse of a continuous, strictly increasing, concave function vanishing at $0$ is convex and strictly increasing.
\end{proof}

The following lemma is the key tool for proving uniqueness of solutions under
stochastic completeness at infinity.
It will be used repeatedly in what follows.
\begin{lemma}\label{lem:averaged-modulus}
Assume that $G$ satisfies \SCinf{}.  Let $T,R>0$
and let $q,\rho \colon [0,T]\times X\to[0,\infty)$ be bounded functions such
that, for every $x\in X$,
\[
    q(\cdot,x),\rho(\cdot,x)\in C([0,T])
    \qquad \text{and} \qquad
    q(\cdot,x)\text{ is locally absolutely continuous on }(0,T).
\]
Suppose that
\[
    q(0,x)=0,
    \qquad
    \partial_tq(t,x)+\Delta\rho(t,x)\le0
    \quad\text{for every }x\in X\text{ and almost every }t\in(0,T)
\]
and that there exists a continuous, strictly increasing, concave function
$\omega \colon [0,R]\to[0,\infty)$ with $\omega(0)=0$ such that
\[
    0\le q\le R
    \qquad \text{and} \qquad
    0\le\rho\le\omega(q).
\]
Then, $q=\rho=0$ on $[0,T]\times X$.
\end{lemma}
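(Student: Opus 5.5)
The plan is to eliminate time by integrating against a suitable weight. The resulting stationary inequality for a bounded nonnegative function can then be contradicted with the weak Omori--Yau maximum principle, which is equivalent to \SCinf{} by Proposition~\ref{prop:SCinf-characterizations}\,(v).

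\textbf{Time-integrated quantities.} Set
\[
Q(x):=\int_0^T q(t,x)\dd t,\qquad P(x):=\int_0^T (T-t)\,\rho(t,x)\dd t .
\]
Both are bounded and nonnegative, so in particular $P\in\ell^\infty(X)\subseteq\mathcal F$.

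\textbf{Step 1: the stationary inequality.} Multiply $\partial_t q+\Delta\rho\le0$ by $T-t\ge0$ and integrate over $[s,T-s]$. Integrate by parts in the $q$-term, which is legitimate because $q(\cdot,x)$ is absolutely continuous on compact subintervals of $(0,T)$. Then let $s\downarrow0$, using continuity of $q$ on $[0,T]$ and $q(0,x)=0$. Both boundary terms vanish: at $t=T$ because of the factor $T-t$, and at $t=0$ because $q(0,x)=0$. This gives $\int_0^T(T-t)\partial_tq\dd t=Q(x)$.

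For the $\Delta\rho$-term, I interchange $\Delta$ with the time integral by Fubini. This is justified since $\rho$ is bounded, $\sum_y w(x,y)<\infty$, and $\kappa(x)<\infty$. The outcome is
\[
Q+\Delta P\le 0\qquad\text{on }X .
\]

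\textbf{Step 2: a lower bound of $Q$ in terms of $P$.} Use $\rho\le\omega(q)$ together with Jensen's inequality for the concave $\omega$, with respect to the probability measure $\frac{2}{T^2}(T-t)\dd t$ on $[0,T]$. Using also that $\omega$ is increasing and $T-t\le T$, this gives
\[
\frac{2}{T^2}P\le \omega\Bigl(\frac{2}{T^2}\int_0^T(T-t)q\dd t\Bigr)\le\omega\Bigl(\frac{2}{T}Q\Bigr).
\]
The argument of $\omega$ lies in $[0,R]$, since $\frac{2}{T^2}\int_0^T (T-t) q \dd t$ is a probability average of values of $q$ in $[0,R]$; if $2Q/T$ exceeds $R$, I replace it by $\min\{2Q/T,R\}$, which is harmless. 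Inverting with the strictly increasing $\omega^{-1}$ from Lemma~\ref{lem:modulus} yields
\[
Q\ge \tfrac{T}{2}\,\omega^{-1}\bigl(\min\{2P/T^2,\omega(R)\}\bigr),
\qquad\text{hence}\qquad
\Delta P\le -\tfrac{T}{2}\,\omega^{-1}\bigl(\min\{2P/T^2,\omega(R)\}\bigr).
\]

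\textbf{Step 3: Omori--Yau contradiction.} Suppose $p^*:=\sup_X P>0$. On the super-level set $X_{p^*/2}$ we have $P>p^*/2$. Since $\omega^{-1}$ is strictly increasing with $\omega^{-1}(0)=0$, Step 2 gives
\[
\Delta P\le -c<0\quad\text{on }X_{p^*/2},\qquad c:=\tfrac{T}{2}\,\omega^{-1}\bigl(\min\{p^*/T^2,\omega(R)\}\bigr)>0 .
\]
This uniform negative bound contradicts the weak Omori--Yau principle, which requires $\sup_{X_{p^*/2}}\Delta P\ge0$. Hence $P\equiv0$.

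\textbf{Step 4: conclusion.} From $P\equiv0$, $\rho\ge0$ and continuity of $\rho(\cdot,x)$, we get $\rho\equiv0$ on $[0,T]$. Then $\Delta\rho=0$, so $\partial_tq\le0$ almost everywhere. Local absolute continuity gives $q(t,x)\le q(s,x)$ for $0<s<t<T$. Letting $s\downarrow0$ gives $q(t,x)\le q(0,x)=0$, and continuity extends this to $t=T$. Together with $q\ge0$, this gives $q\equiv0$ on $[0,T]$.

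\textbf{Main obstacle.} The delicate point is Step 2: the weight must simultaneously make the boundary terms vanish (the factor $T-t$ kills the endpoint at $T$, and $q(0)=0$ kills the endpoint at $0$) and be compatible with Jensen's inequality. The constraint that only $\omega$ on $[0,R]$ is available is handled by the truncation $\min\{\cdot,\omega(R)\}$. The remaining steps are routine.
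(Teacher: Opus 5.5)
Your proof is correct and follows essentially the paper's argument. You take a weighted time average, integrate by parts using $q(0,\cdot)=0$ to get $Q+\Delta P\le 0$, combine Jensen with $\rho\le\omega(q)$ to bound $Q$ below by $\omega^{-1}$ of the averaged $\rho$, and contradict the weak Omori--Yau principle.

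The differences are cosmetic. The paper uses the weight $e^{-t}/(1-e^{-T})$, which equals minus its own derivative, so $q$ and $\rho$ are averaged against the same probability measure. Jensen for the convex $\omega^{-1}$ then applies directly, without your comparison $T-t\le T$. In exchange, the paper's endpoint term at $T$ does not vanish but has a favorable sign and is dropped.
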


\begin{proof}
Set
\[
    \eta(t):=\frac{e^{-t}}{1-e^{-T}}
    \qquad \text{for } 0\le t\le T
\]
and define
\[
    W(x):=\int_0^T\eta(t)\rho(t,x)\dd t
    \qquad \text{and} \qquad
    Q(x):=\int_0^T\eta(t)q(t,x)\dd t.
\]
Then, $\eta(t)\dd t$ is a probability measure on $[0,T]$ and $W$ and $Q$ are bounded
and positive with $0\le W\le\omega(R)$.  In particular,
$W\in\ellinfty\subseteq\mathcal F$.  Boundedness of $\rho$ and the
summability of $w(x,\cdot)$ permit the interchange of $\Delta$ with the
time integral. Hence,
$$\Delta W(x)=\int_0^T\eta(t)\Delta\rho(t,x)\dd t\quad \text{for }x\in X.$$

Fix $x\in X$ and $0<\varepsilon<T-\delta<T$ where $\delta \in (0,T)$.  Since
$q(\cdot,x)$ is absolutely continuous on
$[\varepsilon,T-\delta]$, multiplying the differential inequality by
$\eta$, integrating by parts, and using that $\partial_t \eta(t)=-\eta(t)$ gives
\[
\begin{aligned}
    \int_\varepsilon^{T-\delta}\eta(t)\Delta\rho(t,x)\dd t
    &\le
    -\int_\varepsilon^{T-\delta}\eta(t)\partial_tq(t,x)\dd t\\
    &=
    -\eta(T-\delta)q(T-\delta,x)
    +\eta(\varepsilon)q(\varepsilon,x)
    -\int_\varepsilon^{T-\delta}\eta(t)q(t,x)\dd t .
\end{aligned}
\]
For fixed $x$, the function $\Delta\rho(\cdot,x)$ is bounded on
$[0,T]$.  Letting first $\delta\downarrow0$ and then
$\varepsilon\downarrow0$, using the continuity of $q(\cdot,x)$ and
$q(0,x)=0$, yields
\[
    \Delta W(x)
    \le-\eta(T)q(T,x)-Q(x).
\]
Consequently,
\begin{equation}\label{eq:averaged-modulus-DW}
    -\Delta W\ge Q.
\end{equation}

Since $\omega^{-1}$ is convex and strictly increasing on
$[0,\omega(R)]$, the modulus bound and Jensen's inequality give
\[
    \omega^{-1}(W(x))
    \le
    \int_0^T\eta(t)\omega^{-1}(\rho(t,x))\dd t
    \le Q(x).
\]
Thus,
\[
    -\Delta W\ge\omega^{-1}(W).
\]
If $W\not =0$, then, for every
$\gamma\in(0,W^*)$ where $W^*:=\sup_XW$, on
$X_\gamma:=\{x \in X \mid W(x)>W^*-\gamma\}$ one has
\[
    \Delta W
    \le-\omega^{-1}(W)
    <-\omega^{-1}(W^*-\gamma)<0.
\]
This contradicts \SCinf{} as characterized by the Omori--Yau maximum principle
in Proposition~\ref{prop:SCinf-characterizations}~(v).
Hence, $W=0$ and
\eqref{eq:averaged-modulus-DW} then gives $Q=0$.
Since $\eta>0$ and $q$ and $\rho$ are positive and continuous in time,
$Q=W=0$ implies $q=\rho=0$.
\end{proof}

We next prove the main result of this section. It gives uniqueness of bounded
solutions to the filtration equation under the stochastic completeness assumption.

\begin{theorem}[\SCinf{} implies uniqueness]\label{thm:killing-unique}
Assume that $G$ satisfies \SCinf{}.  Then, for every $T>0$, every $u_0\in\ellinfty$
and every $\phi \in \mathcal{I}$, the \ref{eq:GPME} with initial datum $u_0$ has at most one bounded pointwise solution on $[0,T]\times X$.
\end{theorem}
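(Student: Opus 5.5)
We reduce uniqueness to Lemma~\ref{lem:averaged-modulus}, using the extremal solutions of Theorem~\ref{thm:extremal} to order the two solutions. Fix $T>0$, $u_0\in\ellinfty$, $\phi\in\mathcal I$, and let $v_1,v_2$ be bounded pointwise solutions on $[0,T]\times X$ with initial datum $u_0$. Choose constants $A\le\min\{0,\inf u_0,\inf v_1,\inf v_2\}$ and $B\ge\max\{0,\sup u_0,\sup v_1,\sup v_2\}$, and let $u^A\le u^B$ be the solutions from Theorem~\ref{thm:extremal}. By the trapping property \eqref{eq:trapping}, $u^A\le v_i\le u^B$ on $[0,T]\times X$ for $i=1,2$. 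It therefore suffices to show that $u^A=u^B$ on $[0,T]\times X$.

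Set $q:=u^B-u^A$ and $\rho:=\phi(u^B)-\phi(u^A)$. Both functions are bounded and take values $\ge0$, since $\phi$ is increasing. For each $x$, $q(\cdot,x)\in C([0,T])\cap C^1((0,T))$ and $\rho(\cdot,x)\in C([0,T])$. We have $q(0,\cdot)=0$, and subtracting the two equations gives, by linearity of $\Delta$ on bounded functions,
\[
\partial_t q+\Delta\rho=0\qquad\text{on }(0,T)\times X .
\]
Take $I=[A,B]$ and $R=l=B-A$. Lemma~\ref{lem:modulus} then gives a continuous, strictly increasing, concave $\omega$ on $[0,R]$ with $\omega(0)=0$ and $0\le\phi(b)-\phi(a)\le\omega(b-a)$ for $A\le a\le b\le B$. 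Applying this with $a=u^A(t,x)$ and $b=u^B(t,x)$ yields $0\le\rho\le\omega(q)$ and $0\le q\le R$.

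All hypotheses of Lemma~\ref{lem:averaged-modulus} are now in place; local absolute continuity follows from $C^1$ regularity on $(0,T)$. The lemma gives $q=0$, hence $u^A=u^B$ and so $v_1=v_2$.

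The substantive difficulty, namely handling non-Lipschitz $\phi$ and the killing term, is already absorbed by Lemmas~\ref{lem:modulus} and \ref{lem:averaged-modulus}. The only real point is to make sure $q$ and $\rho$ have a sign. Comparing $v_1$ and $v_2$ directly would give differences of no fixed sign, and $\omega(|q|)$ alone would not yield a one-sided inequality. Routing through the extremal pair $u^A\le u^B$ avoids this. It also uses that the minimality and maximality statements in Theorem~\ref{thm:extremal} hold on every finite interval $[0,T]$ for any admissible choice of $A$ and $B$, which permits enlarging them to cover the ranges of both $v_i$.
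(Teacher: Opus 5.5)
Your proposal is correct and follows the paper's proof essentially step for step. You trap $v_1,v_2$ between the extremal solutions $u^A\le u^B$, set $q=u^B-u^A$ and $\rho=\phi(u^B)-\phi(u^A)$, and apply Lemma~\ref{lem:modulus} on $[A,B]$ and Lemma~\ref{lem:averaged-modulus} with $R=B-A$. The one detail the paper adds is the degenerate case $A=B$ (which forces $v_1=v_2=0$); you should exclude it, e.g.\ by choosing $A<B$, because Lemma~\ref{lem:averaged-modulus} requires $R>0$.
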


\begin{proof}
Let $v_1,v_2$ be bounded pointwise solutions of the \ref{eq:GPME} on $[0,T]\times X$ with the same initial datum $u_0$.  Choose constants
\[
    A\le\min\Bigl\{0,\inf_{[0,T]\times X}v_1,\inf_{[0,T]\times X}v_2\Bigr\}
    \qquad \text{and} \qquad
    B\ge\max\Bigl\{0,\sup_{[0,T]\times X}v_1,\sup_{[0,T]\times X}v_2\Bigr\}
\]
so that, in particular, $A\le\min\{0,\inf_Xu_0\}$ and $B\ge\max\{0,\sup_Xu_0\}$.
If $A=B$, then necessarily $A=B=0$, and the preceding bounds give
$v_1=v_2=0$. Thus, there is nothing to prove. Henceforth, assume $A<B$.
Let $u^A\le u^B$ be the extremal solutions of Theorem~\ref{thm:extremal} for this choice.  By the trapping property \eqref{eq:trapping} in Theorem~\ref{thm:extremal},
$u^A\le v_i\le u^B$ for $i=1,2$, so it suffices to show that $u^A=u^B$.

Set $I:=[A,B]$ and put
\[
    q:=u^B-u^A\ge0
    \qquad \text{and} \qquad
    \rho:=\phi(u^B)-\phi(u^A)\ge0 .
\]
Subtracting the equations satisfied by $u^A$ and $u^B$ and using the linearity of $\Delta$ on $\mathcal F$ gives
\begin{equation}\label{eq:q-rho-equation}
    \partial_t q+\Delta\rho=0
    \qquad \text{and } \qquad q(0,\cdot)=0 .
\end{equation}
Let $\omega$ be the concave modulus of $\phi|_I$ from Lemma~\ref{lem:modulus}.  Then,
\begin{equation}\label{eq:rho-modulus}
    0\le\rho\le\omega(q)
\end{equation}
and, in particular, $0\le\rho\le\omega(l)$, where $l$ is the length of $I$,
since $0\le q\le l$ and $\omega$ is increasing.
The functions $q$ and $\rho$ satisfy all the hypotheses of
Lemma~\ref{lem:averaged-modulus} with $R=l$; note that
$q(\cdot,x)$ is $C^1$ on $(0,T)$ for every $x$, hence locally absolutely continuous.  Therefore,
$q=\rho=0$, so $u^A=u^B$ and, therefore, $v_1=v_2$.
\end{proof}

\section{Nonuniqueness from incompleteness at infinity}\label{sec:nonunique-infinity}
In this section we prove the other half of our first main characterization by showing
that when stochastic completeness at infinity fails,
there exist infinitely many bounded solutions of the filtration equation.
In particular, when stochastic completeness at infinity fails, one can impose different constant boundary values at infinity and pass to the exhaustion limit.  A barrier argument, localized on the region where the defect function is close to its supremum, separates the resulting limits.
This suffices to establish non-uniqueness of solutions.

Throughout this section, we write
\[
    K:=\frac{\kappa}{\mu}
\]
so that $\Delta c=cK$ for any constant $c \in \R$.

We first need a variant on our various criteria for stochastic completeness at infinity.
\begin{lemma}\label{lem:defect-infinity}
Assume that $G$ does not satisfy \SCinf{}.  Then there exists $U\in\ellinfty$ such that
\[
    0\le U\le1,
    \qquad
    \sup_XU=1
    \qquad \text{and} \qquad
    (\Delta +1)U\le0.
\]
Consequently, the function $V:=2U-1$ satisfies
\[
    -1\le V\le1,
    \qquad
    \sup_XV=1
    \qquad \text{and} \qquad
    \Delta V\le -(V+1)-K.
\]
In particular, for every $\varepsilon\in(0,1)$, the super-level set $\{x \in X \mid V(x)>1-\varepsilon\}$ is nonempty and on it
\[
    -\Delta V\ge V+1+K>2-\varepsilon .
\]
\end{lemma}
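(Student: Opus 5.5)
Since $G$ fails \SCinf{}, Proposition~\ref{prop:SCinf-characterizations}~(iii) with $\beta=1$ gives a nontrivial $0\le h\in\ellinfty$ with $(\Delta+1)h\le0$ pointwise. Set $U:=h/\sup_X h$, which makes sense because $h\ge0$ is bounded and not identically zero. Then $0\le U\le1$ and $\sup_XU=1$. Since $\Delta$ is linear on $\mathcal F$ and $\sup_X h>0$, we still have $(\Delta+1)U\le0$. This settles the first part. The only point to check is that (iii) is available in exactly this form, with a nonnegative bounded subsolution and $\beta=1$; the "one (equivalently, all) $\beta$" clause ensures this.

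For $V:=2U-1$, the bounds $-1\le V\le 1$ and $\sup_XV=1$ follow immediately. For the differential inequality, use linearity together with $\Delta c=cK$ for constants, which holds because constants lie in $\ellinfty\subseteq\mathcal F$ and the difference terms vanish. This gives
\[
\Delta V=2\Delta U-\Delta 1=2\Delta U-K\le -2U-K=-(V+1)-K,
\]
since $2U=V+1$.

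For the final claim, fix $\varepsilon\in(0,1)$. Because $\sup_XV=1>1-\varepsilon$, the super-level set $\{V>1-\varepsilon\}$ is nonempty. On this set,
\[
-\Delta V\ge V+1+K>2-\varepsilon+K\ge 2-\varepsilon,
\]
using $K=\kappa/\mu\ge0$.

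No step is a real obstacle. The only care needed is to invoke the correct known characterization and to keep track of the killing contribution $\Delta 1=K$, which is what produces the extra $-K$ in the inequality for $V$.
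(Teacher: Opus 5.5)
Your proposal is correct and follows the paper's proof essentially step for step. You obtain a nontrivial bounded nonnegative subsolution from Proposition~\ref{prop:SCinf-characterizations}~(iii) with $\beta=1$, normalize it by its supremum, and use $\Delta 1=K$ to get $\Delta V\le-(V+1)-K$, with the super-level set claim following from $\sup_XV=1$ and $K\ge0$.
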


\begin{proof}
By the subsolution
formulation recalled in Proposition~\ref{prop:SCinf-characterizations}~(iii) with $\beta=1$, failure of stochastic completeness at infinity gives a nontrivial $0\le U'\in\ellinfty$ satisfying
\[
    (\Delta +1)U'\le0.
\]
Normalize $U'$ to get $U$:
\[
    U:=\frac{U'}{\sup_XU'}.
\]
Then, $0\le U\le1$, $\sup_XU=1$ and $(\Delta +1)U\le0$ as claimed.

Since $\Delta 1=K$, for $V=2U-1$ we have
\[
    \Delta V=2\Delta U-\Delta 1
    \le -2U-K
    =-(V+1)-K.
\]
The remaining assertions follow from $\sup_XV=1$.
\end{proof}

Next, we prove the main result of this section. It gives the
existence of infinitely many bounded solutions of the filtration
equation in the case of the lack of stochastic completeness at infinity.

\begin{theorem}[Failure of \SCinf{} gives infinitely many solutions]\label{thm:killing-nonlinear-nonunique}
Assume that $G$ does not satisfy \SCinf{}.  Then, for
every $u_0\in\ellinfty$ and every $\phi \in \mathcal{I}$,
there exists an infinite family
$\{u^\alpha\}_{\alpha}$ of global bounded pointwise solutions
of the \ref{eq:GPME} with initial datum $u_0$ such that, for every $S>0$,
the restrictions $u^\alpha|_{[0,S]\times X}$ are pairwise distinct.
In particular, for every $T>0$, the Cauchy problem of the \ref{eq:GPME} has infinitely many
bounded pointwise solutions on $[0,T]\times X$.
\end{theorem}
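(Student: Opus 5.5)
The plan is to build, for each $u_0\in\ellinfty$ and $\phi\in\mathcal I$, a one-parameter family of bounded solutions. Each member is obtained as an exhaustion limit with a different \emph{constant value prescribed at infinity}. The members are then told apart by a barrier built from the defect function $V$ of Lemma~\ref{lem:defect-infinity}.

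\textbf{Step 1: choosing the side.} Since $\phi$ is nonconstant and increasing, it is nonconstant on $[M,\infty)$ for every $M$, or nonconstant on $(-\infty,M]$ for every $M$. (If it were constant on some $[m,\infty)$ and on some $(-\infty,M]$ with $M\ge m$, it would be constant.) By the symmetry $u\mapsto -u$, $\phi\mapsto-\phi(-\cdot)$, I assume the first alternative with $M:=\max\{0,\sup_Xu_0\}$. I then fix an infinite set of levels $c_\alpha\ge M$ with the values $\phi(c_\alpha)$ pairwise distinct.

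\textbf{Step 2: exhaustion solutions.} Fix an exhaustion $(\Om_n)$. For each $\alpha$ and $n$, solve the finite ODE system
\[
(\partial_t+\Delta^{c_\alpha}_{\Om_n}\Phi)u_n=0 \quad\text{on } \Om_n,\qquad u_n(0)=u_0,
\]
with constant exterior datum $c_\alpha$. Existence comes from the finite-subgraph theory behind Theorem~\ref{thm:extremal}. Lemma~\ref{lem:finite-comparison} is then used three times:
\begin{itemize}
\item it gives $\min\{0,\inf u_0\}\le u_n\le c_\alpha$, comparing with the constants, which are supersolutions since $\Delta c=cK\ge0$ for $c\ge0$;
\item it gives that $u_n$ is decreasing in $n$, because $u_{n+1}\le c_\alpha$ on $\partial_e\Om_n$;
\item it gives that $u_n$ is increasing in $\alpha$.
\end{itemize}
Let $u^\alpha:=\lim_n u_n$. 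I pass to the limit in the integral form $u_n(t,x)=u_0(x)-\int_0^t\Delta\Phi u_n(s,x)\dd s$, using dominated convergence: the data are bounded, $w(x,\cdot)$ is summable and $\phi$ is continuous. This shows that each $u^\alpha$ is a global bounded pointwise solution, and that $u^\alpha\le u^\beta$ whenever $c_\alpha<c_\beta$.

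\textbf{Step 3: separation (the main obstacle).} Given $c_\alpha<c_\beta$ and $S>0$, I need a point $(t,x)\in(0,S]\times X$ with $u^\beta(t,x)>c_\alpha$. This gives $u^\beta\neq u^\alpha$ on $[0,S]\times X$, since $u^\alpha\le c_\alpha$. The tool is a lower barrier on every $\Om_n$, defined at the level of $\Phi$. Put $p_1:=\phi(c_\alpha)$, $p_2:=\phi(c_\beta)$ and $\theta(t):=\min\{1,t/\tau\}$, and take $z$ with
\[
\Phi z(t,x)=p_1^-(t)+(p_2-p_1)\,\theta(t)\,\tfrac{V(x)+1}{2}
\]
after a suitable truncation, via a right-continuous generalized inverse of $\phi$.

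The estimate $\Delta V\le-(V+1)-K$ makes $\Delta\Phi z$ strictly negative where $V$ is close to $1$. Since $p_2$ is never exceeded, $z\le c_\beta$ on the exterior. I would start $z$ below $u_0$, so that the time derivative is absorbed for $\tau$ small, or otherwise $\partial_tz\le -\Delta\Phi z$. Comparison would then give $u_n\ge z$, hence $u^\beta\ge z$. On $\{V>1-\varepsilon\}$ one has $\Phi z>p_1$ at time $t=\tau\le S$, so $z>c_\alpha$ there.

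The delicate points are three. First, the inverse of $\phi$ may jump, so $z$ need not be $C^1$ in $t$; I would replace $z$ by a time-regularized version, or compare via Lemma~\ref{lem:averaged-modulus}-style integrated inequalities instead. Second, the sign of the killing contribution $K\,\Phi z$ must be controlled, which is where $p_1\ge\phi(0)=0$ enters. Third, the barrier must lie below $u_0$ at $t=0$ while still reaching above $c_\alpha$ by time $S$.

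If this direct barrier fails, the fallback is to argue by contradiction. Suppose $u^\alpha=u^\beta$ on $[0,S]\times X$. Consider the time-averaged quantity
\[
W(x)=\int_0^S\eta\,\bigl(\phi(c_\beta)-\Phi u^\beta\bigr)\dd t ,
\]
relative to the constant, and compare it with the $1/\lambda$-harmonic-type defect $U$ through Lemma~\ref{lem:finite-elliptic-comparison} on each $\Om_n$. This would show that $W$ must stay below a multiple of $1-U$, and hence that $\sup_X U<1$, a contradiction.
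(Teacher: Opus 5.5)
There are two genuine gaps, and the first already breaks the statement for part of $\mathcal I$. The dichotomy in your Step~1 is false. Knowing that $\phi$ is constant on some $[m,\infty)$ and on some $(-\infty,M]$ does not let you arrange $M\ge m$. The clamp $\phi(s)=\max\{-1,\min\{s,1\}\}$ lies in $\mathcal I$ and is constant on both $(-\infty,-1]$ and $[1,\infty)$. Take any $u_0$ with $\inf_X u_0\le-1$ and $\sup_X u_0\ge1$. Every admissible level $c\ge\max\{0,\sup_X u_0\}$ then has $\phi(c)=1$. Since $\Delta^{c}_{\Om_n}\Phi u=\Delta^{\phi(c)}_{\Om_n}(\phi\circ u)$ depends on $c$ only through $\phi(c)$, all your finite problems coincide, and hence so do their limits. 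The mirror side likewise gives a single solution, so your family has at most two members. (Where your levels do exist, Step~2 is sound: the limits are exactly the maximal solutions $u^B$, $B=c_\alpha$, of Theorem~\ref{thm:extremal}.) In general you must allow exterior values $\alpha$ between $\inf_X u_0$ and $\sup_X u_0$, one per value of $\phi$, as the paper does. Then $u_0\not\le\alpha$, and the approximants are neither monotone in $n$ nor bounded by $\alpha$. The limit must therefore be extracted from the uniform-in-$n$ Lipschitz bound on $u_n(\cdot,x)$ by Arzel\`a--Ascoli and a diagonal argument. The separation can also no longer rest on the free bound $u^\alpha\le c_\alpha$: you need two-sided control of the time average of $\phi(u^\alpha)$ near $\phi(\alpha)$.

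Second, Step~3, the heart of the theorem, is not established. The barrier $z$ is never actually defined ($p_1^-(t)$ is unspecified), and the difficulties you list are obstructions, not technicalities. A generalized inverse of $\phi$ may jump, so $z$ need not be continuous in $t$ and Lemma~\ref{lem:finite-comparison} does not apply. Any positive constant part of $\Phi z$ produces a wrong-sign term proportional to the possibly unbounded $K$. Nothing shows that $-\Delta\Phi z$ dominates the large $\partial_t z$ needed to climb from below $u_0$ to above $c_\alpha$ on all of $\Om_n$, including where $V$ is small.

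The missing idea is to integrate in time instead of building a parabolic barrier. For the approximants, $F_n:=\int_0^S\phi(u_n)\dd t$ satisfies $\Delta^{S\phi(\alpha)}_{\Om_n}F_n=u_0-u_n(S)$. This is an elliptic identity at the level of $\Phi$ whose right-hand side is bounded by the length $R$ of an interval $[a,b]$ containing $0$, $\alpha$ and the range of $u_0$, so neither $\phi^{-1}$ nor time regularity ever enters. You then compare $F_n$ with $S\bigl(\phi(\alpha)\pm C(1-V)\bigr)$ via Lemma~\ref{lem:finite-elliptic-comparison} on the finite set $\Om_n\cap\{V>1-\varepsilon_*\}$. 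There $-\Delta V\ge V+1+K>2-\varepsilon_*$, so the barriers beat the source once $SC(2-\varepsilon_*)>R$, and $C\ge|\phi(\alpha)|/2$ absorbs the killing term. The exterior inequalities hold outside $\Om_n$ because $V\le1$. On the rest of $\Om_n$ they follow from the a priori bounds $S\phi(a)\le F_n\le S\phi(b)$ once $C\varepsilon_*\ge\phi(b)-\phi(a)$.

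Your fallback points in this direction, and carried out on the approximants in this way it would close your one-sided case. As written, however, it lacks the bounded-source identity, the localization and the exterior control. Moreover, for $W$ built from the limit $u^\beta$, Lemma~\ref{lem:finite-elliptic-comparison} cannot be used, because the values of $W$ outside $\Om_n$ on $\{V>1-\varepsilon_*\}$ are precisely what is unknown.
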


\begin{proof}
Fix $u_0\in\ellinfty$ and $\phi\in\mathcal I$.
Let $V$ be as in Lemma~\ref{lem:defect-infinity}, so that
\begin{equation}\label{eq:killing-V-defect}
    \Delta V\le -(V+1)-K.
\end{equation}

Fix $\alpha\in\R$.  Define
\[
    A_\alpha:=\min\{0,\alpha,\inf_Xu_0\},
    \qquad
    B_\alpha:=\max\{0,\alpha,\sup_Xu_0\}
    \qquad \text{and} \qquad
    R_\alpha:=B_\alpha-A_\alpha.
\]
Then,
\[
    \{0, \alpha, u_0(x) \} \subseteq [A_\alpha, B_\alpha] \qquad \text{for all } x \in X.
\]
For this fixed nonlinearity, set
\[
    \beta_\alpha:=\phi(\alpha),
    \qquad
    \underline\beta_\alpha:=\phi(A_\alpha)
    \qquad \text{and} \qquad
    \overline\beta_\alpha:=\phi(B_\alpha).
\]

We now adapt the exhaustion construction of pointwise solutions in~\cite[proof of Theorem~3.4]{bianchi2026bounded} to an arbitrary constant exterior datum $\alpha$. We note that, unlike the construction in \cite{bianchi2026bounded} where $\alpha$ is an extremal exterior value,
we cannot use monotonicity but rather have to use a diagonal argument and compactness
to extract a solution.

\textbf{Step 1: Approximating solutions.}
Let $\Omega_n\uparrow X$ be an exhaustion of $X$ by finite sets.  On $\Omega_n$ consider the finite exterior-Dirichlet problem
\begin{equation}\label{eq:killing-finite-alpha}
    \left(\partial_t+\Delta_{\Omega_n}^{\alpha}\Phi\right) u^\alpha_n=0,
    \qquad
    u^\alpha_n(0,\cdot)=u_0
    \quad\text{on }\Omega_n ,
\end{equation}
which is a finite system of ODEs with continuous right-hand side and thus admits a local solution by Peano's theorem.  Since $A_\alpha\le\alpha\le B_\alpha$, $A_\alpha\le0\le B_\alpha$ and $\phi$ is increasing, the constants $A_\alpha$ and $B_\alpha$ are, respectively, a subsolution and a supersolution of~\eqref{eq:killing-finite-alpha} since, for $x \in \Om_n$, we have
\[
    \Delta_{\Omega_n}^{\alpha}\Phi A_\alpha(x)
    =
    \frac1{\mu(x)}
    \sum_{y\notin\Omega_n}w(x,y)
    \bigl(\phi(A_\alpha)-\phi(\alpha)\bigr)
    +K(x)\phi(A_\alpha)
    \le0,
\]
whereas
\[
    \Delta_{\Omega_n}^{\alpha}\Phi B_\alpha(x)
    =
    \frac1{\mu(x)}
    \sum_{y\notin\Omega_n}w(x,y)
    \bigl(\phi(B_\alpha)-\phi(\alpha)\bigr)
    +K(x)\phi(B_\alpha)
    \ge0.
\]
By the comparison principle of Lemma~\ref{lem:finite-comparison}, every
solution of~\eqref{eq:killing-finite-alpha} therefore remains in the
box $[A_\alpha,B_\alpha]^{\Omega_n}$ for all $t \geq 0$ and, hence, extends to all of
$t\in[0,\infty)$.

Extend $u_n^\alpha$ by the constant $\alpha$ on $X\setminus\Omega_n$
to get $E_{\Om_n}^\alpha u_n^\alpha$.  Set
\[
    C_\alpha:=\max_{s\in[A_\alpha,B_\alpha]}|\phi(s)|.
\]
For each fixed $x\in X$ and all $n$ with $x\in\Omega_n$,
\[
    |\partial_tu_n^\alpha(t,x)|
    \le
    \frac{2C_\alpha}{\mu(x)}\sum_{y\in X}w(x,y)
    +\frac{C_\alpha\kappa(x)}{\mu(x)}
\]
which is independent of $t\ge0$ and $n$.  Thus, at every vertex, the sequence is uniformly
bounded and Lipschitz equicontinuous on each compact time interval.  By
Arzel\`a--Ascoli and a diagonal extraction over the countable family
$X\times\N$, a subsequence converges, uniformly on $[0,m]$ at every
vertex and for every $m\in\N$, to a bounded function
$u^\alpha \colon [0,\infty)\times X\to[A_\alpha,B_\alpha]$.

To see that
$u^\alpha$ solves the \ref{eq:GPME}, fix $x$ and $t\ge0$, choose
$m\in\N$ with $m\ge t$, and pass to the limit in the integrated form
\[
    u^\alpha_n(t,x)=u_0(x)-\int_0^t\Delta\Phi\bigl(E^{\alpha}_{\Omega_n}u^\alpha_n(s,\cdot)\bigr)(x)\dd s
\]
by dominated convergence, first in the weighted sums (with summable dominating function $2C_\alpha w(x,\cdot)/\mu(x)$) and then in time.  In the resulting identity
\[
    u^\alpha(t,x)=u_0(x)-\int_0^t\Delta\Phi\bigl(u^\alpha(s,\cdot)\bigr)(x)\dd s
\]
the integrand is continuous in $s$  by dominated convergence again, so
$u^\alpha$ is a global bounded pointwise solution of the \ref{eq:GPME}
with initial datum $u_0$.

\textbf{Step 2: A localized two-sided barrier for time integrals.}
Fix $S>0$ and set
\[
    F^\alpha_{S,n}(x):=\int_0^S\phi(u^\alpha_n(t,x))\,\dd t
    \qquad \text{and} \qquad
    F^\alpha_S(x):=\int_0^S\phi(u^\alpha(t,x))\,\dd t.
\]
To keep track of the exterior datum $\alpha$, define the full-space function
\[
    \widehat F^\alpha_{S,n}
    :=\int_0^S
    \Phi\bigl(E_{\Omega_n}^{\alpha}u_n^\alpha(t,\cdot)\bigr)\,\dd t.
\]
On $\Omega_n$, this function agrees with $F^\alpha_{S,n}$.  On $X\setminus\Omega_n$, the function $E_{\Omega_n}^{\alpha}u_n^\alpha(t,\cdot)$ has the value $\alpha$ at every time, and hence
\[
    \widehat F^\alpha_{S,n}(x)
    =\int_0^S\phi(\alpha)\,\dd t
    =S\beta_\alpha.
\]
Consequently,
\[
    \widehat F^\alpha_{S,n}
    =E_{\Omega_n}^{S\beta_\alpha}F^\alpha_{S,n}
    \qquad\text{on }X.
\]
Integrating~\eqref{eq:killing-finite-alpha} over $[0,S]$ and interchanging the time integral with $\Delta$ now gives, for every $x\in\Omega_n$,
\begin{align*}
    0
    &=u_n^\alpha(S,x)-u_0(x)
      +\bigl(\pi_{\Omega_n}\Delta\widehat F^\alpha_{S,n}\bigr)(x)\\
    &=u_n^\alpha(S,x)-u_0(x)
      +\Delta_{\Omega_n}^{S\beta_\alpha}F^\alpha_{S,n}(x).
\end{align*}
The interchange is justified by the bound $|\Phi(E_{\Omega_n}^{\alpha}u_n^\alpha)|\le C_\alpha$ and the summability of $w(x,\cdot)$. Thus,
\begin{equation}\label{eq:killing-integrated-alpha}
    \Delta_{\Omega_n}^{S\beta_\alpha}F^\alpha_{S,n}(x)
    =
    u_0(x)-u^\alpha_n(S,x).
\end{equation}
Since both $u_0(x)$ and $u_n^\alpha(S,x)$ belong to $[A_\alpha,B_\alpha]$, their difference has absolute value at most $B_\alpha-A_\alpha=R_\alpha$. Therefore,
\begin{equation}\label{eq:killing-source-bound}
    \left|
        \Delta_{\Omega_n}^{S\beta_\alpha}F^\alpha_{S,n}(x)
    \right|
    \le
    R_\alpha
    \qquad \text{for }x\in\Omega_n.
\end{equation}
Finally, the bounds from Step~1 and the monotonicity of $\phi$ imply
\[
    \underline\beta_\alpha
    \le \phi(u_n^\alpha(t,x))
    \le \overline\beta_\alpha
    \qquad\text{for }(t,x)\in[0,S]\times\Omega_n.
\]
Integrating these inequalities over $[0,S]$ yields
\begin{equation}\label{eq:killing-F-bound}
    S\underline\beta_\alpha
    \le
    F^\alpha_{S,n}(x)
    \le
    S\overline\beta_\alpha
    \qquad \text{for } x\in\Omega_n.
\end{equation}

Fix $\varepsilon_*\in(0,1)$ and set
\[
    D_{n,\varepsilon_*}:=
    \Omega_n\cap\{x\in X \mid V(x)>1-\varepsilon_*\}.
\]
We regard $F^\alpha_{S,n}$ as a function on $D_{n,\varepsilon_*}$ with exterior datum
\[
    \eta_{n,\alpha,S}(y)
    :=
    \begin{cases}
        F^\alpha_{S,n}(y) & \text{if } y\in\Omega_n\setminus D_{n,\varepsilon_*}\\
        S\beta_\alpha & \text{if } y\in X\setminus\Omega_n
    \end{cases}
\]
so that
\[
    \Delta_{D_{n,\varepsilon_*}}^{\eta_{n,\alpha,S}}F^\alpha_{S,n}
    =
    \Delta_{\Omega_n}^{S\beta_\alpha}F^\alpha_{S,n}
    \qquad\text{on }D_{n,\varepsilon_*}.
\]

Choose constants $c_{\alpha,S},d_{\alpha,S}>0$, depending on $\alpha$, $S$, and $\varepsilon_*$ but not on $n$, so large that
\begin{equation}\label{eq:killing-cd-source-choice}
    Sc_{\alpha,S}(2-\varepsilon_*)>R_\alpha,
    \qquad
    Sd_{\alpha,S}(2-\varepsilon_*)>R_\alpha,
\end{equation}
\begin{equation}\label{eq:killing-cd-exterior-choice}
    \beta_\alpha-c_{\alpha,S}\varepsilon_*
    \le
    \underline\beta_\alpha,
    \qquad
    \beta_\alpha+d_{\alpha,S}\varepsilon_*
    \ge
    \overline\beta_\alpha,
\end{equation}
and
\begin{equation}\label{eq:killing-cd-kappa-choice}
    c_{\alpha,S}\ge\max\left\{0,\frac{\beta_\alpha}{2}\right\},
    \qquad
    d_{\alpha,S}\ge\max\left\{0,-\frac{\beta_\alpha}{2}\right\}.
\end{equation}
Define the barriers
\[
    Z_\alpha:=\beta_\alpha+c_{\alpha,S}(V-1)
    \qquad \text{and} \qquad
    Y_\alpha:=\beta_\alpha+d_{\alpha,S}(1-V).
\]

We first compare the exterior data.  If $y\in X\setminus\Omega_n$, since $V(y)\le1$, we get
\[
    SZ_\alpha(y)\le S\beta_\alpha=\eta_{n,\alpha,S}(y)\le SY_\alpha(y).
\]
If $y\in\Omega_n\setminus D_{n,\varepsilon_*}$, then $V(y)\le1-\varepsilon_*$ and~\eqref{eq:killing-cd-exterior-choice} together with~\eqref{eq:killing-F-bound} gives
\[
    SZ_\alpha(y)
    \le S\bigl(\beta_\alpha-c_{\alpha,S}\varepsilon_*\bigr)
    \le S\underline\beta_\alpha
    \le \eta_{n,\alpha,S}(y) = F_{S,n}^\alpha(y)
    \le S\overline\beta_\alpha
    \le S\bigl(\beta_\alpha+d_{\alpha,S}\varepsilon_*\bigr)
    \le SY_\alpha(y).
\]
Thus,
\begin{equation}\label{eq:killing-exterior-comparison}
    SZ_\alpha
    \le
    \eta_{n,\alpha,S}
    \le
    SY_\alpha
    \qquad\text{on }X\setminus D_{n,\varepsilon_*}.
\end{equation}

Next we compare Laplacians on $D_{n,\varepsilon_*}$. Recall that constants are not annihilated by $\Delta$ when $\kappa\not=0$, but instead $\Delta c=cK$
for $c\in\R$.  Writing
\[
    Z_\alpha=(\beta_\alpha-c_{\alpha,S})+c_{\alpha,S}V
\]
inequality~\eqref{eq:killing-V-defect} gives, pointwise on $X$,
\begin{align*}
    \Delta Z_\alpha
    &=c_{\alpha,S}\Delta V+(\beta_\alpha-c_{\alpha,S})K                                      \\
    &\le -c_{\alpha,S}(V+1)-c_{\alpha,S}K+(\beta_\alpha-c_{\alpha,S})K                         \\
    &=-c_{\alpha,S}(V+1)+(\beta_\alpha-2c_{\alpha,S})K                                       \\
    &\le -c_{\alpha,S}(V+1)
\end{align*}
where the last inequality follows from $K\ge0$ and~\eqref{eq:killing-cd-kappa-choice}.  On $D_{n,\varepsilon_*}$, we have $V+1>2-\varepsilon_*$, whence, by~\eqref{eq:killing-cd-source-choice} and~\eqref{eq:killing-source-bound},
\[
    \Delta(SZ_\alpha)
    <-Sc_{\alpha,S}(2-\varepsilon_*)
    <-R_\alpha
    \le
    \Delta_{D_{n,\varepsilon_*}}^{\eta_{n,\alpha,S}}F^\alpha_{S,n}
    \qquad\text{on }D_{n,\varepsilon_*}.
\]
Since, on $D_{n,\varepsilon_*}$, $\Delta(SZ_\alpha)$ is the finite-domain Laplacian of $SZ_\alpha|_{D_{n,\varepsilon_*}}$ with exterior datum $SZ_\alpha|_{X\setminus D_{n,\varepsilon_*}}$, the exterior comparison~\eqref{eq:killing-exterior-comparison} and Lemma~\ref{lem:finite-elliptic-comparison} yield
\[
    SZ_\alpha\le F^\alpha_{S,n}
    \qquad\text{on }D_{n,\varepsilon_*}.
\]

Similarly, writing
\[
    Y_\alpha=(\beta_\alpha+d_{\alpha,S})-d_{\alpha,S}V
\]
we obtain
\begin{align*}
    \Delta Y_\alpha
    &=-d_{\alpha,S}\Delta V+(\beta_\alpha+d_{\alpha,S})K                                      \\
    &\ge d_{\alpha,S}(V+1)+d_{\alpha,S}K+(\beta_\alpha+d_{\alpha,S})K                         \\
    &=d_{\alpha,S}(V+1)+(\beta_\alpha+2d_{\alpha,S})K                                       \\
    &\ge d_{\alpha,S}(V+1)
\end{align*}
again by~\eqref{eq:killing-cd-kappa-choice}.  Therefore, on $D_{n,\varepsilon_*}$,
by \eqref{eq:killing-cd-source-choice} and \eqref{eq:killing-source-bound}, we get
\[
    \Delta(SY_\alpha)
    >Sd_{\alpha,S}(2-\varepsilon_*)
    >R_\alpha
    \ge
    \Delta_{D_{n,\varepsilon_*}}^{\eta_{n,\alpha,S}}F^\alpha_{S,n}
\]
and the exterior comparison~\eqref{eq:killing-exterior-comparison} with Lemma~\ref{lem:finite-elliptic-comparison} yields
\[
    F^\alpha_{S,n}\le SY_\alpha
    \qquad\text{on }D_{n,\varepsilon_*}.
\]

Now, fix $x$ with $V(x)>1-\varepsilon_*$.  For all large $n$ along the subsequence of Step~1 we have $x\in D_{n,\varepsilon_*}$ while $F^\alpha_{S,n}(x)\to F^\alpha_S(x)$ by dominated convergence.  Since the barriers do not depend on $n$, we conclude that
\begin{equation}\label{eq:killing-F-trapped-localized}
    S\bigl(\beta_\alpha-c_{\alpha,S}(1-V(x))\bigr)
    \le
    F^\alpha_S(x)
    \le
    S\bigl(\beta_\alpha+d_{\alpha,S}(1-V(x))\bigr)
    \qquad \text{for }
    x\in\{V>1-\varepsilon_*\}.
\end{equation}

\textbf{Step 3: Separation of limits.}
The preceding construction and estimate apply to every $\alpha \in \R$.
Since $\phi\in\mathcal I$, its image $\phi(\R)$ is a nontrivial interval and hence contains infinitely many elements. Choose $\mathcal A\subseteq\R$ containing exactly one representative of each level set $\phi^{-1}(\{\tau\})$, $\tau\in\phi(\R)$. Then
\[
\phi_{\vert\mathcal A}\colon\mathcal A\longrightarrow\phi(\R)
\]
is bijective. In particular, $\mathcal A$ is infinite and
$\beta_{\alpha_1}\ne\beta_{\alpha_2}$ whenever
$\alpha_1\ne\alpha_2$ in $\mathcal A$.

Fix $\alpha_1,\alpha_2\in \mathcal{A},$ $\alpha_1\ne\alpha_2,$ and use the same $S>0$ and
$\varepsilon_*\in(0,1)$ for both parameters $c_{\alpha_k,S}$ and $d_{\alpha_k,S}$.  Since  $\beta_{\alpha_1}\ne\beta_{\alpha_2}$,
we may choose $\varepsilon\in(0,\varepsilon_*)$ so
small that the two intervals
\[
    \bigl[
        \beta_{\alpha_k}-c_{\alpha_k,S}\varepsilon,\;
        \beta_{\alpha_k}+d_{\alpha_k,S}\varepsilon
    \bigr]
    \qquad \text{for }k=1,2,
\]
are disjoint.  Since $\sup_XV=1$, there exists $x\in X$ with $V(x)>1-\varepsilon$ and~\eqref{eq:killing-F-trapped-localized} gives
\[
    \frac1S F^{\alpha_k}_S(x)
    \in
    \bigl[
        \beta_{\alpha_k}-c_{\alpha_k,S}\varepsilon,\;
        \beta_{\alpha_k}+d_{\alpha_k,S}\varepsilon
    \bigr]
    \qquad \text{for }k=1,2.
\]
Hence, $F^{\alpha_1}_S(x)\ne F^{\alpha_2}_S(x)$ and, therefore, the
restrictions of $u^{\alpha_1}$ and $u^{\alpha_2}$ to
$[0,S]\times X$ are distinct.  Since $S>0$ was arbitrary, the family
has the asserted separation property on every finite time interval.
\end{proof}

\section{Parabolic characterization}\label{sec:parabolic-characterization}

Combining the results of the two preceding sections gives the characterization announced in the introduction as Theorem~\ref{thm:mainA}.  Besides uniqueness for arbitrary bounded initial data, it shows that stochastic completeness at infinity can be detected using a single initial datum,
and even using only the zero initial datum. Furthermore, stochastic
completeness at infinity is also characterized by a global bounded comparison principle
which we establish here.

We start with a general lemma concerning the Laplacian applied to the positive
part of a function which will be used in the proof of the comparison principle. In particular,
for a function $f \colon X \to \R$, we let $f_+(x)=\max\{f(x),0\}$
denote the positive part of $f$.

\begin{lemma}\label{lem:positive-part-kato}
If $f\in\mathcal F$, then $f_+\in\mathcal F$ and
\begin{equation*}
    \Delta f_+\le \one_{\{f\ge0\}}\Delta f.
\end{equation*}
Moreover,
\begin{equation*}
    \Delta f_+\le0\qquad\text{on }\{f\le0\}.
\end{equation*}
\end{lemma}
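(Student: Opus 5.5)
The plan is to argue pointwise from the definition of $\Delta$, exploiting $|f_+(y)|\le|f(y)|$ and the monotonicity of $s\mapsto s_+$; no chain rule is needed, only elementary inequalities for the positive part.

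\emph{Membership.} Since $0\le f_+\le|f|$, we have $\sum_y w(x,y)|f_+(y)|\le\sum_y w(x,y)|f(y)|<\infty$ for every $x$. Hence $f_+\in\mathcal F$, and $\Delta f_+(x)$ is given by an absolutely convergent sum.

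\emph{The inequality on $\{f\ge0\}$.} Fix $x$ with $f(x)\ge0$, so that $f_+(x)=f(x)$. For every $y$ we have $f_+(y)\ge f(y)$, hence $f_+(x)-f_+(y)\le f(x)-f(y)$. Multiplying by $w(x,y)\ge0$ and summing, which is legitimate because both series converge absolutely, gives
\[
    \frac1{\mu(x)}\sum_y w(x,y)\bigl(f_+(x)-f_+(y)\bigr)\le\frac1{\mu(x)}\sum_y w(x,y)\bigl(f(x)-f(y)\bigr).
\]
The killing terms coincide, since $\kappa(x)f_+(x)=\kappa(x)f(x)$. Therefore $\Delta f_+(x)\le\Delta f(x)=\one_{\{f\ge0\}}(x)\Delta f(x)$.

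\emph{The inequality on $\{f\le0\}$.} Fix $x$ with $f(x)\le0$, so that $f_+(x)=0$. Then
\[
    \Delta f_+(x)=-\frac1{\mu(x)}\sum_y w(x,y)f_+(y)+0\le0.
\]
This is the \enquote{Moreover} claim. It also gives the first inequality on $\{f<0\}$, where the right-hand side $\one_{\{f\ge0\}}\Delta f$ vanishes. On $\{f=0\}$ the two cases overlap, and the argument of the previous paragraph already applies there, so the first inequality holds everywhere.

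No step is a real obstacle. The only point needing care is to justify splitting and comparing the series termwise, and this follows from the absolute convergence established in the membership step.
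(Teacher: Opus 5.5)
Your proof is correct and follows the paper's argument. The paper merges your two cases into the single scalar inequality $a_+-b_+\le\one_{\{a\ge0\}}(a-b)$, applied termwise with $a=f(x)$, $b=f(y)$, and handles the killing term via $f_+(x)=\one_{\{f(x)>0\}}f(x)$; it proves the second claim by exactly your computation on $\{f\le0\}$.
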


\begin{proof}
Since $0\le f_+\le |f|$, the defining summability condition for $\mathcal F$ shows that $f_+\in\mathcal F$.
For every $a,b\in\R$,
\[
    a_+-b_+\le \one_{\{a\ge0\}}(a-b).
\]
Apply this with $a=f(x)$ and $b=f(y)$, multiply by $w(x,y)$, and sum over $y$
to get the inequality for the summation part of the Laplacian. For the killing term part, we use
$f_+(x)=\one_{\{f(x)>0\}}f(x)$. Combining these gives the first inequality.

If $f(x)\le0$, then
\[
\Delta f_+(x)=-\frac1{\mu(x)}\sum_{y\in X}w(x,y)f_+(y)\le0,
\]
which proves the second inequality.
\end{proof}

We now use our previous results and the lemma above to give a general characterization
of stochastic completeness at infinity in terms of bounded solutions
of the filtration equation.
\begin{theorem}[Characterization of \SCinf{}]\label{thm:killing}
Let \(G\) be a weighted graph.
The following statements are equivalent:
\begin{enumerate}[label=(\roman*)]
    \item[\textup{(i)}]
    \(G\) satisfies \SCinf{}.

    \item[\textup{(ii)}]
    For every \(T>0\), every \(u_0\in\ellinfty\) and every $\phi \in \mathcal{I}$, the~\ref{eq:GPME} with initial datum $u_0$ has a unique bounded pointwise solution on
    \([0,T]\times X\).

    \item[\textup{(iii)}]
    There exist \(T_0>0\), \(u_0\in\ellinfty\) and $\phi \in \mathcal{I}$
    such that the~\ref{eq:GPME} with initial datum $u_0$
    has at most one bounded pointwise solution
    on \([0,T_0]\times X\).

    \item[\textup{(iv)}]
    There exist \(T_0>0\) and $\phi \in \mathcal{I}$
    such that $u=0$ is the only bounded
    pointwise solution to the \ref{eq:GPME} on \([0,T_0]\times X\) with initial
    datum \(u_0=0\).

    \item[\textup{(v)}]
    There exist \(T_0>0\) and $\phi \in \mathcal{I}$
    such that the following global bounded comparison principle
    holds: If \(u,v \colon [0,T_0]\times X\to\R\) are bounded functions such
    that
    \[
        u(\cdot,x),v(\cdot,x)
        \in C([0,T_0])\cap C^1((0,T_0))
        \qquad\text{for every }x\in X
    \]
    and
    \[
        (\partial_t+\Delta\Phi)u
        \leq
        (\partial_t+\Delta\Phi)v
        \qquad\text{on }(0,T_0)\times X
    \]
    with
    \[
        u(0,\cdot)\leq v(0,\cdot),
    \]
    then
    \[
        u\leq v
        \qquad\text{on }[0,T_0]\times X.
    \]

    \item[\textup{(vi)}]
    For every $u_0\in\ellinfty$ and every $\phi\in\mathcal I$, the~\ref{eq:GPME} has a unique global bounded pointwise solution.
\end{enumerate}

If one of the preceding conditions holds, then the comparison principle
in \textup{(v)} holds for every \(T>0\).

If $G$ is not stochastically complete at infinity, then, for every $u_0\in\ellinfty$ and every $\phi\in\mathcal I$, there are infinitely many global bounded pointwise solutions whose restrictions to every $[0,T]\times X$, $T>0$, are pairwise distinct. For $u_0=0$, an infinite family can be chosen of one sign: nonnegative when $\phi$ is nonconstant on $[0,\infty)$, and nonpositive otherwise.
\end{theorem}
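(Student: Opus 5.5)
The equivalences reduce to Theorems~\ref{thm:killing-unique} and~\ref{thm:killing-nonlinear-nonunique}, with some bookkeeping. The chain (i)$\Rightarrow$(ii) is Theorem~\ref{thm:killing-unique} together with existence from Theorem~\ref{thm:extremal}. The implication (ii)$\Rightarrow$(iii) is trivial. For (ii)$\Rightarrow$(vi), existence of a global bounded solution again comes from Theorem~\ref{thm:extremal}. Two global bounded solutions coincide on every $[0,T]\times X$ by (ii), hence globally. For (vi)$\Rightarrow$(iii), fix any $\phi$ and $u_0$. If (iii) failed for every $T_0$, we would get two distinct bounded solutions on some $[0,T_0]\times X$. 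I would rather argue by contraposition: if (i) fails, Theorem~\ref{thm:killing-nonlinear-nonunique} directly yields infinitely many distinct global bounded solutions, which contradicts (vi). Similarly, (iii)$\Rightarrow$(i) and (iv)$\Rightarrow$(i) follow by contraposition from Theorem~\ref{thm:killing-nonlinear-nonunique}. That theorem yields infinitely many solutions, pairwise distinct on every $[0,S]\times X$, for every $u_0$, including $u_0=0$, and every $\phi$. The implication (ii)$\Rightarrow$(iv) is immediate since $u=0$ is a solution.

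For (v), note first that (v)$\Rightarrow$(iii). Indeed, if $u,v$ are two bounded pointwise solutions with the same datum, comparison applied both ways gives $u=v$. For the converse, and for the claim that (v) holds for every $T$, I would prove (i)$\Rightarrow$(v) directly via Lemma~\ref{lem:averaged-modulus}. Set $q:=(u-v)_+$ and $\rho:=(\phi(u)-\phi(v))_+$. Then $q(0,\cdot)=0$, $q$ is bounded, and $q(\cdot,x)$ is locally Lipschitz, hence locally absolutely continuous. With $I$ an interval containing the ranges of $u,v$, Lemma~\ref{lem:modulus} gives $0\le\rho\le\omega(q)$. The key inequality is $\partial_tq+\Delta\rho\le0$ a.e. To get it, apply Lemma~\ref{lem:positive-part-kato} to $f:=\phi(u)-\phi(v)$, giving $\Delta\rho\le\one_{\{f\ge0\}}\Delta f$. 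Where $u>v$, we have $f\ge0$ and $\partial_tq=\partial_t(u-v)$, so $\partial_tq+\Delta\rho\le\partial_t(u-v)+\Delta f\le0$. Where $u<v$, $q$ vanishes near $t$, so $\partial_tq=0$. Moreover $f\le0$ there, so the second part of Lemma~\ref{lem:positive-part-kato} gives $\Delta\rho\le0$. The main obstacle is the set where $u=v$. There $f=0$, so $\Delta\rho\le\one\cdot\Delta f$, but $\partial_t(u-v)$ may not vanish. However, at a.e. such time either the point is isolated in the zero set (a null set) or $\partial_t(u-v)=0$ and $\partial_tq=0$. In both cases, if $\partial_t q=0$ there, one needs $\Delta\rho\le0$ or $\Delta f\le0$. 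Since $f(x)=0$, the Laplacian reduces to $-\mu^{-1}\sum_y w(x,y)\rho(y)\le0$ from the explicit formula, as in the second part of Lemma~\ref{lem:positive-part-kato}. So $\partial_tq+\Delta\rho\le0$ a.e., Lemma~\ref{lem:averaged-modulus} gives $q=0$, and hence $u\le v$ for every $T$.

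Finally, for the sign statement with $u_0=0$, I would reuse the construction in the proof of Theorem~\ref{thm:killing-nonlinear-nonunique}, restricting the exterior values $\alpha$ to $\alpha\ge0$. Then $A_\alpha=0$, and the approximations, hence the limits, are nonnegative. The separation in Step~3 only needs infinitely many distinct values $\phi(\alpha)$ with $\alpha\ge0$, which holds when $\phi$ is nonconstant on $[0,\infty)$. Otherwise $\phi$ must be nonconstant on $(-\infty,0]$, and taking $\alpha\le0$ gives $B_\alpha=0$ and nonpositive solutions.
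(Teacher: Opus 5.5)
Your proposal is correct and follows essentially the paper's proof: the equivalences reduce to Theorems~\ref{thm:killing-unique} and~\ref{thm:killing-nonlinear-nonunique}, (i)$\Rightarrow$(v) applies Lemma~\ref{lem:positive-part-kato} and Lemma~\ref{lem:averaged-modulus} to $q=(u-v)_+$ and $\rho=(\phi(u)-\phi(v))_+$, and the one-sign family for $u_0=0$ comes from restricting the exterior values $\alpha$ to one half-line. The differences are cosmetic: you obtain (iv)$\Rightarrow$(i) directly from the pairwise distinct family instead of rerunning the construction, and your separate treatment of $\{u=v\}$ is handled in the paper in one step by the a.e.\ chain rule $\partial_t q=\one_{\{d>0\}}\partial_t d$ together with $\{d\le0\}\subseteq\{z\le0\}$.
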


\begin{proof}
The implication (i) $\Longrightarrow$ (ii) follows from the
uniqueness given by Theorem~\ref{thm:killing-unique}, together with the
existence given by Theorem~\ref{thm:extremal}. The implication
(ii) $\Longrightarrow$ (iii) is immediate.

Suppose that (i) fails.
Theorem~\ref{thm:killing-nonlinear-nonunique} gives, for every initial
datum and every $\phi \in \mathcal{I}$, infinitely many global bounded pointwise solutions whose
restrictions to each positive finite time interval are pairwise
distinct.  Thus, no
$T_0> 0$, $u_0 \in \ell^\infty(X)$ and $\phi \in \mathcal{I}$
can satisfy the uniqueness
property in (iii), proving
(iii) $\Longrightarrow$ (i).

We next prove the equivalence of (i) and (iv). If (i) holds,
then the uniqueness given by Theorem~\ref{thm:killing-unique}, applied
to the zero initial datum, shows that \(u=0\) is the only bounded
pointwise solution with initial datum $u_0=0$.

Conversely, suppose that \(G\) is not stochastically complete at
infinity and let \(u_0=0\).
For $\phi \in \mathcal{I}$,
choose $\alpha\in\R$ with $\phi(\alpha)\neq 0.$
The sign of $\alpha$ determines the sign of the solution constructed below.
In the proof of Theorem~\ref{thm:killing-nonlinear-nonunique}, one has
$A_\alpha=\min\{0,\alpha\}$ and $B_\alpha=\max\{0,\alpha\}$, so the finite subgraph
construction with exterior datum \(\alpha\) remains in the interval
$[\min\{0,\alpha\},\max\{0,\alpha\}]$. It therefore produces a global bounded
one-sign pointwise solution \(u^\alpha\) satisfying
\[
    \min\{0,\alpha\}\le u^\alpha\le\max\{0,\alpha\}.
\]
The solution associated with \(\alpha=0\) is identically zero, whereas
the separation argument gives
\[
    u^\alpha\not=0
    \qquad\text{whenever }\phi(\alpha)\ne0.
\]
Consequently, (iv) fails for every \(T_0>0\). This proves the
equivalence of \({\rm(i)}\) and \({\rm(iv)}\). Moreover, since the
separation argument distinguishes \(u^{\alpha_1}\) from \(u^{\alpha_2}\)
whenever \(\alpha_1\neq\alpha_2\) and $\phi(\alpha_1)\neq \phi(\alpha_2)$,
at least one of the two half-lines contains infinitely many parameters with distinct $\phi$-values. The corresponding solutions form an infinite nonnegative or nonpositive family with zero initial datum. This proves the final statement of the theorem.

It remains to show the equivalence to the comparison characterization in (v).
Assume (i), fix \(T>0\) and let \(u,v\) satisfy the hypotheses in (v). Set
\[
    d:=u-v,
    \qquad
    z:=\phi(u)-\phi(v),
    \qquad
    q:=d_+
    \qquad \text{and} \qquad
    \rho:=z_+.
\]
Since \(\phi\) is increasing,
\begin{equation}\label{eq:positive_inclusion}
    \{d>0\}\subseteq\{z\geq 0\}.
\end{equation}

Applying Lemma~\ref{lem:positive-part-kato} to $z$ gives
\begin{equation}\label{eq:Kato_consequences}
\Delta \rho\le \one_{\{z\ge0\}}\Delta z  \qquad \text{ and } \qquad \Delta \rho \le0  \text{ on } \{z\le0\}.
\end{equation}

For every fixed \(x\), the scalar positive-part chain rule gives, for
almost every \(t\in(0,T)\),
\[
    \partial_tq(t,x)
    =
    \one_{\{d(t,x)>0\}}\partial_td(t,x).
\]

We claim that for every $x\in X,$
for almost every \(t\in(0,T)\),
$$\partial_tq+\Delta\rho
    \leq0.$$

Using the inclusion of the positive sets given in \eqref{eq:positive_inclusion} as well as
\eqref{eq:Kato_consequences}, and the
assumed differential inequality, we obtain for almost every $t\in \{d(\cdot,x)>0\}
\subseteq \{z \geq 0\},$

\[
\begin{aligned}
    \partial_tq+\Delta\rho
    &\leq
    \one_{\{d>0\}}
    \partial_td+\one_{\{z\geq 0\}}\Delta z=\partial_td+\Delta z
\\
    &=
        (\partial_t+\Delta\Phi) u
        - (\partial_t+\Delta\Phi)v
    \leq0.
\end{aligned}
\]
For almost every $t\in \{d(\cdot,x)\leq 0\},$ by $\{d\leq 0\}\subseteq\{z\leq 0\}$
and \eqref{eq:Kato_consequences},
$$\partial_tq+\Delta\rho\leq 0.$$ This proves the claim.

Moreover,
\(q(0,\cdot)=0\), since \(u(0,\cdot)\leq v(0,\cdot)\) by assumption.

Choose a compact interval \(I\subset\R\) of positive length $l$ containing
the ranges of both \(u\) and \(v\) and let \(\omega \geq 0\) be the strictly increasing concave
modulus supplied by Lemma~\ref{lem:modulus} for \(\phi|_I\). Then,
\begin{equation}\label{eq:comp-modulus}
    0\leq\rho\leq\omega(q)
\end{equation}
since on \(\{d>0\}\) this is the modulus bound~\eqref{eq:modulus-bound}, while
on \(\{d\leq0\}\) one has \(\rho=0\).  In particular,
\(0\leq\rho\leq\omega(l)\), since \(0\leq q\leq l\).
For each $x$, the function $q(\cdot,x)$ is locally absolutely
continuous on $(0,T)$ by the scalar positive-part chain rule, while
$\rho(\cdot,x)$ is continuous.  Hence,
Lemma~\ref{lem:averaged-modulus}, applied with $R=l$, gives
$q=\rho=0$.  In particular, \(u\leq v\). Hence,
(i) $\Longrightarrow$ (v), and the same proof shows that the
comparison principle holds for every \(T>0\).

Conversely, assume (v). If \(u_1,u_2\) are two bounded
pointwise solutions with the same initial datum, then the differential
inequality in (v) holds with equality. Applying the comparison
twice gives
\[
    u_1\leq u_2
    \qquad\text{and}\qquad
    u_2\leq u_1.
\]
Thus, \(u_1=u_2\), so (iii) holds. We have already proved that
(iii) $\Longrightarrow$ (i), completing the equivalence of conditions~\textup{(i)}--\textup{(v)}.

The finite- and global-time multiplicity assertions follow from
Theorem~\ref{thm:killing-nonlinear-nonunique}; the one-sign refinement for
\(u_0=0\) was established in the proof of the equivalence of \textup{(i)} and \textup{(iv)} above.

Finally, if (i) holds, Theorem~\ref{thm:extremal} supplies a
global bounded pointwise solution.  Any two such solutions restrict,
for every \(m\in\N\), to bounded pointwise solutions on
\([0,m]\times X\) and, therefore, coincide there by
Theorem~\ref{thm:killing-unique}.  Hence, the global solution is unique.
Conversely, condition~\textup{(vi)} implies~\textup{(i)}, since failure of~\textup{(i)} gives infinitely many global bounded solutions for every $u_0\in\ellinfty$ and every $\phi\in\mathcal I$ by Theorem~\ref{thm:killing-nonlinear-nonunique}.
This completes the proof of all of the statements.
\end{proof}

\begin{remark}
If $\kappa = 0$, stochastic completeness at infinity is ordinary stochastic completeness by definition.  Thus, Theorem~\ref{thm:killing} contains the parabolic characterization of stochastic completeness, the graph analogue of~\cite{GIM,GIMP}, as the special case $\kappa = 0$.
\end{remark}

\section{Generalized conservation of mass}
\label{sec:mass-characterization}

In this section, we prove that stochastic completeness at infinity is equivalent to a generalized conservation of mass for the filtration equation: the total mass of a positive pointwise solution at time $t$, augmented by the mass dissipated by the killing term up to time $t$, equals the initial mass.
The dissipated mass is measured by the nonlinear flux $\Phi u$ rather than by $u$ itself, and no summability assumption on $\kappa$ is required: stochastic completeness at infinity alone provides the integrability, through a no-flux identity at infinity.

Throughout this section the killing term $\kappa\ge0$ is arbitrary.
As in Section~\ref{sec:nonunique-infinity}, we write
\[
    K:=\frac{\kappa}{\mu}.
\]
We denote by $\langle f,g\rangle:=\sum_{x\in X}f(x)g(x)\mu(x)$
the inner product on $\ell^2(X,\mu).$
For positive functions $f, g\colon X \to [0,\infty)$, we will also
write $\langle f,g\rangle=\sum_{x\in X}f(x)g(x)\mu(x)$ with the understanding
that this could be infinite.
A superscript $+$, as in $\ell^{1,+}(X,\mu)$ and $\ell^{\infty,+}(X)$, indicates the positive elements of these sets.  For a positive function $v$ on $X$, set
\[
    \mathsf M_v:=\sum_{x\in X}v(x)\mu(x)\in[0,\infty]
\]
which we refer to as the \emph{mass} of $v$.
For a positive solution $u$ to the \ref{eq:GPME},
we write $\mathsf M_u(t):=\mathsf M_{u(t,\cdot)}$.

When $\kappa=0$ and $\Phi = \id$, stochastic completeness expresses conservation of heat, and one expects it to be detected by conservation of the mass $\mathsf M_u$ when $u$ is a positive
solution of the heat equation.
When $\kappa\not=0$, the killing term removes mass from the system, so that $\mathsf M_u$ decreases even on the most well-behaved graphs. The relevant question is then no longer whether the mass is conserved, but whether \emph{all} of the dissipated mass is accounted for by the killing term or whether some additional mass is lost at, or created from, infinity.  This question is governed by stochastic completeness at infinity, which we quantify at the nonlinear level by an exact mass balance: on graphs of finite measure within the full class of bounded positive solutions and on graphs of arbitrary measure for finite-mass data.

We first recall the linear picture.  Let $L$ denote the minimal positive self-adjoint realization
of $\Delta$ in $\ell^2(X,\mu)$ associated with the regular Dirichlet form of the graph, see~\cite[Chapter~1]{KLW21}, and let
\[
    P_t:=e^{-tL}
    \qquad \text{and} \qquad
    R_\beta:=(\beta+L)^{-1}
    \qquad \text{for } t,\beta>0
\]
be the minimal heat semigroup and its resolvent; we use the same symbols for their consistent sub-Markovian extensions to $\ell^1(X,\mu)$ and $\ellinfty$.  Following Keller--Lenz \cite{KL12}, stochastic completeness at infinity \SCinf{} is a generalized conservation property given by
\begin{equation}\label{eq:MS-generalized-conservation}
    P_t 1+\int_0^tP_sK\dd s=1
    \qquad \text{for } t>0
\end{equation}
where the second summand is understood in the extended positive sense.
The resolvent counterpart to \eqref{eq:MS-generalized-conservation} is
\begin{equation}\label{eq:generalized-resolvent-conservation}
    1=\beta R_\beta1+R_\beta K
    \qquad \text{for } \beta>0
\end{equation}
again in the extended positive sense; the precise exhaustion--truncation construction of
$R_\beta 1$ and $R_\beta K$, which is needed when $\mu(X)=\infty$, is recalled at the beginning of the proof of Lemma~\ref{lem:no-flux-killing}.  By general theory, \eqref{eq:MS-generalized-conservation}, \eqref{eq:generalized-resolvent-conservation}, and the conditions in Proposition~\ref{prop:SCinf-characterizations} are equivalent.

We shall repeatedly use the standard properties of the resolvent $R_\beta$
collected in \cite[Section~2.1 and the discussion preceding Theorem~7.2]{KLW21}.
On $\ell^2(X,\mu)$ the operator $R_\beta$ is positivity preserving and
self-adjoint, its extensions to the spaces $\ell^p(X,\mu)$ are consistent, and
$\beta R_\beta$ is a contraction on each $\ell^p(X,\mu)$, $1\le p\le\infty$.
If $q\in\ell^2(X,\mu)$, then $R_\beta q\in\dom(L)$ and $(\beta+L)R_\beta q=q$.
The action of $R_\beta$ on arbitrary nonnegative functions is defined via the
monotone extension of \cite[Section~7.2]{KLW21}; in particular, if
$q_n\uparrow q$ pointwise, then $R_\beta q_n\uparrow R_\beta q$ in the extended
positive sense.

For the linear heat equation, i.e., when $\phi(s)=s$,
pairing~\eqref{eq:MS-generalized-conservation} with a positive $f\in\ell^1(X,\mu)\cap\ellinfty$ and using the self-adjointness of $P_t$ together with monotone convergence gives
that stochastic completeness at infinity is equivalent to the mass balance
\[
    \sum_{x\in X}P_tf(x) \mu(x)
    +\int_0^t\sum_{x\in X}\kappa(x)P_sf(x)\dd s
    =\sum_{x\in X}f(x) \mu(x)
\]
which gives that the mass remaining at time $t$, plus the mass removed by the killing term up to time $t$, equals the initial mass. We note that when $u_0$ is positive, $u(t,x)=P_t u_0(x)$
gives the minimal positive pointwise solution of the heat equation in the linear case,
see Lemma~1.24 in \cite{KLW21}.

The nonlinear substitute requires one structural change.  In the filtration equation the killing term acts on $\Phi u$ rather than on $u$: pointwise,
\[
    \partial_tu(t,x)
    =
    -\Delta\Phi u(t,x)
    =
    -\frac1{\mu(x)}\sum_{y\in X}w(x,y)\bigl(\phi(u(t,x))-\phi(u(t,y))\bigr)
    -K(x)\,\phi(u(t,x))
\]
so the instantaneous rate at which the killing term removes mass at the vertex $x$ is $\kappa(x)\phi(u(t,x))$ and not $\kappa(x)u(t,x)$.

This leads to the notion
of generalized mass balance \MB{} for positive solutions $u$ as introduced in Definition~\ref{def:generalized-mass-conservation},
i.e.,
$$\sum_{x\in X}u(t,x)\mu(x) +\int_0^t\sum_{x\in X}\kappa(x)\,\phi(u(s,x))\dd s
=\sum_{x\in X}u_0(x)\mu(x)
    \qquad \text{for all } 0\le t\le T$$
or, in the notation of this section,
$$\mathsf M_u(t) +\int_0^t\sum_{x\in X}\kappa(x)\,\phi(u(s,x))\dd s =\mathsf M_{u_0}.$$

For the statements on graphs of infinite measure, we recall the linear growth condition \LG{}.
Since $\phi\in\mathcal{I},$ condition \LG{} is equivalent to
\begin{equation*}
    C_R:=\sup_{0<r\le R}\frac{\phi(r)}{r}<\infty
    \qquad\text{for every }R>0.
\end{equation*}
Only positive arguments occur below; for signed solutions, the natural replacement is the corresponding two-sided condition $\limsup_{r\to0}|\phi(r)|/|r|<\infty$.  For the signed powers $\phi(r)=r|r|^{m-1}$, condition \LG{} holds exactly when $m\ge1$.  Thus, the finite measure statements below cover all $m>0$, whereas the arbitrary measure statement covering every bounded finite-mass datum includes the linear and porous medium ranges $m\ge1$; the fast diffusion obstruction is exhibited in Example~\ref{ex:sharpness-mass-extensions}\,\textup{(a)}.  Finally, the natural class of bounded finite-mass data is
\[
    \mathcal D_+:=\ell^{1,+}(X,\mu)\cap\ellinfty.
\]
If $\mu(X)<\infty$, then $\mathcal D_+=\ell^{\infty,+}(X)$.

The following no-flux identity is the main analytic input.  It allows us to derive the global balance without any summability assumption on $\kappa$: stochastic completeness at infinity itself provides the required integrability.
This extends the Green's formula characterizations of stochastic completeness
at infinity found in~\cite{HKLMS17}, see also Corollary~7.27 in~\cite{KLW21}.

\begin{lemma}
\label{lem:no-flux-killing}
Assume that $G$ satisfies \SCinf{}.  If
\[
    f,\Delta f\in\ell^1(X,\mu)\cap\ellinfty,
\]
then $K \cdot  f\in\ell^1(X,\mu)$ and
\begin{equation}\label{eq:no-flux-killing}
    \sum_{x\in X}\Delta f(x)\mu(x)
    =
    \sum_{x\in X}\kappa(x)f(x)
\end{equation}
where both series converge absolutely.
\end{lemma}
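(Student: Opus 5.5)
The plan is to represent $f$ through the resolvent and then pair the resolvent identity \eqref{eq:generalized-resolvent-conservation} with the ``source'' of $f$. Fix $\beta>0$ (say $\beta=1$) and set
\[
    g:=(\beta+\Delta)f=\beta f+\Delta f .
\]
By hypothesis $g\in\ell^1(X,\mu)\cap\ellinfty\subseteq\ell^2(X,\mu)$.

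\textbf{Step 1: $f=R_\beta g$.} Since $g\in\ell^2(X,\mu)$, the function $h:=R_\beta g$ lies in $\dom(L)$ and satisfies $(\beta+L)h=g$. Because $L$ is a restriction of the formal Laplacian $\Delta$ (standard, \cite[Chapter~1]{KLW21}), we get $(\beta+\Delta)h=g$ pointwise. Moreover $h$ is bounded, since $\beta R_\beta$ is a contraction on $\ellinfty$ and the extensions are consistent. Thus $f-h\in\ellinfty$ satisfies $(\beta+\Delta)(f-h)=0$. By Proposition~\ref{prop:SCinf-characterizations}~(ii), $S_{1/\beta}$ is injective on $\ellinfty$ under \SCinf{}, so $f=R_\beta g$. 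By positivity of $R_\beta$, this also gives the domination $|f|\le R_\beta|g|$.

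\textbf{Step 2: symmetry pairings for positive functions.} We need the two identities
\[
    \langle R_\beta g_\pm,K\rangle=\langle g_\pm,R_\beta K\rangle
    \qquad\text{and}\qquad
    \langle R_\beta g_\pm,1\rangle=\langle g_\pm,R_\beta 1\rangle
\]
in $[0,\infty]$. Approximate $K$ (resp.\ $1$) by $K_n:=\min\{K,n\}\one_{\Omega_n}\in\ell^2(X,\mu)$, and similarly $\one_{\Omega_n}$, along an exhaustion. For each $n$, self-adjointness of $R_\beta$ on $\ell^2$ gives the identity. The monotone extension property $R_\beta K_n\uparrow R_\beta K$, together with monotone convergence on both sides, passes to the limit.

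Combining Step 2 with \eqref{eq:generalized-resolvent-conservation} gives integrability:
\[
    \sum_x\kappa|f|
    \le\langle K,R_\beta|g|\rangle
    =\langle R_\beta K,|g|\rangle
    \le\langle 1,|g|\rangle<\infty ,
\]
so $K f\in\ell^1(X,\mu)$. Likewise $\beta\langle R_\beta 1,|g|\rangle\le\|g\|_1$. All subsequent pairings are therefore finite, which justifies splitting $g=g_+-g_-$ and subtracting.

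\textbf{Step 3: the identity.} Using the pairings of Step 2 and then \eqref{eq:generalized-resolvent-conservation},
\[
    \beta\langle f,1\rangle+\langle f,K\rangle
    =\langle g,\beta R_\beta1+R_\beta K\rangle
    =\langle g,1\rangle
    =\beta\langle f,1\rangle+\langle\Delta f,1\rangle .
\]
Cancelling the finite term $\beta\langle f,1\rangle$ yields \eqref{eq:no-flux-killing}. Absolute convergence of the series holds because $\Delta f\in\ell^1(X,\mu)$ and $Kf\in\ell^1(X,\mu)$.

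\textbf{Main obstacle.} The delicate points are in Steps 1 and 2. In Step 1 one must make sure that $R_\beta g$ genuinely solves the pointwise equation; this uses that $\dom(L)$ is contained in $\mathcal F$ with $L=\Delta$ there. In Step 2 one must handle $K$, which need not lie in any $\ell^p$, through the exhaustion and truncation definition of $R_\beta K$ without losing the symmetry. I would verify the symmetry first for truncations, where it is plain $\ell^2$ self-adjointness, and only then invoke monotone convergence.
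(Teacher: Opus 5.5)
Your proposal is correct and follows essentially the same route as the paper's proof: you write $f=R_\beta g$ with $g=(\beta+\Delta)f$ via bounded injectivity of $\beta+\Delta$, bound $\langle K,|f|\rangle\le\langle R_\beta K,|g|\rangle\le\|g\|_1$ using the truncations $K_n$ and self-adjointness, then pair $1=\beta R_\beta1+R_\beta K$ with $g$ and cancel the $\beta\langle f,1\rangle$ terms. The only difference is technical: you split $g=g_+-g_-$ and pass to the limit by monotone convergence, whereas the paper uses dominated convergence with dominating functions $|g|$, $|f|$ and $|f|K$; both justifications work.
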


\begin{proof}
Fix $\beta>0$.  Stochastic completeness at infinity is equivalent to the generalized resolvent-conservation identity
\begin{equation}\label{eq:extended-resolvent-conservation}
    1=\beta R_\beta 1+R_\beta K
\end{equation}
which, in particular, implies the inequalities
\begin{equation}\label{eq:extended-resolvent-inequalities}
    0\le R_\beta K\le 1
    \end{equation}
where all terms are understood in the extended positive sense.  More precisely, let $(\Om_n)$ be an exhaustion of $X$ by finite sets and define
\[
    R_\beta 1:=\lim_{n\to\infty}R_\beta\one_{\Om_n}.
\]
We note that this limit is independent of the exhaustion by a maximum principle
as in Proposition~1.20 in \cite{KLW21}.  If
\[
    K_n:=(K\wedge n)\,\one_{\Om_n},
\]
then $K_n\in\ell^1(X,\mu)\cap\ell^2(X,\mu)\cap\ellinfty$, the functions $R_\beta K_n$ increase pointwise to $R_\beta K$, and~\eqref{eq:extended-resolvent-conservation} holds pointwise.  This is the resolvent form of the generalized conservation characterization in~\cite[Theorem~7.2]{KLW21} and~\cite{KL12}; the spatial cutoff in the definition of $K_n$ is needed when $\mu(X)=\infty$.

Set
\[
    g:=(\beta +\Delta) f.
\]
Then, by the assumptions on $f$ and $\Delta f$, $g\in\ell^1(X,\mu)\cap\ellinfty\subseteq\ell^2(X,\mu)$.  The function $v:=R_\beta g$ belongs to $\ell^1(X,\mu)\cap\ellinfty$ because $\beta R_\beta$ is sub-Markovian and contracts $\ell^1(X,\mu)$. Furthermore, $v$ satisfies
\[
    (\beta+\Delta)v=g
\]
pointwise on $X$, since the generator $L$ is a restriction of the formal Laplacian $\Delta$, see~\cite[Theorem~1.6]{KLW21}.  Consequently, $h:=f-v$ is bounded and satisfies $(\beta+\Delta)h=0$.  The injectivity of $\beta+\Delta$ on $\ellinfty$, which is equivalent to \SCinf{}, see Proposition~\ref{prop:SCinf-characterizations}, gives $h=0$, that is,
\begin{equation}\label{eq:f-resolvent-g}
    f=R_\beta g.
\end{equation}

Positivity of the resolvent and~\eqref{eq:f-resolvent-g} yield
\[
    |f|\le R_\beta|g|.
\]
Using the truncations $K_n$, the self-adjointness of $R_\beta$ on $\ell^2(X,\mu)$ and monotone convergence, we obtain
\begin{align*}
    \displaystyle\sum_{x\in X}\kappa(x)|f(x)| \mu(x)
    &=\langle K,|f|\rangle
    \le\langle K,R_\beta|g|\rangle
    =\lim_{n\to\infty}\langle K_n,R_\beta|g|\rangle \\
    &=\lim_{n\to\infty}\langle R_\beta K_n,|g|\rangle
    \le\|g\|_1<\infty,
\end{align*}
since $0\le R_\beta K_n\le R_\beta K\le 1$ by~\eqref{eq:extended-resolvent-inequalities}.  Thus, $K \cdot f\in\ell^1(X,\mu)$.

It remains to identify the total flux.  We pair~\eqref{eq:extended-resolvent-conservation} with $g$: since $\beta R_\beta 1$ and $R_\beta K$ take values in $[0,1]$ and
$g\in\ell^1(X,\mu)$, each pairing converges absolutely and
\begin{align*}
    \langle g,1\rangle
    &=\beta\langle g,R_\beta 1\rangle+\langle g,R_\beta K\rangle \\
    &=\beta\langle R_\beta g,1\rangle+\langle R_\beta g,K\rangle \\
    &=\beta\langle f,1\rangle+\langle f,K\rangle .
\end{align*}
Here, the first self-adjointness identity is justified by replacing $1$ with $\one_{\Om_n}$ and passing to the limit by dominated convergence, with dominating functions $|g|$ and $|f|\in\ell^1(X,\mu)$; the second uses the truncations $K_n$ in the same way, now with dominating functions $|g|$ and $|f|K\in\ell^1(X,\mu)$, the integrability established above.  Since $g=(\beta +\Delta) f$, the terms containing $\beta$ cancel, and we are left with
\[
    \langle\Delta f,1\rangle=\langle f,K\rangle
\]
which is precisely~\eqref{eq:no-flux-killing}.  The hypotheses give absolute convergence of the left-hand side and the equivalence above gives it for the right-hand side.
\end{proof}

We next state a general consequence of the generalized mass balance for positive
solutions with initial condition in $\ell^1$. For $T > 0$, we
write $AC([0,T])$ for the absolutely continuous functions on $[0,T]$.

\begin{lemma}[Differential form of the generalized mass balance]\label{lem:differential-mass-balance}
Let $u$ be a positive pointwise solution of the~\ref{eq:GPME} on $[0,T]\times X$ with initial datum $u_0\in\ell^{1,+}(X,\mu)$. If $u$ satisfies~\MB{}, then
\[
q(t):=\sum_{x\in X}\kappa(x)\phi(u(t,x))\in L^1(0,T),
\]
$\mathsf M_u\in AC([0,T])$, and
\[
\mathsf M_u'(t)=-q(t)
\]
for almost every $t\in(0,T)$.
\end{lemma}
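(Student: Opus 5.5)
The plan is to read everything off \MB{} itself, using only that $q\ge0$ and that the solution is positive. Since $u\ge0$ and $\phi\in\mathcal I$ is increasing with $\phi(0)=0$, we have $\phi(u(s,x))\ge0$, so $q(s)\in[0,\infty]$. Moreover $s\mapsto q(s)$ is measurable, being a countable sum of nonnegative continuous functions $s\mapsto\kappa(x)\phi(u(s,x))$. By Tonelli,
\[
\int_0^t\sum_{x\in X}\kappa(x)\phi(u(s,x))\dd s=\int_0^t q(s)\dd s,
\]
so this interchange needs no further justification.

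First I establish integrability. Taking $t=T$ in \MB{} and using $\mathsf M_u(T)\ge0$ gives
\[
\int_0^T q(s)\dd s=\mathsf M_{u_0}-\mathsf M_u(T)\le \mathsf M_{u_0}<\infty,
\]
because $u_0\in\ell^{1,+}(X,\mu)$. Hence $q\in L^1(0,T)$, and in particular $q<\infty$ almost everywhere. The same identity at a general $t$ shows $\mathsf M_u(t)\le\mathsf M_{u_0}<\infty$ for all $t\in[0,T]$, so $\mathsf M_u$ is a finite real-valued function.

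Next I rewrite \MB{} as
\[
\mathsf M_u(t)=\mathsf M_{u_0}-\int_0^t q(s)\dd s \qquad \text{for } 0\le t\le T.
\]
This exhibits $\mathsf M_u$ as a constant minus the indefinite integral of an $L^1(0,T)$ function. By the fundamental theorem of calculus for Lebesgue integrals, $\mathsf M_u\in AC([0,T])$ and $\mathsf M_u'(t)=-q(t)$ at every Lebesgue point of $q$, hence for almost every $t\in(0,T)$.

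I do not expect a real obstacle. The only delicate points are measurability of $q$, which follows from pointwise continuity in time together with Tonelli, and the fact that nothing beyond \MB{} and positivity is needed. In particular, no summability of $\kappa$ is assumed, and none is required, because finiteness of the killing integral is forced by the balance identity itself.
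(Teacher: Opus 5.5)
Your proposal is correct and follows the paper's proof essentially line by line. You get measurability of $q$ from the continuous nonnegative summands and integrability from \MB{} at $t=T$, then rewrite \MB{} as $\mathsf M_u(t)=\mathsf M_{u_0}-\int_0^t q(s)\dd s$ and apply the Lebesgue fundamental theorem of calculus.
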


\begin{proof}
The function $q$ is measurable as the increasing limit of continuous finite partial sums. The balance at $t=T$ gives
\[
\int_0^Tq(s)\dd s=\mathsf M_{u_0}-\mathsf M_u(T)\le\mathsf M_{u_0}<\infty.
\]
Consequently,
\[
\mathsf M_u(t)=\mathsf M_{u_0}-\int_0^tq(s)\dd s,
\]
which gives the asserted absolute continuity and derivative identity.
\end{proof}

The next proposition derives some consequences of stochastic completeness
at infinity. In particular, when a solution is additionally continuously in $\ell^1$,
the solution will satisfy the generalized mass balance.

\begin{proposition}
\label{prop:solution-wise-mass-criterion}
Assume that $G$ satisfies \SCinf{}.  Let $u$ be a bounded positive pointwise solution of
the \ref{eq:GPME}
on $[0,T]\times X$ such that $u(t,\cdot)\in\ell^1(X,\mu)$ for every $t\in[0,T]$ and assume that
\begin{equation}\label{eq:solution-wise-integrated-flux}
    F_t:=\int_0^t\Phi u(s,\cdot)\dd s\in\ell^1(X,\mu)
    \qquad \text{for }0\le t\le T.
\end{equation}
Then $u$ satisfies the generalized mass balance~\MB{} and $\mathsf M_u$
satisfies the differential form of the generalized mass balance: $\mathsf M_u\in AC([0,T])$ and
\begin{equation}\label{eq:differential-mass-balance}
 \mathsf M_u'(t)
    =
    -\sum_{x\in X}\kappa(x)\,\phi(u(t,x))
\end{equation}
for almost every $t\in(0,T)$.
\end{proposition}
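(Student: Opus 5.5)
The plan is to integrate the equation in time and then apply the no-flux identity of Lemma~\ref{lem:no-flux-killing} to the time-integrated flux $F_t$. Fix $t\in(0,T]$. First I integrate the pointwise equation $\partial_t u+\Delta\Phi u=0$ over $[\varepsilon,t-\delta]$ and then let $\varepsilon,\delta\downarrow0$. Since $u$ is bounded, $\Phi u$ is bounded by some $C$, so the term $\Delta\Phi u(s,x)$ is bounded in $s$ for each fixed $x$. Continuity of $u(\cdot,x)$ on $[0,T]$ then gives
\[
    u(t,x)-u_0(x)=-\int_0^t\Delta\Phi u(s,\cdot)(x)\dd s=-\Delta F_t(x),
\]
where exchanging $\Delta$ with the time integral is justified by the bound $|\Phi u|\le C$ and the summability of $w(x,\cdot)$, exactly as in the proof of Lemma~\ref{lem:averaged-modulus}. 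Hence $\Delta F_t=u_0-u(t,\cdot)$.

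Next I check the hypotheses of Lemma~\ref{lem:no-flux-killing} for $f=F_t$. By assumption \eqref{eq:solution-wise-integrated-flux}, $F_t\in\ell^1(X,\mu)$, and $0\le F_t\le tC$, so $F_t$ is also bounded. Moreover $\Delta F_t=u_0-u(t,\cdot)$ lies in $\ell^1(X,\mu)\cap\ellinfty$, because $u(t,\cdot)$ and $u_0=u(0,\cdot)$ are bounded and in $\ell^1$. The lemma therefore gives $K F_t\in\ell^1(X,\mu)$ and
\[
    \mathsf M_{u_0}-\mathsf M_u(t)=\sum_{x}\Delta F_t(x)\mu(x)=\sum_x\kappa(x)F_t(x)=\sum_x\kappa(x)\int_0^t\phi(u(s,x))\dd s.
\]
All terms are nonnegative, because $u\ge0$ and $\phi(0)=0$ with $\phi$ increasing force $\phi(u)\ge0$. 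Tonelli then lets me swap the sum and the integral, which yields \MB{} for every $t\in[0,T]$; the case $t=0$ is trivial.

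Finally, $u_0\in\ell^{1,+}(X,\mu)$ and \MB{} holds, so Lemma~\ref{lem:differential-mass-balance} applies directly. It gives $q\in L^1(0,T)$, $\mathsf M_u\in AC([0,T])$, and \eqref{eq:differential-mass-balance} almost everywhere.

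The only delicate step is the time integration near the endpoints, since $u(\cdot,x)$ is $C^1$ only on the open interval $(0,T)$. This is handled by continuity of $u(\cdot,x)$ on $[0,T]$ together with uniform boundedness of $\Delta\Phi u(\cdot,x)$. Beyond that, the result follows from the no-flux identity, which is where \SCinf{} enters.
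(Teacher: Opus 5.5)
Your proposal is correct and follows essentially the same route as the paper. You integrate the equation in time to get $\Delta F_t=u_0-u(t,\cdot)$, check that $F_t,\Delta F_t\in\ell^1(X,\mu)\cap\ellinfty$, apply the no-flux identity of Lemma~\ref{lem:no-flux-killing} with Tonelli's theorem to obtain \MB{}, and conclude with Lemma~\ref{lem:differential-mass-balance}; your explicit handling of the endpoint limits and of $\phi(u)\ge0$ (needed for Tonelli) just spells out details the paper leaves implicit.
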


\begin{proof}
For every fixed $x\in X$, the boundedness of $\phi(u)$ and the
summability of $w(x,\cdot)$ permit us to interchange $\Delta$ with the
time integral.  Integrating the equation first away from the time
endpoints and then using the continuity of $u(\cdot,x)$ gives
\begin{equation}\label{eq:integrated-pointwise-equation}
    \Delta F_t
    =
    \int_0^t\Delta\Phi u(s,\cdot)\dd s
    =
    u_0-u(t,\cdot).
\end{equation}
Thus, $F_t,\Delta F_t\in\ell^1(X,\mu)\cap\ellinfty$.
Summing \eqref{eq:integrated-pointwise-equation},
Lemma~\ref{lem:no-flux-killing} and Tonelli's theorem yield
\[
    \mathsf M_{u_0}-\mathsf M_u(t)
    =\sum_{x\in X}\Delta F_t(x) \mu(x)
    =\sum_{x\in X}\kappa(x)F_t(x)
    =\int_0^t\sum_{x\in X}\kappa(x)\,\phi(u(s,x))\dd s
\]
which establishes \MB{}.
The conclusions for $\mathsf M_u$ now follow directly from Lemma~\ref{lem:differential-mass-balance}.
\end{proof}

We can now state a mass characterization result, which covers both finite- and
infinite-measure graphs. In the infinite measure case, we need an additional
assumption on $\Phi$ as formulated in \LG{}.
Furthermore, we now add the assumption
that $\phi^{-1}(\R_+)\neq\emptyset$ where $\R_+=(0,\infty)$.
This is needed as we consider positive solutions
and we require at least two positive solutions when the graph does not satisfy
stochastic completeness at infinity. The assumption $\phi^{-1}(\R_+)\neq\emptyset$ is
used for the separation of limits argument in Step~3 of the proof of
Theorem~\ref{thm:killing-nonlinear-nonunique}.
We note that, as $\phi$ is assumed to be continuous, as soon as $\phi^{-1}(\R_+)\neq\emptyset$,
$\phi$ will actually take on infinitely many positive values.

\begin{theorem}[Stochastic completeness and mass balance]
\label{thm:generalized-mass-characterization} Let $\phi\in\mathcal{I}$ with $\phi^{-1}(\R_+)\neq\emptyset.$
Assume that either
\begin{equation*}
    \mu(X)<\infty
    \qquad\text{or}\qquad
    \phi\text{ satisfies }\LG{}.
\end{equation*}
The following statements are equivalent:
\begin{enumerate}
    \item[\rm(i)]
    \(G\) satisfies \SCinf{}.

    \item[\rm(ii)]
    For every \(T>0\), every bounded positive pointwise solution \(u\) of the \ref{eq:GPME} with initial datum \(u_0 \in \mathcal D_+\) satisfies \MB{} on \([0,T]\).

    \item[\rm(iii)]
    There exist \(T_0>0\) and a nonzero positive finitely supported
    \(u_*\) such that every bounded positive pointwise solution on
    \([0,T_0]\times X\) with initial datum \(u_*\)
    satisfies~\MB{}.

    \item[\rm(iv)]
    For some \(T_0>0\), every bounded positive pointwise solution on
    \([0,T_0]\times X\) with zero initial datum
    satisfies~\MB{}.
\end{enumerate}

If these conditions hold, then, for every solution \(u\) as in
\textup{(ii)}, the total mass \(\mathsf M_u\) is absolutely continuous
on \([0,T]\) and satisfies~\eqref{eq:differential-mass-balance} for
almost every \(t\in(0,T)\).

If \(G\) is not stochastically complete at infinity, then, for every
\(T>0\) and every \(u_0\in\mathcal D_+\), at least one bounded
positive pointwise solution with initial datum \(u_0\)
violates~\MB{}.  For \(u_0=0\), every
nontrivial bounded positive solution violates \MB{}.

When \(\mu(X)<\infty\), statement \textup{(iii)} may equivalently be
replaced by a constant-data test: There exist \(T_0>0\) and \(c_0 \in \R_+\)
such that every bounded positive solution with \(u_0= c_0\)
satisfies \MB{}.  Moreover, in the incomplete case, the violating
solution may be chosen for every constant datum \(u_0= c\), for
\(c\ge0\).
\end{theorem}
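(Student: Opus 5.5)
The plan is to prove (i)$\Rightarrow$(ii) through Proposition~\ref{prop:solution-wise-mass-criterion}, and to obtain (ii)$\Rightarrow$(iii) and (ii)$\Rightarrow$(iv) directly. The implications (iii)$\Rightarrow$(i) and (iv)$\Rightarrow$(i) will be proved by contraposition, using the exterior-datum solutions $u^\alpha$ of Theorem~\ref{thm:killing-nonlinear-nonunique}.

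\textbf{Step 1: (i)$\Rightarrow$(ii).} Fix $u_0\in\mathcal D_+$ and a bounded positive solution $u$. By Theorem~\ref{thm:killing-unique}, $u$ coincides with the minimal positive solution $u^0$ of Theorem~\ref{thm:extremal}, with $0\le u\le\|u_0\|_\infty=:R$. Proposition~\ref{prop:solution-wise-mass-criterion} requires two summability facts: $u(t,\cdot)\in\ell^1$ and $F_t\in\ell^1$.

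If $\mu(X)<\infty$, both facts are trivial by boundedness.

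Under \LG{}, we have $\phi(u)\le C_Ru$, so $F_t\le C_R\int_0^tu$, and it suffices to bound $\mathsf M_u(t)$ uniformly in $t$. For this I would use the finite exhaustion from \cite{bianchi2026bounded} with exterior datum $0$. On each $\Omega_n$ the approximants $u_n\ge0$ satisfy
\[
\tfrac{d}{dt}\sum_{\Omega_n}u_n\mu=-\sum_{x\in\Omega_n}\Bigl(\sum_{y\notin\Omega_n}w(x,y)+\kappa(x)\Bigr)\phi(u_n)\le0,
\]
so $\mathsf M_{u_n}(t)\le\mathsf M_{u_0}$. The approximants $u_n$ increase to the minimal solution, so Fatou gives $\mathsf M_u(t)\le\mathsf M_{u_0}$. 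Hence $F_t\le C_RT\mathsf M_{u_0}$, and Proposition~\ref{prop:solution-wise-mass-criterion} yields \MB{} together with the differential form.

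\textbf{Step 2: incomplete case.} Suppose \SCinf{} fails. Choose $\alpha>0$ with $\phi(\alpha)>0$ and $\alpha\ge\|u_0\|_\infty$; this is possible since $\phi$ is increasing and $\phi^{-1}(\R_+)\ne\emptyset$. The solution $u^\alpha$ of Theorem~\ref{thm:killing-nonlinear-nonunique} lies in $[0,\alpha]$ and is positive. By \eqref{eq:killing-F-trapped-localized}, for $V(x)>1-\varepsilon$ we get
\[
F^\alpha_S(x)\ge S\bigl(\phi(\alpha)-c(1-V(x))\bigr)\ge S\phi(\alpha)/2
\]
on a set of points $x$ where $V$ approaches $1$.

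\begin{itemize}
\item[(a)] If $\mu(X)<\infty$, compare $u^\alpha$ with the minimal solution $u^0$. Both are bounded. The minimal solution satisfies \MB{} with inequality $\le$, by the Fatou argument of Step~1 applied to the killing integral as well. Integrating the equation shows $\Delta(F^\alpha_t-F^0_t)=u^0-u^\alpha$. If both solutions satisfied \MB{}, then $\sum(u^\alpha-u^0)\mu=-\sum\kappa(F^\alpha_t-F^0_t)$ would have to hold. I would instead exhibit failure directly: for $u_0=0$, \MB{} would force $\mathsf M_{u^\alpha}(t)+\int\sum\kappa\phi(u^\alpha)=0$, hence $u^\alpha=0$, contradicting $F^\alpha_S\ne0$. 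The same computation gives the claim that every nontrivial positive solution with zero datum violates \MB{}, and this proves (iv)$\Rightarrow$(i).
\item[(b)] For general $u_0\in\mathcal D_+$, or for finitely supported $u_*$, when $\mu(X)=\infty$ the solution $u^\alpha$ has infinite mass and so violates \MB{} trivially. Choosing instead $\alpha$ small does not help, because $u^\alpha\to\alpha$ "at infinity" makes the mass infinite. So the violating solution is $u^\alpha$ whenever $\mu(X)=\infty$. When $\mu(X)<\infty$, I would pick two values $\alpha_1\ne\alpha_2$ with $\phi(\alpha_1)\ne\phi(\alpha_2)$. At least one of $u^{\alpha_1}$, $u^{\alpha_2}$, $u^0$ then breaks \MB{}. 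The reason is that \MB{} for two bounded solutions with the same datum, applied through Lemma~\ref{lem:no-flux-killing}-type pairing against the defect $U$, forces equality of the time integrals near the supremum of $V$, contradicting the separation.
\end{itemize}

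\textbf{Main obstacle.} I expect the hardest step to be (b) in finite measure: showing that a violating solution must exist among the family rather than merely that solutions differ. To handle it, I would first try to prove that \MB{} for a bounded positive solution $v$ with datum $u_0$ forces $v=u^0$. The difference $q=v-u^0\ge0$ satisfies $\partial_tq+\Delta\rho=0$, and \MB{} for both solutions gives $\sum q(t)\mu+\int\sum\kappa\rho=0$, hence $q=0$. This uses only that $u^0$ satisfies \MB{}, which I would establish via the exhaustion limit (mass inequality from Fatou, and the reverse inequality from monotone convergence of the boundary flux to zero in finite measure). The constant-datum variant is handled by the same argument with $\alpha>c$.
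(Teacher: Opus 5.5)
\textbf{There is a genuine gap in the incomplete case for a general datum.} Your Step~1 is the paper's argument and is fine: uniqueness identifies $u$ with the zero-Dirichlet limit $u^0$, the finite dissipation identity gives $\mathsf M_u(t)\le\mathsf M_{u_0}$, and $\mu(X)<\infty$ or \LG{} then gives $F_t\in\ell^1(X,\mu)$ for Proposition~\ref{prop:solution-wise-mass-criterion}. Your zero-datum argument, and hence (iv)$\Rightarrow$(i), is also correct.

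The problem is the case $u_0\in\mathcal D_+$, which you need for (iii)$\Rightarrow$(i) and for the constant-data claims. Two of your steps fail there:
\begin{enumerate}
\item[(1)] Nothing forces $u^\alpha$ to have infinite mass when $\mu(X)=\infty$, because the exterior value $\alpha$ is not attained ``at infinity'' in any pointwise sense. For $\phi=\id$ and $\kappa=0$, linearity gives $u^\alpha=P_tu_0+\alpha(1-P_t1)$. Its mass is finite whenever the explosion probability $1-P_t1$ is summable, for example when an explosive end of finite measure is attached to a stochastically complete part of infinite measure.
\item[(2)] The minimal solution $u^0$ does \emph{not} satisfy \MB{} when \SCinf{} fails, even if $\mu(X)<\infty$. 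It is exactly the solution that loses mass to infinity: for $\phi=\id$ and $\kappa=0$ one has $\mathsf M_{u^0}(t)=\langle u_0,P_t1\rangle<\mathsf M_{u_0}$ for $u_0\neq0$. So the boundary flux in the exhaustion does not tend to zero.
\end{enumerate}
In addition, the pairing ``against the defect $U$'' through Lemma~\ref{lem:no-flux-killing} is not available, since that lemma assumes \SCinf{}.

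\textbf{The missing idea.} You never need to identify \emph{which} solution violates \MB{}; an ordering argument is enough, and your own ``$q=v-u^0$'' computation almost contains it.
\begin{itemize}
\item Take $u^0\le u^\alpha$ with the same datum; they are ordered by minimality of $u^0$ in Theorem~\ref{thm:extremal}.
\item They are distinct, $u^\alpha\neq u^0$, by the Step~3 separation with exterior values $0$ and $\alpha$, where $\phi(\alpha)>0$.
\item Pick $(t_*,x_*)$ with $u^0(t_*,x_*)<u^\alpha(t_*,x_*)$.
\item If either solution has infinite mass or infinite killing integral at $t_*$, it already violates \MB{}.
\item Otherwise, monotonicity of $\phi$ gives
$\mathsf M_{u^0}(t_*)+D_0(t_*)<\mathsf M_{u^\alpha}(t_*)+D_\alpha(t_*)$, where $D_0,D_\alpha$ denote the killing integrals. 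So the two sides cannot both equal $\mathsf M_{u_0}$.
\end{itemize}
Equivalently, run your argument as a dichotomy: either $u^0$ already violates \MB{}, or it satisfies \MB{} and then $u^\alpha$ must violate it. This is the paper's argument. It works uniformly in finite and infinite measure, and it also covers constant data $c$ when $\mu(X)<\infty$.
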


\begin{proof}
We divide the proof into two steps.

\medskip
\noindent\textbf{Step 1: \textup{(i)} implies \textup{(ii)}
and the additional statements when (i) holds.}
Assume that (i) holds,
let \(u\) be a bounded positive pointwise solution of the \ref{eq:GPME} on
\([0,T]\times X\) with \(u_0\in\mathcal D_+\), and set
\[
    R:= \sup_{(t, x) \in [0,T] \times X} |u(t,x)|
    \qquad \text{and} \qquad
    \widehat R:=\max\{1,R\}.
\]
We first prove that
\begin{equation}\label{eq:pointwise-l1-bound}
    \|u(t,\cdot)\|_1\le\|u_0\|_1
    \qquad \text{for }0\le t\le T.
\end{equation}
As we assume \SCinf{}, we note that the \ref{eq:GPME} has a unique
bounded solution for bounded initial data by Theorem~\ref{thm:killing-unique}.
Let \((\Omega_n)\) be an exhaustion
of \(X\) and let \(u_n\) solve the finite \ref{eq:GPME} problem on \(\Omega_n\), with
initial value \(u_0|_{\Omega_n}\) and zero exterior value; denote its
extension by zero as \(U_n\).  The comparison principle of
Lemma~\ref{lem:finite-comparison} gives
\[
    0\le U_n\le U_{n+1}\le R_0:=\|u_0\|_\infty .
\]
The zero-Dirichlet exhaustion construction of
Theorem~\ref{thm:extremal}, with \(A=0\), therefore produces the minimal
positive bounded solution \(u^0=\lim_nU_n\) and the uniqueness of
bounded pointwise solutions under stochastic completeness at infinity,
Theorem~\ref{thm:killing-unique}, gives
\[
    u=u^0 .
\]

Summing the finite equation over \(\Omega_n\), the internal edge terms
cancel by the symmetry of \(w\) and \(\phi(0)=0\), whence
\begin{equation}\label{eq:finite-mass-dissipation}
    \partial_t
       \sum_{x\in\Omega_n}u_n(t,x)\mu(x)
    =-\sum_{\substack{x\in\Omega_n\\y\notin\Omega_n}}
       w(x,y)\,\phi(u_n(t,x))
      -\sum_{x\in\Omega_n}\kappa(x)\,\phi(u_n(t,x))
    \le0.
\end{equation}
Thus,
\[
    \sum_{x\in X}U_n(t,x)\mu(x)
    \le\sum_{x\in\Omega_n}u_0(x)\mu(x)
    \le\|u_0\|_1
\]
and monotone convergence proves~\eqref{eq:pointwise-l1-bound}.

For \(t\in[0,T]\), define
\[
    F_t(x):=\int_0^t\phi(u(s,x))\dd s.
\]
Then, \(0\le F_t\le t\,\phi(R)\).  If \(\mu(X)<\infty\), this boundedness
implies \(F_t\in\ell^1(X,\mu)\), without any growth assumption on
\(\phi\).

If \(\mu(X)=\infty\), the \LG{} condition, which reads
as $C_{\widehat{R}}:=\sup_{0<r\le \widehat{R}} {\phi(r)}/{r}<\infty$
and \(0\le u\le R\le\widehat R\) give
\(\phi(u)\le C_{\widehat R}u\).  Hence,
Tonelli's theorem and \eqref{eq:pointwise-l1-bound} yield
\begin{equation}\label{eq:Ft-L1-LG}
    \|F_t\|_1
    \le C_{\widehat R}\int_0^t\|u(s,\cdot)\|_1\dd s
    \le C_{\widehat R}\,t\,\|u_0\|_1 .
\end{equation}
Therefore, \(F_t\in\ell^1(X,\mu)\cap\ellinfty\) in either case.

All the hypotheses of
Proposition~\ref{prop:solution-wise-mass-criterion} are therefore
satisfied which establishes~\MB{}, together with the asserted
absolute continuity and differential conclusions.  Since the killing
integral in~\MB{} is non-negative, the balance also
proves~\eqref{eq:pointwise-l1-bound} in the finite-measure case.  This
completes the proof of \textup{(ii)}.

\medskip
\noindent\textbf{Step 2: The converse implications
and additional statements when (i) does not hold.}
The implications \textup{(ii)} \(\Longrightarrow\) \textup{(iii)} and
\textup{(ii)} \(\Longrightarrow\) \textup{(iv)} are immediate.  We now prove
(iii) $\Longrightarrow$ (i) and (iv) $\Longrightarrow$ (i) by contraposition.

Assume that \(G\) is not stochastically complete at infinity and fix
\(T>0\) and \(u_0\in\mathcal D_+\).  In the finite subgraph construction of
the bounded solution with initial datum $u_0$
used in the proof of Theorem~\ref{thm:killing-nonlinear-nonunique},
choose two constant exterior values
\[
    0=\alpha_1<\alpha_2 \text{ such that } \phi(\alpha_2)>0
\]
where we use the assumption $\phi^{-1}(\R_+)\neq\emptyset.$
By the comparison principle of Lemma~\ref{lem:finite-comparison}, the
finite subgraph solutions are ordered, and a common diagonal extraction
produces two bounded positive pointwise solutions with initial datum
\(u_0\) satisfying
\[
    0\le u_1\le u_2
    \qquad\text{on }[0,T]\times X.
\]
Since \(0=\phi(\alpha_1)\neq\phi(\alpha_2)\), the separation argument of
Step~3 of that proof gives \(u_1\not= u_2\). Since their initial data
agree, there exist \(t_*\in(0,T]\) and \(x_*\in X\) with
\[
    u_1(t_*,x_*)<u_2(t_*,x_*).
\]
Put
\[
    D_j(t):=\int_0^t\sum_{x\in X}
       \kappa(x)\,\phi\bigl(u_j(s,x)\bigr)\dd s
    \qquad \text{for } j=1,2.
\]
Then, \(D_1\le D_2\), by the monotonicity of \(\phi\).  If one of
\(\mathsf M_{u_j}(t_*)\) or \(D_j(t_*)\) is infinite, then $u_j$
already violates \MB{}, whose right-hand side
\(\mathsf M_{u_0}\) is finite.  Otherwise, strict positivity at \(x_*\)
and \(\mu(x_*)>0\) give
\[
    \mathsf M_{u_1}(t_*)+D_1(t_*)
    <
    \mathsf M_{u_2}(t_*)+D_2(t_*)
\]
so the two quantities cannot both equal the common initial mass
\(\mathsf M_{u_0}\).  Thus, at least one of the two solutions
violates~\MB{}.

This argument applies to any prescribed \(u_0\in\mathcal D_+\), in
particular, to the datum \(u_*\) in \textup{(iii)}.  Hence
\textup{(iii)} implies \textup{(i)}.

If the initial datum is zero and stochastic completeness at infinity fails,
Theorem~\ref{thm:killing}~(iv) supplies a
nontrivial bounded solution \(u\) which can be chosen positive
by the additional assumption $\phi^{-1}(\R_+)\neq\emptyset$.
Hence, since $u(t_0,x_0)>0$ for some $t_0>0$ and $x_0 \in X$, we get
\[
    \mathsf M_u(t_0)
    +\int_0^{t_0}\sum_{x\in X}\kappa(x)\,\phi(u(s,x))\dd s
    >0=\mathsf M_{u_0}
\]
so every nontrivial zero-data solution violates \MB{}.  This
proves \textup{(iv)} \(\Longrightarrow\) \textup{(i)} and all the assertions
in the incomplete case.

If \(\mu(X)<\infty\), every non-negative constant belongs to
\(\mathcal D_+\).  The same contraposition with \(u_0= c>0\) proves
the constant-data version of \textup{(iii)}, and the preceding argument
also covers every \(c\ge0\).
\end{proof}

When \(\kappa =0\), the identity~\MB{} reduces to the conservation of the total mass, \(\mathsf M_u(t)=\mathsf M_{u_0}\), and Theorem~\ref{thm:generalized-mass-characterization} becomes a mass characterization of ordinary stochastic completeness.  On graphs of finite measure the positive constants are then themselves solutions, and stochastic incompleteness produces the stronger, two-sided phenomenon of simultaneous loss and creation of mass from the same constant initial datum.  This refinement does not carry over verbatim to \(\kappa\not=0\): its proof rests on the fact that the positive constants are solutions, which fails in the presence of killing, since
\[
    \Delta\Phi c=K\cdot\phi(c)\not =0
    \qquad\text{for }c>0 .
\]
For arbitrary \(\kappa\ge0\), Theorem~\ref{thm:generalized-mass-characterization} still gives creation of mass from the zero initial datum, and failure of the generalized balance for at least one bounded positive solution issuing from every datum in \(\mathcal D_+\).

We now make explicit the statements about the no-killing and finite measure case.

\begin{corollary}
\label{cor:mass-no-killing}
Let $G$ be a graph with $\kappa=0$ and \(\mu(X)<\infty\).
Let $\phi\in\mathcal{I}$ with $\phi^{-1}(\R_+)\neq\emptyset.$ Then \(G\) satisfies
\SC{} if and only if one, and hence all, of the
statements {\rm(ii)}--{\rm(iv)} of
Theorem~\ref{thm:generalized-mass-characterization} hold, with the
generalized mass identity~\MB{} reducing
to the conservation of mass:
\[
    \mathsf M_u(t)=\mathsf M_{u_0}
    \qquad \text{for } t\in[0,T].
\]

Moreover, if \(G\) does not satisfy \SC{}, then, for every \(T>0\)
and every \(c \in \R_+\) such that $0<\phi(c)<\sup_{\R}\phi,$ there exist bounded positive pointwise solutions
\(u_-\) and \(u_+\), both with initial datum \(u_0= c\), and times
\(t_-,t_+\in(0,T]\) such that
\[
    0\le u_-\le c\le u_+
    \qquad\text{on }[0,T]\times X
\]
and
\[
    \mathsf M_{u_-}(t_-)<c\,\mu(X)<\mathsf M_{u_+}(t_+).
\]
Thus, failure of \SC{} allows for both loss of mass to infinity and
creation of mass from infinity.
\end{corollary}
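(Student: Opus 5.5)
The first assertion is a direct specialization of Theorem~\ref{thm:generalized-mass-characterization}. Since $\mu(X)<\infty$, the alternative hypothesis there is met without \LG{}. With $\kappa=0$, \SCinf{} is \SC{} by Definition~\ref{def:SC-infinity}, and the killing integral in \MB{} vanishes identically, so \MB{} reads $\mathsf M_u(t)=\mathsf M_{u_0}$. The equivalence of \SC{} with (ii)--(iv) then follows verbatim.

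For the two-sided statement, fix $T>0$ and $c>0$ with $0<\phi(c)<\sup_\R\phi$, and assume \SC{} fails. The key observation is that for $\kappa=0$ the constant $c$ is itself a pointwise solution, since $\Delta\Phi c=K\phi(c)=0$. I would run the exterior-datum construction of Theorem~\ref{thm:killing-nonlinear-nonunique}, Step~1, with initial datum $u_0=c$ and two exterior values. Since $\phi(c)>0=\phi(0)$, choose $\alpha_-:=0$, giving $\beta_{\alpha_-}=0<\phi(c)$. Since $\phi(c)<\sup\phi$, choose $\alpha_+>c$ with $\phi(\alpha_+)>\phi(c)$. On each $\Omega_n$, Lemma~\ref{lem:finite-comparison} orders the three finite problems with exterior data $0\le c\le\alpha_+$ and common initial datum $c$; the middle one is solved by the constant $c$. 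So $0\le u_n^{\alpha_-}\le c\le u_n^{\alpha_+}\le\alpha_+$. A common diagonal subsequence, as in the proof of Theorem~\ref{thm:generalized-mass-characterization}, gives bounded positive solutions $u_-:=u^{\alpha_-}$ and $u_+:=u^{\alpha_+}$ with $0\le u_-\le c\le u_+$ on $[0,T]\times X$.

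Next I separate each from the constant. Apply Step~3 of Theorem~\ref{thm:killing-nonlinear-nonunique} to the pairs $(\alpha_-,c)$ and $(c,\alpha_+)$, which have distinct $\phi$-values; the constant solution is exactly $u^{c}$ for exterior datum $c$. This gives points $x_\pm$ with $F^{\alpha_\pm}_T(x_\pm)\ne F^c_T(x_\pm)=T\phi(c)$. Combined with the ordering, this yields $\phi(u_-(t,x_-))<\phi(c)$ on a set of positive measure of times, hence $u_-(t_-,x_-)<c$ for some $t_-\in(0,T]$. Similarly $u_+(t_+,x_+)>c$ for some $t_+\in(0,T]$.

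Finally, compare masses. Since $\mu(X)<\infty$ and $u_\pm$ are bounded, the masses are finite. Pointwise $u_-\le c$ with strict inequality at $x_-$ (and $\mu(x_-)>0$) gives $\mathsf M_{u_-}(t_-)<c\,\mu(X)$. Symmetrically, $\mathsf M_{u_+}(t_+)>c\,\mu(X)$.

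The main obstacle I expect is justifying that the Step~3 separation applies with the constant solution playing the role of $u^{c}$. The barrier estimate \eqref{eq:killing-F-trapped-localized} holds for any limit of the exhaustion scheme, and for $\alpha=c$ the finite solutions are exactly the constant $c$, so this should go through. Care is needed to use the same subsequence for all three exterior values, so that the trapping estimates are available simultaneously.
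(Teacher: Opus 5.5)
Your proposal is correct and follows the paper's proof essentially step by step. The first part is the specialization of Theorem~\ref{thm:generalized-mass-characterization}. The two-sided part uses the ordered exterior values $0<c<\alpha$ with the constant $c$ as the middle finite solution, the Step~3 separation giving $u^0\neq c\neq u^\alpha$, and the strict mass comparison via $\mu(x_\pm)>0$. The common subsequence you worry about is not actually needed for the ordering $u_-\le c\le u_+$, since the middle solution is the constant $c$ for every $n$.
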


\begin{proof}
Since \(\kappa=0\), stochastic completeness at infinity coincides
with stochastic completeness by definition, and the
compensating term in~\MB{} vanishes. The
stated equivalences are therefore the specialization of
Theorem~\ref{thm:generalized-mass-characterization} to the case
\(\kappa=0\).

For the two-sided assertion, suppose that \(G\) is stochastically
incomplete and fix \(T>0\) and \(c>0\). Since \(\kappa=0\), the
constant function \(c\) is a bounded pointwise solution and, for the
exterior value \(\alpha=c\), the finite subgraph
problems~\eqref{eq:killing-finite-alpha} with initial datum \(c\) are
solved by the constant \(c\), which is their unique solution by the
two-sided application of Lemma~\ref{lem:finite-comparison}:
\[
    u_n^c(t,x)= c.
\]
Fix now \(c\) such that $0<\phi(c)<\sup_{\R}\phi$ and choose \(\alpha>c\) such that $\phi(c)<\phi(\alpha)$.
The finite subgraph comparison principle,
applied to the ordered exterior data \(0<c<\alpha\), yields
\[
    0\le u_n^0\le u_n^c=c\le u_n^\alpha\le\alpha
    \qquad\text{on }[0,T]\times\Omega_n,
\]
and passing to the subsequential limits of the construction gives
bounded positive pointwise solutions \(u^0\) and \(u^\alpha\), both
with initial datum \(c\), satisfying
\[
    0\le u^0\le c\le u^\alpha\le\alpha
    \qquad\text{on }[0,T]\times X.
\]
Since \(\phi(0)\), \(\phi(c)\) and \(\phi(\alpha)\) are pairwise
distinct, the separation argument in Step~3 of the proof of
Theorem~\ref{thm:killing-nonlinear-nonunique} distinguishes the limit
solutions associated with distinct exterior values; in particular,
\[
    u^0\not= c
    \qquad \text{and} \qquad
    u^\alpha\not= c.
\]
As all three solutions share the initial datum \(c\), there exist
\(t_-,t_+\in(0,T]\) and \(x_-,x_+\in X\) with
\[
    u^0(t_-,x_-)<c <
    u^\alpha(t_+,x_+).
\]
Because the inequalities \(u^0\le c\le u^\alpha\) hold on all of \(X\)
and \(\mu>0\) at every vertex,
\[
    \mathsf M_{u^0}(t_-)<c\,\mu(X)<\mathsf M_{u^\alpha}(t_+)
\]
and the claim follows with \(u_-:=u^0\) and \(u_+:=u^\alpha\).
\end{proof}

The preceding theorem concerns pointwise solutions bounded on the entire
time interval.  The
following pointwise extension permits unbounded initial data when the
solution is \(\ell^1\)-continuous and bounded on every positive-time interval.

We note that for every $u_0\in\ell^{1,+}(X,\mu)$, the existence of a nonnegative $\ell^1$-mild solution is known; see~\cite[Theorem~4.1]{bianchi2026bounded} and~\cite{bianchi2022generalized}. Under the assumptions below, this canonical mild solution satisfies~\MB{} by~\cite[Theorem~5.5]{bianchi2026bounded}. The following proposition instead concerns any pointwise solution having the stated additional regularity; mild solutions are not automatically pointwise or bounded on positive-time intervals under our standing assumptions.
\begin{proposition}
\label{prop:unbounded-L1-pointwise-mass}
Assume that \(G\) satisfies \SCinf{} and
that either $\mu(X)<\infty$ or $\phi$ satisfies \LG{}.
Let \(u_0\in\ell^{1,+}(X,\mu)\) and
let \(u\) be a positive pointwise solution on \([0,T]\times X\) such
that
\[
    u\in C\bigl([0,T];\ell^1(X,\mu)\bigr),
    \qquad u(0)=u_0,
\]
and
\begin{equation}\label{eq:positive-time-boundedness}
    \sup_{(s,x)\in[\tau,T]\times X}u(s,x)<\infty
    \qquad\text{for every }\tau\in(0,T).
\end{equation}
Then, \(u\) satisfies~\MB{}. In
particular, $\mathsf M_u(t)$ satisfies the
differential form of the generalized mass balance, i.e., $\mathsf M_u \in AC([0,T])$
and $\mathsf M_u'(t)=-\sum_{x\in X}\kappa(x)\phi(u(t,x)).$
\end{proposition}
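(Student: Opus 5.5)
The plan is to apply the bounded theory on shifted time intervals $[\tau,t]$ and then let $\tau\downarrow0$ using the $\ell^1$-continuity. Fix $\tau\in(0,T)$. By \eqref{eq:positive-time-boundedness}, the shifted function $v(s,x):=u(\tau+s,x)$ is a bounded positive pointwise solution on $[0,T-\tau]\times X$. Its initial datum is $u(\tau,\cdot)\in\ell^{1,+}(X,\mu)\cap\ellinfty=\mathcal D_+$. Pointwise regularity at $s=0$ is inherited, since $u(\cdot,x)\in C^1((0,T))$ and hence $v(\cdot,x)\in C([0,T-\tau])\cap C^1((0,T-\tau))$. The endpoint $s=T-\tau$ only requires continuity, which holds.

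Theorem~\ref{thm:generalized-mass-characterization}, implication (i)$\Rightarrow$(ii), applies to $v$ under exactly the hypothesis ``$\mu(X)<\infty$ or \LG{}''. This gives, for $\tau\le t\le T$,
\[
\mathsf M_u(t)+\int_\tau^t\sum_{x\in X}\kappa(x)\phi(u(s,x))\dd s=\mathsf M_u(\tau).
\]
If one prefers to avoid the theorem, Proposition~\ref{prop:solution-wise-mass-criterion} can be used directly. The required bound $F\in\ell^1$ for the time-integrated flux follows from boundedness when $\mu(X)<\infty$, and from $\phi(u)\le C_{\widehat R_\tau}u$ together with $\sup_s\|u(s)\|_1<\infty$ (which holds by $\ell^1$-continuity) under \LG{}.

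Next let $\tau\downarrow0$. Since $u\in C([0,T];\ell^1(X,\mu))$, we have $\mathsf M_u(\tau)=\|u(\tau)\|_1\to\|u_0\|_1=\mathsf M_{u_0}$. The integrand $\sum_x\kappa(x)\phi(u(s,x))$ is nonnegative and measurable, being an increasing limit of continuous finite sums. By monotone convergence, $\int_\tau^t\to\int_0^t$, and the limit is finite because it equals $\mathsf M_{u_0}-\mathsf M_u(t)$. This yields \MB{} on $[0,T]$. The differential form then follows immediately from Lemma~\ref{lem:differential-mass-balance}.

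The main obstacle I anticipate is the endpoint $t=T$ and the applicability of the bounded theorem. The hypothesis \eqref{eq:positive-time-boundedness} is stated only for $\tau<T$, but the sup over $[\tau,T]$ includes $s=T$, so $v$ is bounded on the closed interval and the theorem applies with horizon $T-\tau$. A subtler point is that Theorem~\ref{thm:generalized-mass-characterization}(ii) requires $u_0\in\mathcal D_+$, which for the shifted datum uses $u(\tau)\in\ell^1$; this is supplied by the $\ell^1$-continuity assumption.

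Note that no uniformity of the \LG{} constant $C_{\widehat R_\tau}$ as $\tau\to0$ is needed. The bound is used only at fixed $\tau$, and the passage $\tau\downarrow0$ relies solely on monotone convergence and $\ell^1$-continuity.
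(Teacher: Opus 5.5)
Your proposal is correct and matches the paper's proof: shift to $[\tau,t]$, apply the (i)$\Rightarrow$(ii) argument of Theorem~\ref{thm:generalized-mass-characterization} to the bounded solution with datum $u(\tau,\cdot)\in\mathcal D_+$, then let $\tau\downarrow0$ using $\ell^1$-continuity and monotone convergence, and conclude with Lemma~\ref{lem:differential-mass-balance}. Your optional shortcut through Proposition~\ref{prop:solution-wise-mass-criterion} is also valid; there you bound $\sup_s\|u(s)\|_1$ by $\ell^1$-continuity rather than by the exhaustion-based estimate $\|u(t)\|_1\le\|u_0\|_1$.
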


\begin{proof}
Fix \(0<\tau<t\le T\).  On \([\tau,t]\times X\), $u$ is
bounded by assumption and its initial value \(u(\tau,\cdot)\) lies in
\(\mathcal D_+\), so the proof of (i) implying (ii) in
Theorem~\ref{thm:generalized-mass-characterization}, applied on the
time-shifted interval, gives
\begin{equation}\label{eq:balance-from-tau}
    \mathsf M_u(t)
    +\int_\tau^t\sum_{x\in X}\kappa(x)\,\phi(u(s,x))\dd s
    =\mathsf M_u(\tau).
\end{equation}
As \(\tau\downarrow0\), the \(\ell^1\)-continuity of $u$ gives
\(\mathsf M_u(\tau)\to\mathsf M_{u_0}\), while monotone convergence
applied to the non-negative integral gives the integral from \(0\) to
\(t\).  This proves \MB{}.  The absolute continuity and
differential conclusions now follow from Lemma~\ref{lem:differential-mass-balance}.
\end{proof}

We now show the necessity of the additional assumptions of
$\mu(X)<\infty$ or \LG{} in the preceding results.

\begin{example}[Sharpness of the hypotheses in Theorem~\ref{thm:generalized-mass-characterization}]
\label{ex:sharpness-mass-extensions} \

\smallskip
\noindent\textbf{(a) The growth condition \LG{}
cannot  be dropped in the infinite measure case.}
Let \(X=\N_0\), \(\mu=1\), \(\kappa=0\), and set
\[
    s_n:=\sum_{k=0}^n\frac1{(k+1)^2}
    \qquad \text{and} \qquad
    w(n,n+1):=s_n(n+1)(n+2)
\]
with $w$ symmetric and all other edge weights vanishing.  The birth--death chain criterion for
stochastic completeness (see~\cite[Theorem~9.25]{KLW21}) gives
\[
    \sum_{n=0}^\infty
       \frac{\sum_{k=0}^n\mu(k)}{w(n,n+1)}
    =\sum_{n=0}^\infty\frac1{s_n(n+2)}
    =\infty,
\]
so this graph is stochastically complete.  Take the fast diffusion
power \(m=1/2\) and let
\[
    \phi(r)=r|r|^{-1/2}
    \qquad \text{and} \qquad
    v(n)=\frac1{(n+1)^2}.
\]
Then, \(v\in\ell^1(X,\mu)\cap\ellinfty\), but
\(\Phi v\notin\ell^1(X,\mu)\), and a direct edge computation gives
\[
    \Delta\Phi v=v.
\]
Consequently, letting
\[
    u(t,n)=\Bigl(1-\frac t2\Bigr)_+^{2}v(n)
\]
and noting that $\Delta \Phi u(t,n)=(1-t/2)_+ v(n)$, we obtain that $u$
is a pointwise solution with $u(t, \cdot) \in \mathcal{D}_+$ for all $t \geq 0$ and
\(u_0=v\in\mathcal D_+\), while
\[
    \mathsf M_u(t)
    =\Bigl(1-\frac t2\Bigr)_+^{2}\mathsf M_{u_0}
\]
is not conserved so that \MB{} fails for this example.

We note that \LG{} does not hold.  Thus, on arbitrary measure
graphs, some control of \(\phi(r)/r\) near zero is indispensable for a
statement covering all bounded finite-mass data.

\smallskip
\noindent\textbf{(b) Positive-time boundedness cannot be dropped, even in
finite measure.}
Again take \(X=\N_0\) and \(\kappa=0\), now with
\[
    \mu(n)=2^{-n},
    \qquad
    v(n)=n+1
    \qquad \text{and} \qquad
    \phi(r)=r|r|
\]
and define
\[
    q_n:=\sum_{k=0}^n2^{-k}(k+1)
    \qquad \text{and} \qquad
    w(n,n+1):=\frac{q_n}{2n+3}.
\]
Then, \(\mu(X)=2\), \(q_n\uparrow4\), and the birth--death criterion
again gives stochastic completeness, since
\[
    \sum_{n=0}^\infty
      \frac{\sum_{k=0}^n2^{-k}}{w(n,n+1)}=\infty.
\]
Moreover, \(v\in\ell^1(X,\mu)\) and the identities
\[
    w(n,n+1)\bigl((n+1)^2-(n+2)^2\bigr)=-q_n
    \qquad \text{and} \qquad
    w(n-1,n)\bigl((n+1)^2-n^2\bigr)=q_{n-1}
    \quad \text{for }n\ge1 ,
\]
show that
\[
    \Delta\Phi v=-v.
\]
Hence,
for every \(T<1\), letting
\[
    u(t,n)=\frac{n+1}{1-t}
    \qquad \text{for }0\le t\le T
\]
and noting that $\Delta \Phi u = -\left({1-t}\right)^{-2}v$, we get that $u$
belongs to \(C^1\bigl([0,T];\ell^1(X,\mu)\bigr)\), satisfies the
equation pointwise and in \(\ell^1(X,\mu)\), but creates mass:
\[
    \mathsf M_u(t)=\frac4{1-t}>4=\mathsf M_{u_0}
    \qquad \text{for }t>0 .
\]
Here, the nonlinearity satisfies \LG{} and \(u\) is unbounded at
every time, so neither
Theorem~\ref{thm:generalized-mass-characterization} nor
Proposition~\ref{prop:unbounded-L1-pointwise-mass} applies; being
unbounded, \(u\) also lies outside the class of
Theorem~\ref{thm:killing-unique}, so there is no conflict with
uniqueness.  Thus, the generalized mass balance~\MB{} cannot be extended to
arbitrary unbounded \(\ell^1\)-valued pointwise solutions, even in
finite measure.
\end{example}

We conclude with a discrete counterpart of the measure-change
characterization of Masamune--Schmidt
\cite[Theorem~3.5(b)]{masamune2020generalized}.
In the graph setting, the equivalence below follows immediately from the bounded injectivity
characterization of stochastic completeness at infinity given in
Proposition~\ref{prop:SCinf-characterizations}~(ii).

This result shows that stochastic completeness at
infinity is ordinary stochastic completeness after absorbing the
killing term into the measure.
In particular, every criterion for the stochastic completeness of
killing-free graphs, such as those in terms of volume growth, see \cite{Fol14, HKS20},
or curvature, see \cite{HL17, MW19},
yields, via the passage from \(\widetilde G\) to $G$ in the proposition below,
a criterion for the generalized conservation of mass.

\begin{proposition}
\label{prop:time-change-SCI}
Let \(G=(X,w,\kappa,\mu)\) be a graph. Define
\[
    \widetilde\mu:=\mu+\kappa
    \qquad \text{and} \qquad
    \widetilde G:=(X,w,0,\widetilde\mu).
\]
Then,
\[
    G\text{ satisfies }\SCinf{}
    \quad\Longleftrightarrow\quad
    \widetilde G\text{ satisfies }\SC{}.
\]
Moreover, if \((X,w,0,\mu)\) satisfies \SC{}, then
\((X,w,\kappa,\mu)\) satisfies \SCinf{} for every
killing term \(\kappa\ge0\).
\end{proposition}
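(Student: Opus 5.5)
The plan is to use the bounded injectivity criterion, Proposition~\ref{prop:SCinf-characterizations}~(ii) (equivalently (iii)), for both graphs and to compare the two formal Laplacians directly. Write $\widetilde\Delta$ for the formal Laplacian of $\widetilde G$. For $f\in\mathcal F$, which is the same class for both graphs since $w$ is unchanged,
\[
    \widetilde\Delta f(x)=\frac1{\mu(x)+\kappa(x)}\sum_{y\in X}w(x,y)(f(x)-f(y)),
\]
so that
\[
    (\mu+\kappa)\widetilde\Delta f=\mu\Delta f-\kappa f .
\]
It follows that $\mu(\Delta+1)f=(\mu+\kappa)\widetilde\Delta f+(\mu+\kappa)f=(\mu+\kappa)(\widetilde\Delta+1)f$. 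Hence a bounded $h$ satisfies $(\Delta+1)h=0$ if and only if $(\widetilde\Delta+1)h=0$; the same holds for the inequalities $\le 0$, since $\mu,\mu+\kappa>0$.

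With this identity, injectivity of $\id+\Delta$ on $\ellinfty$ (the case $\lambda=1$ of (ii) for $G$) is equivalent to injectivity of $\id+\widetilde\Delta$ on $\ellinfty$, which is (ii) for $\widetilde G$. Since $\widetilde G$ has no killing, \SCinf{} for $\widetilde G$ is exactly \SC{}, and the claimed equivalence follows. The only point to check is that Proposition~\ref{prop:SCinf-characterizations} applies to $\widetilde G$ as a graph in the standing sense. It does: $\widetilde\mu>0$, $w$ is unchanged, and the killing term is zero. The ``one (equivalently all) $\lambda$'' clause lets us fix $\lambda=1$ on both sides.

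For the final assertion, assume $(X,w,0,\mu)$ satisfies \SC{} and let $\kappa\ge0$. Using (iii) with $\beta=1$, let $0\le h\in\ellinfty$ satisfy $(\Delta+1)h\le0$ for the graph with killing $\kappa$. Since $\kappa h\ge0$, the killing-free Laplacian $\Delta_0$ satisfies
\[
    (\Delta_0+1)h=(\Delta+1)h-\tfrac{\kappa}{\mu}h\le0 .
\]
By (iii) for $(X,w,0,\mu)$ we get $h=0$, so $(X,w,\kappa,\mu)$ satisfies \SCinf{}. Alternatively, one could pass through $\widetilde G$, but the direct subsolution argument avoids any measure-change comparison.

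There is no serious obstacle; the proof is an algebraic identity. The step needing the most care is keeping track of the $1/\mu$ versus $1/\widetilde\mu$ normalizations, so that the constant $\beta=1$ matches on both sides, rather than mistakenly getting different spectral parameters at different vertices.
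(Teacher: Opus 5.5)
Your proof is correct. For the equivalence it is the paper's argument: the paper also multiplies $(1+\Delta)h=0$ and $(1+\widetilde\Delta)h=0$ by $\mu$ and $\widetilde\mu$, respectively. It notes that both become $\sum_{y}w(x,y)(h(x)-h(y))+(\mu(x)+\kappa(x))h(x)=0$, and concludes via Proposition~\ref{prop:SCinf-characterizations}~(ii) with $\lambda=1$.

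The only difference is in the final assertion, where the paper simply cites \cite[Remark~(d) following Theorem~1]{KL12}. Your comparison $(\Delta_0+1)h=(\Delta+1)h-\frac{\kappa}{\mu}h\le0$ for $h\ge0$ replaces that citation correctly and self-containedly. It uses Proposition~\ref{prop:SCinf-characterizations}~(iii) at $\beta=1$, which holds for all $\beta$ on the killing-free graph and is needed for only one $\beta$ on $G$.
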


\begin{proof}
Since \(w\) is unchanged and \(\widetilde\mu>0\),
\(\widetilde G\)  is a graph and bounded functions lie in the domain of
both formal Laplacians. For every \(h\in\ellinfty\) and \(x\in X\),
multiplying through by the respective measures,
\begin{align*}
    \bigl(1+\Delta_{\mu,\kappa}\bigr)h(x)=0
    &\Longleftrightarrow
    \sum_{y\in X}w(x,y)\bigl(h(x)-h(y)\bigr)
    +\bigl(\mu(x)+\kappa(x)\bigr)h(x)=0\\
    &\Longleftrightarrow
    \bigl(1+\Delta_{\widetilde\mu,0}\bigr)h(x)=0.
\end{align*}
By \SCinf{} as characterized in Proposition~\ref{prop:SCinf-characterizations}~(ii)
with \(\lambda=1\), injectivity of
\(1+\Delta_{\mu,\kappa}\) on \(\ellinfty\) is \SCinf{} for \(G\), while
injectivity of \(1+\Delta_{\widetilde\mu,0}\) on \(\ellinfty\) is
\SCinf{} for \(\widetilde G\), which, in the absence of killing, is
\SC{} by definition. This proves the
first assertion.

For the second assertion, see \cite[Remark~(d) following Theorem~1]{KL12}.
\end{proof}

\section*{Acknowledgments}
D.~B.~is supported by the Startup Fund of Sun Yat-sen University.
B.~H.~is supported by NSFC, no.~12371056.
A.~G.~S.~is a member of the GNAMPA-INdAM group {``Equazioni Differenziali e Sistemi Dinamici''}.
R.~K.~W.~is supported by the Simons Foundation, in the form
of a Travel Support for Mathematicians gift, and was supported by the PSC-CUNY, in the form
of a Department Chair CUNY Research Foundation (RF) Account.

\printbibliography

\end{document}